\documentclass[a4paper,12pt,reqno]{amsart}

\usepackage{macros} 
\usepackage{bbm}

\upsymbols{Tor,Isom,Div,Hom,inv,WC,nr,red,GL,SL,Aut,tors,Hom,Inv,Prob}
\upsymbols{Gal,res,im,coker,Frob,rk,Res,Ext,disc,Id,FS,cyc,PGL,cor,Vol,loc,Reg,Cond}
\bbsymbols{A,G,Z,Q,R,F,C,N,E,P}
\scrsymbols{F,C,S,L,M}
\calsymbols{O,T,F,G,C,D,A,M,I,N}
\fraksymbols{p,P,a,b,d,c,m}
\newcommand{\one}{\mathbbm{1}}
\usepackage[margin=1.2in]{geometry}

\newcommand{\sel}[1]{\textup{Sel}_{#1}}

\DeclareMathOperator{\rad}{rad}
\DeclareMathOperator{\re}{Re}
\DeclareMathOperator{\Brun}{Br_{un}}
\DeclareMathOperator{\Br}{Br}
\let \L \relax
\DeclareMathOperator{\L}{L}

\newcommand{\N}{\mathbb{N}}
\newcommand{\Q}{\mathbb{Q}}

\newcommand{\Z}{\mathbb{Z}}

\numberwithin{equation}{section}

\title{Lower bounds for counting \texorpdfstring{$A_4$}{A4}-quartic fields}
\author[D. Loughran]{Daniel Loughran}
\address{Department of Mathematical Sciences \\
University of Bath \\
Claverton Down \
Bath\\ 
BA2 7AY\\
UK.}
% \email{dtl32@bath.ac.uk}
\urladdr{https://sites.google.com/site/danielloughran/}

\author[R. Paterson]{Ross Paterson}
\address{School of Mathematics, School of Mathematics, University of Bristol, Bristol, BS8 1TW, UK, and the Heilbronn Institute for Mathematical Research, Bristol, UK}
\email{rosspatersonmath@gmail.com}
\urladdr{https://ramifiedprime.github.io}
\subjclass[2020]{Primary 11N45; Secondary 11R20, 14D23}

\makeatletter
\hypersetup{
pdftitle={\@title},
pdfauthor={Daniel Loughran, Ross Paterson},
pdfsubject={Number Theory, Malle's conjecture, number fields, stacks},
pdfkeywords={}
}
\makeatother

\setcounter{tocdepth}{1}
\begin{document}

\begin{abstract}
    A conjecture of Malle predicts the quantity of number fields with bounded discriminant of given Galois group. We present a lower bound matching this in the case of quartic fields with Galois group $A_4$.
\end{abstract}

\maketitle

\tableofcontents

\section{Introduction}

For a transitive permutation group $G \subseteq S_n$, a conjecture of Malle \cites{Mal02,Mal04} predicts the asymptotic distribution of degree $n$ number fields of bounded discriminant with Galois group $G$. This conjecture has been resolved completely for $G$ abelian \cite{MR969545}, and for numerous groups of small degree, including for example $G= D_4,S_4,S_5$ \cites{Dihedral, Bha05,Bha10}. However a particularly challenging open case is that of quartic fields with Galois group $A_4$, where Malle's conjecture predicts an asymptotic formula of the shape $X^{1/2} \log X$. Our main result is a correct lower bound for this counting problem.

\begin{theorem} \label{thm:main}  For sufficiently large $X$, we have
    \[\#\set{L/\QQ~:~
    \begin{array}{l}
        L/\QQ\textnormal{ is an }A_4\textnormal{-quartic with } \Delta_L\leq X
    \end{array}}
    \gg X^{1/2} \log X.\]
\end{theorem}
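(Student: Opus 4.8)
The plan is to descend to the cubic resolvent. If $L/\mathbb{Q}$ is an $A_4$-quartic with Galois closure $\tilde L$, then $K:=\tilde L^{V_4}$, the fixed field of the normal Klein four-subgroup of $A_4$, is a cyclic cubic field, $\tilde L/K$ is a $V_4$-extension, and $C_3=\operatorname{Gal}(K/\mathbb{Q})$ acts on $V_4\cong\mathbb{F}_4$ by cyclically permuting the nonzero elements. Kummer theory writes $\tilde L=K(\sqrt a,\sqrt{\sigma a},\sqrt{\sigma^2 a})$ with $a\in K^*/(K^*)^2$, and the requirement that $\operatorname{Gal}(\tilde L/K)$ be $V_4$ with the correct $C_3$-action is precisely that $N_{K/\mathbb{Q}}(a)\in(\mathbb{Q}^*)^2$, equivalently — as $[K:\mathbb{Q}]$ is odd — that $a$ lies in $V_1(K):=\ker\bigl(N_{K/\mathbb{Q}}\colon K^*/(K^*)^2\to\mathbb{Q}^*/(\mathbb{Q}^*)^2\bigr)$, which is exactly the $\mathbb{F}_4$-isotypic component of $K^*/(K^*)^2$ for the $C_3$-action. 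Since $H^2(C_3,\mathbb{F}_4)=0$, conversely every nonzero $a\in V_1(K)$ yields an $A_4$-quartic, and the map $L\mapsto\bigl(K,\ \text{the $C_3$-orbit of }a\bigr)$ is a bijection onto its image, each orbit having size $3$. The conductor–discriminant formula applied to the three-dimensional irreducible representation of $A_4$ (which is induced from the quadratic character of $G_K$ cutting out $K(\sqrt a)/K$) gives $\Delta_L=d_K\cdot N_{K/\mathbb{Q}}(\mathfrak d_{K(\sqrt a)/K})$ with $d_K=f_K^2$, $f_K$ the conductor of $K$. Thus the quantity in the theorem equals $\tfrac13\sum_K\#\{a\in V_1(K)\setminus\{0\}:\ d_K\,N_{K/\mathbb{Q}}(\mathfrak d_{K(\sqrt a)/K})\le X\}$.

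Next I would make the ramification explicit: for $a\in V_1(K)$ the norm condition forces every prime ramifying in $K(\sqrt a)/K$ to lie over a rational prime $p$ split in $K$ and to involve exactly two of the three primes above $p$ (tamely when $p$ is odd); imposing in addition the bounded-index local condition that $a$ be a square in $K\otimes\mathbb{Q}_2$ removes all $2$-adic contributions, so that $\Delta_L=(f_K\,m(a))^2$ where $m(a)$ is the product of the odd split primes at which $a$ ramifies. It therefore suffices to show $\sum_K\#\{a\in V_1(K):\ a\text{ a square at }2,\ 0<f_K\,m(a)\le X^{1/2}\}\gg X^{1/2}\log X$. For fixed $K$ one counts $a$ through the exact sequence of $\mathbb{F}_4[C_3]$-modules $0\to(\mathcal{O}_K^*/2)_{V_1}\to V_1(K)\xrightarrow{\operatorname{val}}\bigoplus_{p\text{ split in }K}\mathbb{F}_4\to(\operatorname{Cl}_K/2)_{V_1}\to0$, with finite outer terms ($(\mathcal{O}_K^*/2)_{V_1}\cong\mathbb{F}_4$), in which $\operatorname{val}$ records the ramification divisor of $K(\sqrt a)/K$ — landing, at each split $p$, in the nontrivial $C_3$-isotypic summand $\mathbb{F}_4\subset\mathbb{F}_2[C_3]$, so that the local pattern is trivial or "two of three" (the latter with $3$ choices, contributing $p^2$ to $\Delta_L$). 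Hence $\#\{a\in V_1(K):m(a)\le Y\}$ is governed by the Euler product $\prod_{p\text{ split in }K}(1+3p^{-s})$; since split primes have density $1/3$ and $3\cdot\tfrac13=1$, this has a \emph{simple} pole at $s=1$, of residue $\asymp\tfrac{\phi(f_K)}{f_K}\operatorname{Res}_{s=1}\zeta_K(s)\asymp\tfrac{\phi(f_K)}{f_K}|L(1,\chi_K)|^2$ with absolute implied constants (the factor $\phi(f_K)/f_K=\prod_{p\mid f_K}(1-p^{-1})$ occurring because the ramified primes contribute to $\zeta_K$ but not to the product over split primes, the remaining comparison factors being absolutely bounded). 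Absorbing the finite unit and class-group corrections, one expects $\#\{a\in V_1(K):\ a\text{ a square at }2,\ m(a)\le Y\}\gg\dfrac{|L(1,\chi_K)|^2}{|(\operatorname{Cl}_K/2)_{V_1}|}\cdot\dfrac{\phi(f_K)}{f_K}\cdot Y$ uniformly for $Y\ge f_K^{\varepsilon}$ with absolute implied constant; taking $Y=X^{1/2}/f_K$ this is positive once $f_K\le X^{1/2-\varepsilon}$.

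Summing over the $\asymp 2^{\omega(f)}$ cyclic cubic fields of each conductor $f\le X^{1/2-\varepsilon}$ then yields $\#\{A_4\text{-quartics},\ \Delta_L\le X\}\gg X^{1/2}\sum_{f\le X^{1/2-\varepsilon}}\tfrac{\phi(f)}{f^2}\sum_{f_K=f}\tfrac{|L(1,\chi_K)|^2}{|(\operatorname{Cl}_K/2)_{V_1}|}$, and, granting that the inner average is $\asymp 2^{\omega(f)}$, the outer sum is $\asymp\sum_{f\le X^{1/2}}2^{\omega(f)}\phi(f)/f^2\asymp\prod_{p\equiv1\ (3),\ p\le X^{1/2}}(1+2/p)\asymp\log X$, which gives the theorem. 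I expect the real work to be exactly the uniformity claims above: (i) that the inner count over $a\in V_1(K)$ genuinely is $\gg|L(1,\chi_K)|^2\phi(f_K)Y/f_K$ with no decay in $f_K$ beyond the displayed factor — an effective, uniform-in-$K$ count of quadratic extensions of $K$ by conductor subject to the $V_1$-constraint; (ii) controlling the $2$-part of $\operatorname{Cl}_K$, which obstructs prescribing a given ramification pattern in $K(\sqrt a)/K$ and must be handled on average (or circumvented by allowing $O(\log f_K)$ auxiliary split primes, which perturbs $\Delta_L$ only by $f_K^{o(1)}$); and (iii) the first-moment estimate for $|L(1,\chi_K)|^2$ over cubic characters — here it is convenient that these characters are \emph{complex}, so there is no Siegel-zero obstruction (indeed $|L(1,\chi_K)|\gg 1/\log f_K$ pointwise), although it is the average rather than any pointwise bound that the argument actually needs. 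Conceptually the one essential point is that, in contrast to groups such as $D_4$ or $S_4$, the factor $\log X$ in Malle's prediction for $A_4$ is not carried by any single main term but emerges only upon summing the fibre counts over the infinitely many cyclic cubic resolvent fields, ultimately from the divergence of $\sum_{p\equiv1\ (3)}1/p$.
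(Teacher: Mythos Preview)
Your strategy matches the paper's: fibre over the cubic resolvent, obtain a uniform count of $V_4$-extensions of each cyclic cubic $K$ with leading term $\asymp |L(1,\chi_K)|^2$, then sum over $K$; the second moment of cubic $L$-values you flag in (iii) is exactly the paper's Theorem~\ref{thm:LTAvg}. However, your exact sequence is wrong. The kernel of $\operatorname{val}$ on $V_1(K)$ is not $(\mathcal{O}_K^\times/2)_{V_1}$ but the full $V_1$-part of the $2$-Selmer group, which sits in $0\to(\mathcal{O}_K^\times/2)_{V_1}\to\operatorname{Sel}_2(K)_{V_1}\to\operatorname{Cl}_K[2]_{V_1}\to0$ and has order $4\,|\operatorname{Cl}_K[2]|$ (this is the content of Lemma~\ref{lem:selmercount}). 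Since $|\operatorname{Cl}_K[2]|=|\operatorname{Cl}_K/2\operatorname{Cl}_K|=|(\operatorname{Cl}_K/2)_{V_1}|$, the class-group factors from kernel and cokernel \emph{cancel} in the main term: your fibre lower bound should read $\gg |L(1,\chi_K)|^2\,\phi(f_K)\,Y/f_K$ with no $|(\operatorname{Cl}_K/2)_{V_1}|$ in the denominator. This dissolves the version of (ii) you state and turns (iii) into the clean moment problem, not a moment weighted by inverse class numbers.

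The class group does reappear, but only as a multiplier in the \emph{error} term of the fibre count (it is the number of characters in the orthogonality sum detecting $[\mathfrak a]\in 2\operatorname{Cl}_K$; see Theorem~\ref{thm:idealcounting} and the proof of Theorem~\ref{thm:fixed_cubic}). Here your range $f_K\le X^{1/2-\varepsilon}$ is too large: Brauer--Siegel gives only $|\operatorname{Cl}_K[2]|\ll f_K^{1+\varepsilon}$, and the accumulated error then swamps $X^{1/2}$. The missing idea, which the paper uses, is to restrict to $f_K\le X^{\delta}$ for a \emph{small} fixed $\delta>0$. Over this short range every $|\operatorname{Cl}_K[2]|$ is $\ll X^{O(\delta)}$ and the total error is $o(X^{1/2})$, while the sum of leading constants $\sum_{f_K\le X^\delta}|L(1,\chi_K)|^2\phi(f_K)/f_K^2$ still contributes $\asymp\delta\log X$, because the harmonic-type divergence you identified is already present on any power-of-$X$ range. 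Your auxiliary-split-prime workaround is not needed.
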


The previous best lower bound  $X^{1/2}$ is from 1980 due to Baily \cite{Bai80}*{Thm.~3}.  As for upper bounds, the current record is \cite{BSTTTZ}*{Thm.~1.4}, which gives approximately $X^{0.7785}$. We note that the current best lower bounds for $A_n$ for $n \geq 5$ come nowhere close to the expected size \cite{LLOT}.

Baily achieved his lower bound by fixing a cyclic cubic field $F$ and counting counting $A_4$-quartic fields whose cubic resolvent is equal to $F$. This perspective was taken up further in \cite{MR3215550}, where the Dirichlet series of such fields is calculated using a parametrisation of quartic fields with given cubic resolvent. A key step in our proof is the following lower bound for quartic fields with a given cubic resolvent with, crucially, an effective error term.

\begin{theorem} \label{thm:fixed_cubic} Let $F/\QQ$ be a cyclic cubic field, and $\epsilon>0$. Then for $X\geq 64\Delta_F^{1+1/\epsilon}$ we have
\begin{align*}
    &\#\set{L/\QQ~:~
    \begin{array}{l}
    L/\QQ\textnormal{ is an }A_4\textnormal{-quartic with}\\
    \textnormal{ cubic resolvent }F\textnormal{ and }\Delta_L\leq X
    \end{array} } 
    \\&\geq \frac{\zeta^*_F(1)}{6\Delta_F^{1/2}}c_F X^{1/2} + O_\epsilon\braces{\#\Cl_F[2] \cdot 2^{3\omega(\Delta_F)}\left(\frac{X}{\Delta_F}\right)^{1/4+\epsilon} \log(X)^{79}}.
\end{align*}
    where the implied constant is independent of $F$ and 
    \[c_F=\prod_{p \text{ ramified in } F}\left(1 - \frac{1}{p}\right) \prod_{p \text{ split in } F}\left(1 - \frac{1}{p}\right)^3\left(1 + \frac{3}{p}\right) \prod_{p \text{ inert in } F}\left(1 - \frac{1}{p^3}\right)
    \]
    and $p$ varies over all primes.
\end{theorem}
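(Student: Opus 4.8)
The plan is to feed the Dirichlet series of this counting problem --- essentially the one computed in \cite{MR3215550} from a parametrisation of quartic fields with prescribed cubic resolvent --- into a truncated Perron formula, and to bound the resulting contour integral by field-uniform estimates for the Dedekind zeta function of $F$ and for a short list of Hecke $L$-functions attached to $F$.

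First I would set up the parametrisation, following \cite{MR3215550} or re-deriving it via Kummer theory and Galois cohomology. Writing $\sigma$ for a generator of $\Gal(F/\QQ)\cong C_3$, the $A_4$-quartic fields with cubic resolvent $F$ are parametrised, up to a bounded explicit set of equivalences (the $\langle\sigma\rangle$-action and a sign, which accounts for the constant $1/6$) and up to finitely many boundary cases, by the nontrivial square classes $\alpha$ in the $\sigma$-minus part
\[
\Gamma^- \;=\; \ker\bigl(F^\times/F^{\times 2}\xrightarrow{\,N_{F/\QQ}\,}\QQ^\times/\QQ^{\times 2}\bigr),
\]
on which $\alpha\cdot\sigma\alpha\cdot\sigma^2\alpha$ is automatically a square, so that $F(\sqrt\alpha,\sqrt{\sigma\alpha})/\QQ$ is Galois with group $A_4$ and quartic subfield $L$. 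Two features of this picture are crucial. First, the discriminant is $\Delta_L=\Delta_F\cdot\mathsf q(\alpha)$ with $\mathsf q$ an explicit multiplicative function of the conductor $\mathfrak f_\alpha$ of $F(\sqrt\alpha)/F$, special above $2$ and above the conductor of $F$. Second, membership of $\Gamma^-$ forces $\mathfrak f_\alpha$ to be supported on primes of $F$ split over $\QQ$, but with a multiplicity --- the $\binom{3}{2}=3$ choices of two of the three primes above a split rational prime --- that exactly compensates the density $1/3$ of split primes; this is the mechanism behind the convergent Euler product $c_F$, whose $1/p$-coefficient at split $p$ vanishes identically.

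Assembling this, the generating series is $D_F(s)=\sum_L\Delta_L^{-s}=\Delta_F^{-s}E_F(s)$, where $E_F(s)$ is a short combination of $\zeta_F(2s)$ and Hecke $L$-functions $L_F(2s,\psi)$ with $\psi$ running over quadratic ray class characters of $F$ of conductor dividing a fixed modulus supported above $2\Delta_F$; the number of such $\psi$ is at most $\#\Cl_F[2]\cdot 2^{3\omega(\Delta_F)}$ (a factor $2^3$ at each ramified prime from the local square-class data, times the $2$-rank of the class group). Only the trivial $\psi$ contributes a pole, simple at $s=\tfrac12$ via $\zeta_F(2s)$, whose residue yields after Perron the main term $\tfrac{\zeta^*_F(1)}{6\Delta_F^{1/2}}c_F X^{1/2}$. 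I would apply truncated Perron at height $X$ and shift the contour to $\re s=\tfrac14+\epsilon$, crossing only this pole; on the new line $E_F$ is holomorphic and polynomially bounded, and both the shifted integral and the horizontal segments are $\ll\Delta_F^{-1/4-\epsilon}X^{1/4+\epsilon}$ times a suitable bound for $E_F$ on $\re s=\tfrac14+\epsilon$, which produces the $(X/\Delta_F)^{1/4+\epsilon}$ scaling. The hypothesis $X\ge 64\Delta_F^{1+1/\epsilon}$ serves two purposes: it forces $\Delta_F\le X^{O(\epsilon)}$, so that any polynomial-in-$\Delta_F$ loss incurred in the analytic bounds is absorbed into $(X/\Delta_F)^{\epsilon}$; and it makes $X/\Delta_F$ large enough for the main term to dominate the error and for the finitely many boundary square classes and bad Euler factors to contribute an admissible amount.

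The technical heart, and the main obstacle, is bounding $E_F(\tfrac14+\epsilon+it)$ uniformly in $F$ so that, after integrating against $dt/(1+|t|)$ over $|t|\le X$, the total is $\ll\#\Cl_F[2]\cdot 2^{3\omega(\Delta_F)}\,(\log X)^{O(1)}$. Convexity alone is too weak, since on $\re(2s)=\tfrac12+2\epsilon$ the bound $|L_F(2s,\psi)|\ll(1+|t|)^{3/4+\epsilon}$ makes the $t$-integral grow like a power of $X$; instead I would estimate $E_F$ by the triangle inequality as a sum of at most $\#\Cl_F[2]\cdot 2^{3\omega(\Delta_F)}$ terms $|L_F(2s,\psi)|$ with bounded coefficients, and apply to each a field-uniform mean-square bound $\int_{-X}^{X}|L_F(\tfrac12+2\epsilon+it,\psi)|^{2}\,\tfrac{dt}{1+|t|}\ll_\epsilon\Delta_F^{O(\epsilon)}(\log X)^{O(1)}$ followed by Cauchy--Schwarz; this keeps the power of $\log$ finite (rather than a power of $X$), keeps the dependence on $\#\Cl_F[2]\cdot 2^{3\omega(\Delta_F)}$ linear, and permits the Perron truncation height to be taken as large as $X$ (making the truncation error negligible). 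The explicit exponent $(\log X)^{79}$ then comes out of the bookkeeping in these mean-square estimates for the degree-three $L$-functions $L_F(s,\psi)$ over $\QQ$ together with the $1/(1+|t|)$-weighted contour integral, and is surely not optimal. Making the Phragm\'{e}n--Lindel\"{o}f and mean-square inputs genuinely uniform in the field $F$ --- with the dependence on $\Delta_F$ and on the conductors of the $\psi$ recorded as an explicit polynomial, to be absorbed by the hypothesis --- is the part demanding the most care.
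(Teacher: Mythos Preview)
Your parametrisation step agrees with the paper's (both quote \cite{MR3215550}), including the origin of the factor $\#\Cl_F[2]\cdot 2^{O(\omega(\Delta_F))}$. The gap is in the analytic step.

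The claimed mean-square bound
\[
\int_{-X}^{X}\bigl|L_F\bigl(\tfrac12+2\epsilon+it,\psi\bigr)\bigr|^{2}\,\frac{dt}{1+|t|}\;\ll_\epsilon\;\Delta_F^{O(\epsilon)}(\log X)^{O(1)}
\]
is not available: $L_F(s,\psi)$ has degree $3$ over $\QQ$, and this is Lindel\"of on average in the $t$-aspect. The approximate functional equation together with Montgomery--Vaughan gives only $\int_0^T|L_F(\sigma+it,\psi)|^2\,dt\ll T+(qT^3)^{1-\sigma+o(1)}$, which for $\sigma=\tfrac12+2\epsilon$ leaves $T^{3/2-o(1)}$; the weight $1/(1+|t|)$ reduces this to $X^{1/2-o(1)}$, and after Cauchy--Schwarz the shifted Perron integral is of size $X^{1/2+\epsilon}$, swamping the main term. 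Even when $L_F(s,\psi)$ factors abelianly over $\QQ$, you are asking for the sixth moment of Dirichlet $L$-functions. One could salvage a weaker exponent by shifting only to $\re(2s)>2/3$, where the second moment \emph{is} $\ll T$, but that does not prove the theorem as stated with $(X/\Delta_F)^{1/4+\epsilon}$.

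The paper avoids contour integration entirely. After the same parametrisation it reduces, via the bijection $\fa\mapsto \cN(\fa)\fa^{-1}$, to counting ideals of $\cO_F$ with squarefree norm coprime to $\Delta_F$, class in $2\Cl_F$, and norm $\le(X/64\Delta_F)^{1/2}$; the crude inequality $\cN(4/\fc_\fa^2)\le 64$ is what makes this only a lower bound. The class-group condition is opened by characters of $\Cl_F/2\Cl_F$, and the squarefree-norm condition by the convolution $\mu^2(\cN(\fa))\one_{\Delta_F}(\cN(\fa))=\sum_{\fb\mid\fa}\gamma_{F,\Delta_F}(\fb)$ for an explicit $\QQ$-multiplicative $\gamma_{F,\Delta_F}$. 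The key elementary observation is that $\gamma_{F,\Delta_F}(\fb)\ne0$ forces $\rad(\cN(\fb))^2\mid\cN(\fb)$, so $\sum_{\cN(\fb)\le Y}|\gamma_{F,\Delta_F}(\fb)|\ll 2^{3\omega(\Delta_F)}Y^{1/2}(\log Y)^{71}$ uniformly in $F$. After swapping sums, the inner sum $\sum_{\cN(\fa)\le Y/\cN(\fb)}\chi(\fa)$ is handled by the uniform ideal count of Lowry-Duda--Taniguchi--Thorne for $\chi$ trivial and by Landau's P\'olya--Vinogradov over $F$ for $\chi$ nontrivial. No $L$-function moment enters; the exponent $79=2^3(3^2+1)-1$ comes purely from the combinatorics of bounding $\gamma_{F,\Delta_F}$.
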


In the statement $\zeta^*_F(1)$ denotes the residue of the Dedekind zeta function $\zeta_F$ of $F$ at $s =1$. Our method of proof can be adapted to yield an asymptotic formula, but with a more complicated local Euler factor at $2$.

The next step is to sum over all $F$ with discriminant at most $X$. An obvious challenge is controlling the class group factor in the error term of Theorem~\ref{thm:fixed_cubic}. We tackle this as follows: The expected main term is $X^{1/2}\log X$, so the contribution from the sum over $F$ with discriminant up to $X$ should be $\log X$, and so summing over the subset of $F$ with discriminant up to $X^\epsilon$ should still give the correct lower bound.  In this reduced range the Brauer--Siegel theorem implies that the class group is also $\ll X^\varepsilon$, so will not cause problems in the error term when $\epsilon$ is sufficiently small.  We expect this trick to be applicable for controlling error terms in other lower bounds in Malle's conjecture.

To take care over the sum of leading constants, we need to sum over the residues $\zeta_F^*(1)$. From the perspective of the class number formula, these contain rather complicated arithmetic information which is hard to control pointwise. For example, the best \cite{Lou00}*{Thm.~1} pointwise upper bounds for general number fields $k$ of fixed degree has the shape $O(\log \left(\abs{\Delta_k}\right)^{\deg k - 1})$; in fact GRH only gives $O(\log \log \left(\abs{\Delta_k}\right)^{\deg k})$ \cite{MR3682635}*{(1.1)}. Nonetheless we show that $\zeta_F^*(1)$ has constant average order.

\begin{theorem}\label{thm:LTAvg}
    For every $C>0$ we have
    \[\sum_{\substack{F\textnormal{ cyclic cubic}\\\textnormal{unram. at }3\\\Delta_F\leq X}}\zeta_F^*(1)=\frac{7\pi}{26\sqrt{3}}\alpha X^{1/2} + O_C(X^{1/2}\log(X)^{-C}),\]
    where 
    \[\alpha=\prod_{p\equiv 1\bmod 3} \braces{1-\frac{1}{p}}^2\braces{1 +\frac{2}{p} + \frac{1 +3p^{-1} + p^{-2} - p^{-6}}
    {p^2(1 - p^{-3})^2}}
    \prod_{p\equiv 2\bmod 3}\braces{\frac{1+p^{-3}}{1-p^{-3}}}.\]
\end{theorem}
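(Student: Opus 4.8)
The plan is to convert the sum over fields into a first-moment problem for cubic Dirichlet $L$-functions. Recall that a cyclic cubic field $F$ unramified at $3$ has conductor $q=q_F$ equal to a squarefree product of primes $\equiv 1\bmod 3$, satisfies $\Delta_F=q^2$, and has $\zeta_F(s)=\zeta(s)L(s,\chi)L(s,\bar\chi)$, where $\chi,\bar\chi$ are the two primitive cubic Dirichlet characters of conductor $q$ cutting out $F$; since $\zeta$ has residue $1$ at $s=1$, this gives $\zeta^*_F(1)=L(1,\chi)L(1,\bar\chi)=\lvert L(1,\chi)\rvert^2$. The assignment $F\mapsto\{\chi,\bar\chi\}$ is a two-to-one correspondence onto the primitive cubic Dirichlet characters of conductor coprime to $3$, and $\Delta_F\le X\iff q_\chi\le X^{1/2}$, so the quantity in the theorem equals
\[
\tfrac12\sum_{\substack{\chi\ \text{primitive cubic}\\ (q_\chi,3)=1,\ q_\chi\le X^{1/2}}}\lvert L(1,\chi)\rvert^2,
\]
and it suffices to evaluate this with main term $\asymp X^{1/2}$ and error $O_C\bigl(X^{1/2}(\log X)^{-C}\bigr)$.

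Next one opens up the square. Replacing $L(1,\chi)$ by a Dirichlet polynomial of length $N$ (a suitable power of $X$) via Pólya--Vinogradov and partial summation, or via a smoothed approximate functional equation — clean here since cubic characters are even — and using $\bar\chi=\chi^2$, one reduces to understanding
\[
\sum_{m,n\le N}\frac{1}{mn}\,\Psi(mn^2;X^{1/2}),\qquad
\Psi(a;Y):=\sum_{\substack{\chi\ \text{primitive cubic}\\ (q_\chi,3)=1,\ q_\chi\le Y}}\chi(a).
\]
The structural point is that $\Psi(a;Y)=-1+\sum_{q\le Y}f_a(q)$, where $f_a$ is the multiplicative function supported on squarefrees with prime factors $\equiv 1\bmod 3$ determined by $f_a(p)=2$ if $a$ is a nonzero cube modulo $p$, $f_a(p)=-1$ if $a$ is a nonzero non-cube modulo $p$, and $f_a(p)=0$ if $p\mid a$; its generating Dirichlet series is the Euler product $\prod_{p\equiv 1\bmod 3}(1+f_a(p)p^{-s})$. (Equivalently, one may keep a second variable and work with $\sum_\chi L(w,\chi)L(w,\bar\chi)q_\chi^{-s}$, specialising at $w=1$ at the end.)

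The behaviour of this Euler product, and hence the split into main term and error, is governed by whether $a=mn^2$ is a perfect cube. If it is, then $f_a(p)=2$ for all $p\nmid a$, and $\prod_{p\equiv 1\bmod 3}(1+2p^{-s})$ factors as $\zeta_{\Q(\zeta_3)}(s)$ times an Euler product absolutely convergent for $\re s>\tfrac12$; its residue at $s=1$ therefore carries the factor $\operatorname{Res}_{s=1}\zeta_{\Q(\zeta_3)}(s)=L(1,\chi_{-3})=\pi/(3\sqrt3)$, which is the origin of the $\pi/\sqrt3$ in the leading constant. Summing these simple-pole contributions over all $(m,n)$ with $mn^2$ a cube, and accounting for the ramified local factors, produces a main term $cX^{1/2}$ in which $c$ is an explicit, absolutely convergent product; bookkeeping the local factors identifies it as $\tfrac{7\pi}{26\sqrt3}\alpha$ with $\alpha$ as stated. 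If instead $mn^2$ is not a perfect cube, then by the Chebotarev density theorem applied to the $S_3$-extension $\Q(\zeta_3,(mn^2)^{1/3})/\Q$, one third of the primes $p\equiv 1\bmod 3$ have $f_{mn^2}(p)=2$ and two thirds have $f_{mn^2}(p)=-1$; as $\tfrac13\cdot 2+\tfrac23\cdot(-1)=0$, the Euler product is holomorphic at $s=1$ and such $(m,n)$ contribute only to the error.

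The main obstacle is making this last statement quantitative and uniform in $(m,n)$: one needs genuine cancellation in $\sum_{q\le Y}f_a(q)$ for non-cube $a$ (up to a power of $X$), and this is governed by the Artin $L$-functions of the $S_3$-fields $\Q(\zeta_3,a^{1/3})$ (equivalently, Hecke $L$-functions over $\Q(\sqrt{-3})$). Because $a$ ranges over an infinite family, no uniform zero-free region is available — only non-vanishing on the line $\re s=1$, together with the classical zero-free region for $\zeta$ — so a Perron/Tauberian (Landau--Selberg--Delange) extraction of the main term yields a saving of only an arbitrary power of $\log X$; this Siegel--Walfisz-type phenomenon is exactly what produces the error term $O_C(X^{1/2}(\log X)^{-C})$, with its constant ineffective in $C$. (A cubic large sieve inequality of Heath-Brown type could be used in place of the $L$-function input to obtain the cancellation directly.) The remaining work — justifying the truncation and the interchanges of summation, bounding the tail of the sum over $(m,n)$, and the Euler-product bookkeeping behind the constant $\tfrac{7\pi}{26\sqrt3}\alpha$ — is routine but lengthy.
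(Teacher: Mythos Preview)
Your overall strategy matches the paper's: rewrite $\zeta_F^*(1)=|L(1,\chi)|^2$ for a cubic Dirichlet character $\chi$, truncate and open the product, and isolate the main term from the locus $mn^2\in\Q^{\times 3}$. The paper parametrises cubic characters by ideals $I\subset\Z[\zeta_3]$ of squarefree norm via the cubic residue symbol rather than by conductor, but this is the same thing, and the identification of the constant proceeds along your lines.

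The gap is in your handling of the off-diagonal. You claim that a Landau--Selberg--Delange extraction on $\sum_{q\le Y}f_a(q)$ yields a saving of any power of $\log Y$ uniformly over non-cube $a$ up to a power of $X$, citing only non-vanishing of the relevant Hecke $L$-functions on $\re s=1$. This does not work: LSD needs analytic control in a region strictly to the left of the $1$-line, and the available zero-free region for $L(s,(a/\cdot)_3)$ has width $\asymp 1/\log a$. For $a$ a fixed power of $Y$ this gives only a bounded multiplicative saving, not $(\log Y)^{-C}$; a Siegel--Walfisz bound of the shape you need is only available when $a\le(\log Y)^{O(1)}$. And you cannot avoid large $a$ by truncating shorter: with $q_\chi$ up to $X^{1/2}$, the P\'olya--Vinogradov tail forces the Dirichlet polynomials to have length a genuine power of $X$. (Aside: cubic characters are complex, hence admit no Siegel zeros, so the ineffectivity remark is also off.)

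The paper closes this gap by splitting on the size of $n_1,n_2$. When some $n_i>(\log X)^A$ it does \emph{not} fix $n_1n_2^2$ and sum over characters; instead it retains the bilinear structure $\sum_I\sum_n n^{-1}\bigl(\tfrac{n}{I}\bigr)_3$ and applies a bilinear-sieve estimate for oscillating characters (Proposition~\ref{prop:bilinearsieve}) to obtain the saving from cancellation in both variables simultaneously. Only once both $n_i\le(\log X)^A$ does one fix $(n_1,n_2)$ and invoke a Siegel--Walfisz-type bound (Lemma~\ref{lem:KPSS_LSD}) on the sum over $I$. Your parenthetical about Heath-Brown's cubic large sieve is exactly the right tool for the large range and would substitute for the bilinear sieve --- but it is not an optional alternative to the $L$-function input; without it, or an equivalent device, the argument does not close.
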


Theorem \ref{thm:LTAvg} may look classical, but we could only find the average zeta residue for quadratic extensions (see e.g.~\cite{Jut73}*{Thm.~3} and \cite{Wolke}*{Thm.~1}). In our setting, the Brauer--Siegel Theorem \cite{MR1282723}*{Ch.~XVI} states that $\log(h_F R_F)/ \log (\Delta_F^{1/2}) \to 1$ as $\Delta_F \to \infty$. Thus via the class number formula, Theorem \ref{thm:LTAvg} may be viewed as an averaged version of the Brauer--Siegel Theorem without the logarithmic weighting. We actually require a slightly more general version of Theorem \ref{thm:LTAvg} which gives control of ramification in order to input the multiplicative function $\prod_{p \mid \Delta_F}(1-1/p)$ from Theorem \ref{thm:fixed_cubic} (see Theorem \ref{thm:LTAvg_d}).

We prove Theorem \ref{thm:LTAvg} by expanding $\zeta_F$ into a product of Dirichlet $L$-functions and reduce to obtaining an asymptotic formula for the second moment $|L(1,\chi)|^2$ as $\chi$ varies over cubic Dirichlet characters of bounded conductor.  This is then achieved by character sum techniques, applying a version of the bilinear sieve over number fields \cite{MR4564991}*{Prop.~4.21} and a Siegel--Walfisz-type result over number fields from \cite{KPSS}*{Lem.~4.8}.  We expect results like Theorem \ref{thm:LTAvg} to play a similar role in other inductive approaches to Malle's conjecture, since special values of Artin L-functions appear in the leading constant for Malle's conjecture as part of the Tamagawa measure \cite{LS24}*{(8.4)}, and to be achievable in the abelian case by a similar strategy.

An inductive perspective for Malle's conjecture is discussed in \cite{LS24}*{Conj.~9.6} and \cite{ALOWW25}. However the key difference to our setting is that this framework is for \emph{unbalanced} (or \emph{concentrated}) heights, which means that the collection of minimal index elements do not generate $G$. In the unbalanced setting, the expectation is that one should sort according to the fibration given by quotienting out by the normal subgroup generated by the minimal weight elements (the Iitaka fibration), and that the sum of the leading constants converges. However our situation corresponds to a \textit{balanced height}, and the sum of the leading constants over our fibration does not converge. In fact it is controlling the sum over these leading constants which is a challenge in our proof, with \Cref{thm:LTAvg} being the key ingredient. 

\subsection{Relation to stacks and Manin's conjecture}  \label{sec:stacks_intro}
Malle's conjecture has recently been reinterpreted in a stacky framework by Ellenberg--Satriano--Zurieck-Brown \cite{MR4557890} and Darda--Yasuda \cites{MR4639951,DardaYasudaDMStacks}, with a prediction of Loughran--Santens \cite{LS24} regarding the leading constant for Malle's conjecture. Our approach can be put into this framework as follows: We count rational points on the classifying stack $BA_4$ by partitioning according to the fibration $BA_4\to B\ZZ/3\ZZ$ induced by a choice of surjective homomorphism $A_4\to \ZZ/3\ZZ$;  this map associates to an $A_4$-quartic its cubic resolvent. The fibres of this map are classifying stacks of a group scheme with underlying abelian group the Klein $4$-group. Therefore the counts in Theorem \ref{thm:fixed_cubic} fit into the version of Malle's conjecture for group schemes. We verify in \Cref{sec:stacks} that the leading constant obtained in Theorem \ref{thm:fixed_cubic} agrees with the conjectural leading constant from \cite{LS24}*{Conj.~9.1}, up to $2$-adic and archimedean factors.

In fact, the innovation of restricting to cyclic cubic fields with small discriminant to control error terms is inspired by a similar trick for lower bounds for Manin's conjecture for conic bundle surfaces utilised in \cite{FLS18}*{\S4.3}. Here one is given a conic bundle surface $X \to \PP^1$ and sorts rational points of bounded height on $X$ according to the conic fibration; one controls the errors terms from the count on each conic by restricting to conics of small height, in an analogous way to our paper.

Moreover, in recent work \cite{BDLPR} on Manin's conjecture for the symmetric square of a Fano variety, a key ingredient \cite{BDLPR}*{Prop.~1.8} is an asymptotic for moments of special values of Artin $L$-functions in quadratic twist families, in a similar manner to our Theorem \ref{thm:LTAvg} on cyclic cubic families. These parallel developments highlight the common tool kit required for both Manin's and Malle's~Conjecture.

\subsection{Overview of the paper}
In \S \ref{sec:squarefree} we study problems related to counting ideals of a number field with squarefree norm, which will arise in various parts of our proofs. Our main result here is Theorem \ref{thm:idealcounting}, which gives a count for such ideals with class group conditions imposed and with an effective error term.

In \S \ref{sec:cubic} we prove Theorem \ref{thm:LTAvg} and in \S \ref{sec:quartic} we prove Theorems \ref{thm:fixed_cubic} and \ref{thm:main}. We finish in \S \ref{sec:stacks} by explaining how Theorem \ref{thm:fixed_cubic} agrees with the predictions from \cite{LS24} for counting points on the stack $BA_4$.

\subsection{Notation and Conventions}
For each number field $F$, we write $\cO_F$ for the ring of integers, $I_F$ for the set of ideals of $\cO_F$, $\Delta_F$ for the (absolute) discriminant, and $\Cl_F$ for the ideal class group.  We denote by $\zeta_F(s)$ the Dedekind zeta function of $F$, and by $\zeta_F^*(1)$ its residue at $1$.  For each ideal $J\leq \cO_F$, we write $\rad(J)$ for the radical (i.e. the squarefree product of prime ideals dividing $J$), and $\cN(J)$ the norm. We let $\zeta_3 = e^{2 \pi i/3}$ and denote by $\art{\cdot}{\cdot}_3$ a fixed choice of cubic residue symbol.

\subsection*{Acknowledgements}
Daniel Loughran
was supported by UKRI Future Leaders Fellowship
\texttt{MR/V021362/1}.
Ross Paterson was supported by the Heilbronn Institute for Mathematical Research.

\section{Ideals of squarefree norm} \label{sec:squarefree}
Throughout the section, we shall be concerned with the problem of counting ideals of squarefree norm which is coprime to $M$, together with certain ideal class constraints, which will be useful in what follows.  

\subsection{\texorpdfstring{$\mathbb{Q}$}{Q}-multiplicative functions}
In what follows we will need to deal with instances of functions on the monoid of ideals $I_F$ of a number field $F$.  The functions of interest are given by taking the norm of an ideal, then applying a multiplicative function on $\ZZ$.  Such a function need not be a multiplicative function on $I_F$. However they satisfy a weak version of multiplicativity. We study briefly the formal properties of such functions here. We skip proofs which are completely elementary.

\begin{definition}
    Say that a function $f:I_F\to\CC$ is \textit{$\QQ$-multiplicative} if for every pair $\fa,\fb\in I_F$ such that $\cN(\fa)$ is coprime to $\cN(\fb)$ we have
    \[f(\fa\fb)=f(\fa)f(\fb).\]
    We say that $f$ is \textit{multiplicative} if the same holds when $\fa$ and $\fb$ are coprime ideals.
\end{definition}
\begin{definition}
    Let $\mu_F:I_F\to \set{\pm 1}$ be the M\"{o}bius  function defined on powers of prime ideals $\fp^k$ by
    \[\mu_F(\fp^k):=\begin{cases}-1&\textnormal{if }k=1,\\0&\textnormal{if }k\geq 2.\end{cases}\]
\end{definition}

\begin{definition}
    The Dirichlet convolution of two functions $f,g:I_F\to \CC$ is $f\star g:I_F\to\CC$ defined by
    \[f\star g(\fa):=\sum_{\fb\mid\fa}f(\fb)g(\fa\fb^{-1}).\]
\end{definition}

\begin{lemma}\label{lem:inverseconv}
    Let $f:I_F\to \CC$ be any function.  Then
    \[f(\fa)=\sum_{\fb\mid\fa}(f\star\mu_F)(\fb)\]
\end{lemma}

\begin{lemma}\label{lem:convolutionofKmults}
    If $f,g:I_F\to\CC$ are both $\QQ$-multiplicative then so is $f\star g$.
\end{lemma}

\subsection{A special \texorpdfstring{$\mathbb{Q}$}{Q}-multiplicative function}

Let $\one_M$ denote the indicator function on $\Z$ which takes value $1$ on integers coprime to $M$ and $0$ otherwise. The indicator function for squarefree ideals of a number field $F$, whose norm is coprime to $M$ is the $\Q$-multiplicative function defined by $\fa\mapsto \mu^2(\cN(\fa))\one_{M}(\cN(\fa))$.   We now consider its Dirichlet inverse, which will play a key role in what follows.

\begin{definition}\label{def:gamma}
    For each number field $F$ and integer $M$, define the $\QQ$-multiplicative function 
    \[\gamma_{F,M}:=\braces{\mu^2(\cN(\cdot))\one_{M}(\cN(\cdot))}\star\mu_F.\]
    When $M=1$ then we will simply write $\gamma_F:=\gamma_{F,1}$.
\end{definition}

We want a more concrete description, so that we can bound this function.  By \Cref{lem:convolutionofKmults} the function $\gamma_{F,M}$ is $\QQ$-multiplicative, so it suffices to determine the behaviour above each fixed rational prime $p$.

\begin{lemma}\label{lem:gammaexplicit}
Let $F/\QQ$ be a finite Galois extension, $\set{\fp_i~:~i=1,\dots,t}$ be a set of distinct prime ideals of $\cO_F$ with equal residue characteristic $p$, and let $f_p$ denote the inertia degree at $p$ in $F/\QQ$.  If $p \nmid M$, then for each vector $\mathbf{a}=(a_i)_i\in \ZZ_{\geq 1}^t$ we have
\[\gamma_{F,M}\braces{\prod_{i=1}^t\fp_i^{a_i}}=\begin{cases}
    (-1)^t, & \textnormal{if }f_p>1\textnormal{ and }\mathbf{a}=(1,\dots,1);\\
    (-1)^{t-1}(t-1), &\textnormal{if } f_p=1\textnormal{ and }\mathbf{a}=(1,\dots,1);\\
    (-1)^t, & \textnormal{if }f_p=1\textnormal{ and }\mathbf{a}=(1,\dots,1) + \mathbf{e}_k\textnormal{ for some }k;\\
    0, &\textnormal{else,}
\end{cases}\]
where $\mathbf{e}_k$ denotes the elementary basis vector with the value $1$ at the $k$th coordinate and $0$ at all others.
Else, if $p\mid M$ then 
\[\gamma_{F,M}\braces{\prod_{i=1}^t\fp_i^{a_i}}=\begin{cases}
    (-1)^t&\textnormal{if }\mathbf{a}=(1,\dots,1);\\
    0&\textnormal{else}
\end{cases}\]
\end{lemma}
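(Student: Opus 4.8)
The plan is to exploit the $\QQ$-multiplicativity of $\gamma_{F,M}$ established in \Cref{lem:convolutionofKmults}, which reduces the computation to the prime-by-prime behaviour: it suffices to evaluate $\gamma_{F,M}$ on an arbitrary product $\prod_{i=1}^t \fp_i^{a_i}$ of powers of the primes above a single rational prime $p$. So fix $p$, write $g = \mu^2(\cN(\cdot))\one_M(\cN(\cdot))$ for the indicator of squarefree ideals with norm coprime to $M$, so that $\gamma_{F,M} = g \star \mu_F$ by definition. Unwinding the Dirichlet convolution over the divisors of $\prod \fp_i^{a_i}$, I get
\[
\gamma_{F,M}\!\Big(\prod_i \fp_i^{a_i}\Big) = \sum_{0 \le b_i \le a_i} g\!\Big(\prod_i \fp_i^{b_i}\Big)\,\mu_F\!\Big(\prod_i \fp_i^{a_i - b_i}\Big).
\]
The factor $\mu_F(\prod_i \fp_i^{a_i-b_i})$ vanishes unless every exponent $a_i - b_i \in \{0,1\}$, in which case it equals $(-1)^{\#\{i : a_i - b_i = 1\}}$. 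Thus the sum is supported on those $\mathbf{b}$ obtained from $\mathbf{a}$ by decreasing a subset $S \subseteq \{1,\dots,t\}$ of coordinates by $1$, and the corresponding term is $(-1)^{|S|} g(\prod_i \fp_i^{b_i})$.

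Next I bring in the constraint from $g$. The ideal $\prod_i \fp_i^{b_i}$ is squarefree iff all $b_i \le 1$, and its norm is $p^{f_p \sum_i b_i}$ (using that all $\fp_i$ have the same inertia degree $f_p$ since $F/\QQ$ is Galois), which is coprime to $M$ iff $p \nmid M$ or $\sum_i b_i = 0$. Combining this with the support condition on $\mathbf{a} - \mathbf{b}$ forces $\mathbf{a}$ itself to lie very close to the all-ones vector. Concretely: if some $a_i \ge 3$, no valid $\mathbf{b}$ exists and $\gamma_{F,M} = 0$; if all $a_i \in \{1,2\}$ then the only admissible $\mathbf{b}$ are those obtained by lowering exactly the coordinates where $a_i = 2$ (forced, to bring those down to $1$) together with an arbitrary further subset of the coordinates where $a_i = 1$ — and of those further choices, $g$ only survives for the ones landing in $\{0,1\}^t$. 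I then split into the stated cases: (i) $\mathbf{a} = (1,\dots,1)$; (ii) $\mathbf{a} = (1,\dots,1) + \mathbf{e}_k$; (iii) everything else gives $0$. In case (ii) with $p \nmid M$, exactly one coordinate must be lowered (the $k$th, forced) and no others may be, since lowering a coordinate with $a_i=1$ to $0$ is fine for $\mu_F$ but then $g$ applied to the resulting ideal is still $1$ only if we do not also need it — wait, the $k$th coordinate lowering is mandatory and additional lowerings of $1$-coordinates each contribute a valid term, so one must recount; the resulting alternating sum $\sum_{S} (-1)^{1+|S|}$ over subsets $S$ of the $t-1$ one-coordinates collapses to $0$ unless $t = 1$, giving $-1 = (-1)^t$. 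When $f_p > 1$ (so $p \nmid M$ automatically for the norm to be coprime, but more importantly $g$ vanishes on any nontrivial squarefree product since its norm $p^{f_p(\dots)}$ — no: I should instead note $f_p>1$ only changes which $\mathbf{b}$ with $\sum b_i \ge 1$ can occur when $p \mid M$), the bookkeeping is similar but the "$f_p=1$, all $a_i=1$" case produces the combinatorial sum $\sum_{S \subseteq \{1,\dots,t\}} (-1)^{|S|} g(\prod_{i\in S^c}\fp_i) $, and since every subset gives a squarefree ideal one must carefully evaluate $\sum_{S}(-1)^{|S|}[\text{norm coprime to } M]$, which is where the coefficient $(-1)^{t-1}(t-1)$ emerges (versus $(-1)^t$ when $f_p>1$, where the norm-coprimality kills all but $S = \{1,\dots,t\}$, whence the single term $(-1)^t$).

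The only real obstacle is the combinatorial bookkeeping in the $f_p = 1$ case: one must carefully track which of the $2^t$ subsets of coordinates yield a nonzero contribution from $g$ (i.e. produce a squarefree ideal whose norm $p^{f_p \cdot |S^c|}$ is coprime to $M$ — automatic when $p \nmid M$, but a strong restriction when $p \mid M$), and then evaluate the resulting signed sum. When $p\nmid M$ and $f_p=1$ and $\mathbf a=(1,\dots,1)$, every subset contributes, and the sum is $\sum_{j=0}^{t}\binom{t}{j}(-1)^j \cdot [\text{extra terms from } g]$; the cleanest route is to observe directly from $\gamma_F = \mu^2(\cN(\cdot))\one_M(\cN(\cdot)) \star \mu_F$ and \Cref{lem:inverseconv} that $\sum_{\mathbf{b}}\gamma_{F,M}(\prod \fp_i^{b_i}) = \mu^2(\cN)\one_M(\cN)$ evaluated on truncations, and solve the resulting triangular linear system for the values recursively in $t$ and in $|\mathbf{a}|$. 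I expect the rest — the $f_p > 1$ cases and the $p \mid M$ case, where $g$ forces $\mathbf{b} = \mathbf{0}$ outright — to be routine. All arithmetic is elementary and I would present only the $f_p = 1$, $p \nmid M$ computation in detail, as the paper indicates.
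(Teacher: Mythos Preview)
Your overall approach matches the paper's: expand the convolution and do a case analysis on $\mathbf{b}$. The paper's entire proof is the single identity
\[
\gamma_{F,M}\Bigl(\prod_{i=1}^t\fp_i^{a_i}\Bigr)
=\sum_{\mathbf{b}\in\{0,1\}^t}\mu_F\Bigl(\prod_i\fp_i^{b_i}\Bigr)\,
\mu^2\bigl(p^{\,f_p\sum_i(a_i-b_i)}\bigr)\,\one_M\bigl(p^{\,f_p\sum_i(a_i-b_i)}\bigr),
\]
followed by ``simple case analysis''. So the strategy is right; the problem is in your execution.

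The gap is that you are applying the wrong nonvanishing condition for $g=\mu^2(\cN(\cdot))\one_M(\cN(\cdot))$. You write ``the ideal $\prod_i\fp_i^{b_i}$ is squarefree iff all $b_i\le 1$'' and then reason from that. But $g$ is \emph{not} the indicator of squarefree ideals: it is the indicator of ideals with squarefree \emph{norm}. Since $\cN\bigl(\prod_i\fp_i^{b_i}\bigr)=p^{\,f_p\sum_i b_i}$, the correct condition is $f_p\sum_i b_i\le 1$, which is far more restrictive than ``all $b_i\le 1$''. This is exactly why your $(1,\dots,1)+\mathbf{e}_k$ computation collapses to $0$ for $t\ge 2$ and why you cannot see what distinguishes $f_p=1$ from $f_p>1$ in the $(1,\dots,1)$ case (you attribute it to ``norm-coprimality'', which plays no role when $p\nmid M$).

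With the correct condition everything is immediate. For $p\nmid M$ and $f_p=1$: in the $(1,\dots,1)$ case the admissible $\mathbf{b}$ are $\mathbf{0}$ and the $t$ unit vectors, giving $(-1)^t+t(-1)^{t-1}=(-1)^{t-1}(t-1)$; in the $(1,\dots,1)+\mathbf{e}_k$ case the single admissible $\mathbf{b}$ is $\mathbf{e}_k$, giving $(-1)^t$. For $f_p>1$ or $p\mid M$ the only admissible $\mathbf{b}$ is $\mathbf{0}$, forcing all $a_i=1$ for a nonzero value. No recursion or triangular system is needed.
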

\begin{proof}
    Note that by definition
    \[\gamma_{F,M}\braces{\prod_{i=1}^t\fp_i^{a_i}}=\sum_{\mathbf{b}\in\set{0,1}^t}\mu_F\braces{\prod_i\fp_i^{b_i}}\mu^2\braces{p^{f_p\sum_ia_i-b_i}}\one_M\braces{p^{f_p\sum_ia_i-b_i}},\]
    from which the claim follows by simple case analysis.
\end{proof}

\begin{lemma}\label{lem:gammabound}
  Let $d\geq 1$ be an integer.  For every integer $M\geq 1$, every finite Galois extension $F/\QQ$ of degree $d$, and every real number $X>1$ we have
    \[\sum_{\substack{\fb\leq \cO_F\\\cN(\fb)\leq X}}\abs{\gamma_{F,M}(\fb)}\ll_d 2^{d\omega(M)}X^{1/2}\log(X)^{2^{d}d^2-1},\]
    where the implied constant depends only on the degree $d$, and is uniform in the choice of $M$, $X$ and $F$. 
    In particular, the Dirichlet series 
    $L(\gamma_{F,M},s)=\sum_{\substack{\fa\leq \cO_F}}\gamma_{F,M}(\fa)\cN(\fa)^{-s}$ converges absolutely for $\re s > 1/2$. 
\end{lemma}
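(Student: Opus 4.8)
The plan is to reduce to the case $M=1$ and then bound the resulting sum by comparison with a multiplicative function on $\ZZ$ whose summatory function is classical. Since $\gamma_{F,M}$ is $\QQ$-multiplicative, each ideal $\fb$ factors uniquely as $\fb=\fb_M\fb_0$ with $\cN(\fb_M)$ supported on the primes dividing $M$ and $\cN(\fb_0)$ coprime to $M$, and then $\gamma_{F,M}(\fb)=\gamma_{F,M}(\fb_M)\gamma_{F,M}(\fb_0)$. By \Cref{lem:gammaexplicit} the function $\gamma_{F,M}$ agrees with $\gamma_F=\gamma_{F,1}$ on ideals coprime to $M$, whereas $\gamma_{F,M}(\fb_M)$ vanishes unless $\fb_M$ is a product of distinct prime ideals, in which case $|\gamma_{F,M}(\fb_M)|=1$; the number of such $\fb_M$ is at most $\prod_{p\mid M}2^{g_p}\leq 2^{d\omega(M)}$, where $g_p\leq d$ is the number of primes of $F$ over $p$. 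Summing over $\fb_M$ first and using $\cN(\fb_M)\geq1$ to discard the factor $\cN(\fb_M)^{-1/2}$, it suffices to prove that $\sum_{\cN(\fb)\leq Y}|\gamma_F(\fb)|\ll_d Y^{1/2}\log(Y)^{2^dd^2-1}$, uniformly over finite Galois $F/\QQ$ of degree $d$ and all $Y>1$.

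Set $g_F(n):=\sum_{\cN(\fb)=n}|\gamma_F(\fb)|$, so that $\sum_{\cN(\fb)\leq Y}|\gamma_F(\fb)|=\sum_{n\leq Y}g_F(n)$; the $\QQ$-multiplicativity of $\gamma_F$ together with the unique factorisation of an ideal over the rational primes dividing its norm shows that $g_F$ is multiplicative on $\ZZ$. Reading off \Cref{lem:gammaexplicit} with $M=1$, whenever $\gamma_F(\fb_p)\neq0$ for a nontrivial $p$-ideal $\fb_p$ one has $\cN(\fb_p)\geq p^2$ and $|\gamma_F(\fb_p)|\leq\max(1,g_p-1)\leq d$, and for each $k$ the number of $p$-ideals of norm $p^k$ on which $\gamma_F$ is nonzero is at most $2^{g_p}(1+g_p/2)$. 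Consequently $g_F(p)=0$, $g_F(p^k)=0$ once $k>d+1$, and $0\leq g_F(p^k)\leq B:=2^dd^2$ for all $p$ and $k$.

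Since $g_F$ is multiplicative with $g_F(p)=0$ and $g_F(p^k)\leq B$, it is dominated termwise by the fixed multiplicative function $\tilde g$ defined by $\tilde g(p)=0$ and $\tilde g(p^k)=B$ for $k\geq2$, so $\sum_{n\leq Y}g_F(n)\leq\sum_{n\leq Y}\tilde g(n)$. The Dirichlet series of $\tilde g$ factors as
\[\sum_n\tilde g(n)n^{-s}=\prod_p\left(1+B\frac{p^{-2s}}{1-p^{-s}}\right)=\zeta(2s)^{B}H(s),\qquad H(s):=\prod_p(1-p^{-2s})^{B}\left(1+B\frac{p^{-2s}}{1-p^{-s}}\right),\]
and expanding each local factor of $H$ shows it equals $1+O_d(p^{-3s})$, so $H$ converges absolutely for $\re s>1/3$. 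Writing $\zeta(2s)^B=\sum_m d_B(m)m^{-2s}$ with $d_B$ the $B$-fold divisor function and $H(s)=\sum_j h(j)j^{-s}$, we get $\tilde g(n)=\sum_{jm^2=n}h(j)d_B(m)$, whence
\[\sum_{n\leq Y}\tilde g(n)=\sum_{j\leq Y}h(j)\sum_{m\leq\sqrt{Y/j}}d_B(m)\ll_B Y^{1/2}\log(Y)^{B-1}\sum_j\frac{|h(j)|}{j^{1/2}}\]
by the classical bound $\sum_{m\leq Z}d_B(m)\ll_B Z\log(Z)^{B-1}$ (valid since $B\in\ZZ_{\geq1}$); the last sum converges because $1/2>1/3$ and depends only on $d$. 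This yields the displayed estimate, hence the bound on $\sum_{\cN(\fb)\leq X}|\gamma_{F,M}(\fb)|$, and the absolute convergence of $L(\gamma_{F,M},s)$ for $\re s>1/2$ then follows by partial summation. The only genuinely non-routine step is the prime-by-prime combinatorial bookkeeping for $\gamma_F$ via \Cref{lem:gammaexplicit} — in particular arranging that every estimate is uniform in $F$ so that all implied constants depend on $d$ alone; the analytic content is then just the standard divisor estimate.
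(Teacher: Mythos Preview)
Your proof is correct and follows essentially the same approach as the paper: both reduce to $M=1$ by bounding the squarefree ideals supported on primes dividing $M$ by $2^{d\omega(M)}$, then use \Cref{lem:gammaexplicit} to see that nonzero contributions have squarefull norm with multiplicity controlled by $2^d d^2$. The only cosmetic difference is that the paper sums $(2^d d^2)^{\omega(b)}$ over radicals $b\leq X^{1/2}$ and invokes a Shiu-type bound, whereas you package the same information as $g_F(p)=0$, $g_F(p^k)\leq 2^d d^2$, factor the dominating Dirichlet series as $\zeta(2s)^{B}H(s)$, and use the standard divisor estimate---these are equivalent formulations of the same analytic step.
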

\begin{proof}
    Firstly, by definition we can rewrite each ideal $\fb$ as $\fb=\fb'\fb''$ where $\fb'$ and $\fb''$ are coprime and $\fb'$ is supported on primes $\fp\mid M$.  Since $\gamma_{F,M}$ is $\QQ$-multiplicative, we break the sum across this factorisation and then estimate as follows
    \begin{align*}
    &\sum_{\substack{\fb\leq \cO_F\\\cN(\fb)\leq X}}\abs{\gamma_{F,M}(\fb)}
    \\&=\sum_{\substack{\fb'\leq \cO_F \textnormal{ supported }\\\textnormal{on primes }\fp\mid M}}\abs{\gamma_{F,M}(\fb')}\sum_{\substack{\fb''\leq \cO_F\\\textnormal{coprime to $M$}\\\cN(\fb'')\leq \frac{X}{\cN(\fb')}}}\abs{\gamma_{F}(\fb'')}
    \\&\leq \#\set{\fb'\leq \cO_F~:~
                    \fb'\textnormal{ squarefree and divides }M}
            \sum_{\substack{\fb''\leq \cO_F\\\cN(\fb'')\leq X}}\abs{\gamma_F(\fb'')},
    \end{align*}
    where the final inequality follows from \Cref{lem:gammaexplicit}.
    We then bound 
    \[\#\set{\fb'\leq \cO_F~:~
                    \fb'\textnormal{ squarefree and divides }M}
    \leq 2^{d\omega(M)}.
    \]
    Hence it remains to prove the claimed bound when $M=1$.  If $\gamma_F(\fb)\neq 0$ then it follows from \Cref{lem:gammaexplicit} that $\rad(\cN(\fb))^2\mid \cN(\fb)$ and so when $\cN(\fb)\leq X$ we must have $\rad(\cN(\fb))\leq X^{1/2}$. This yields
    \begin{align*}
        \sum_{\substack{\fb\leq \cO_F\\\cN(\fb)\leq X}}\abs{\gamma_{F}(\fb)}
        =\sum_{\substack{b\leq X^{1/2}\\\text{squarefree}}}\sum_{\substack{\fb\leq \cO_F\\\cN(\fb)\leq X\\\rad(\cN(\fb))=b}} \abs{\gamma_F(\fb)}.
    \end{align*}
    Now, for fixed squarefree $b$ we wish to estimate $\abs{\gamma_F(\fb)}$ for each $\fb$ with $\rad(\cN(\fb))=b$.  Such an ideal $\fb$ must be of the form 
    
    \[\fb=\prod_{p\mid b}\prod_{\fp\mid p}\fp^{a_\fp},\]
    where the outer product is over rational primes and the inner over primes of $F$.  Moreover if we insist that $\gamma_F(\fb)\neq 0$, then by \Cref{lem:gammaexplicit}, for each rational $p\mid b$ there are at most 
    \[\sum_{t=1}^{d}\binom{d}{t}(t+1)=d2^{d-1}+2^{d}-1 = 2^{d}\braces{\frac{d}{2}+1}-1\leq 2^dd\]
    choices of the exponents $(a_\fp)_{\fp\mid p}$ such that $\gamma_F(\prod_{\fp\mid p}\fp^{a_\fp})\neq 0$.  Hence, for each squarefree $b$ there are at most $\braces{2^{d}d}^{\omega(b)}$ choices of $\fb$ such that $\gamma_F(\fb)\neq 0$ and $\rad(\cN(\fb))=b$.    Further, it follows from \Cref{lem:gammaexplicit} that we can bound $\abs{\gamma_F(\fb)}\leq d^{\omega(b)}$.

    Putting these together, we have
    \[\sum_{\substack{\fb\leq \cO_F\\\cN(\fb)\leq X}}\abs{\gamma_F(\fb)}\leq 
    \sum_{\substack{b\leq X^{1/2}\\\text{squarefree}}}
    \braces{2^dd^2}^{\omega(b)}.\]
    We now apply \cite{MR552470}*{Theorem 1} to bound the sum of these uniformly by 
    the required $X^{1/2}\log(X)^{2^dd^2-1}$.
\end{proof}

We record here some special values of the Dirichlet series $L(\gamma_{F,M},s)$ which will be used later. (Compare Lemma \ref{lem:L_gamma_F} with Theorem \ref{thm:fixed_cubic}).
\begin{lemma} \label{lem:L_gamma_F}
    Let $F$ be a cyclic cubic field. Then
    $$L(\gamma_{F,\Delta_F},1) = \prod_{p\text{ ramified in }F}\braces{1 - \frac{1}{p}}
    \prod_{p \text{ split in } F}\left(1 - \frac{1}{p}\right)^3\left(1 + \frac{3}{p}\right) 
    \prod_{p \text{ inert in } F}\left(1 - \frac{1}{p^3}\right).$$
\end{lemma}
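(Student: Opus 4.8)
The plan is to compute the Euler product for $L(\gamma_{F,M},s)$ with $M=\Delta_F$, evaluate it at $s=1$, and check that the local factors match the claimed product. Since $\gamma_{F,\Delta_F}$ is $\QQ$-multiplicative by Lemma~\ref{lem:convolutionofKmults}, the Dirichlet series factors as an Euler product over rational primes $p$:
\[
L(\gamma_{F,\Delta_F},s) = \prod_p \left( \sum_{\mathbf{a}} \gamma_{F,\Delta_F}\!\left(\prod_{\fp_i \mid p} \fp_i^{a_i}\right) p^{-f_p s \sum_i a_i} \right),
\]
where $\mathbf{a}$ ranges over $\ZZ_{\geq 1}^t$ together with the empty tuple contributing $1$. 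So it suffices to compute each local factor, which I would do by splitting into the three cases according to the splitting type of $p$ in the cyclic cubic field $F$, using the explicit formula in Lemma~\ref{lem:gammaexplicit}.

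First, for $p$ ramified in $F$: since $F/\QQ$ is cyclic cubic, ramified primes are totally ramified, so there is a single prime $\fp$ above $p$ with $f_p = 1$, and $p \mid \Delta_F = M$. By the second (``$p \mid M$'') case of Lemma~\ref{lem:gammaexplicit} with $t=1$, the only nonzero value is $\gamma_{F,M}(\fp) = -1$, giving local factor $1 - p^{-s}$, which at $s=1$ is $1 - 1/p$. Second, for $p$ inert in $F$: there is one prime $\fp$ with $f_p = 3$, and $p \nmid \Delta_F$, so the first case of Lemma~\ref{lem:gammaexplicit} applies with $t = 1$, $f_p > 1$: the only nonzero value is $\gamma_{F,M}(\fp) = -1$, giving local factor $1 - p^{-3s}$, i.e. $1 - 1/p^3$ at $s=1$. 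Third, for $p$ split in $F$: there are $t = 3$ primes above $p$, each with $f_p = 1$, and $p \nmid \Delta_F$. Here I use the first (``$p \nmid M$'', $f_p = 1$) branch of Lemma~\ref{lem:gammaexplicit}: the value on $\prod \fp_i$ (all exponents $1$) is $(-1)^{t-1}(t-1) = 2$, and the value on $\prod \fp_i$ times one extra $\mathbf{e}_k$ is $(-1)^t = -1$ for each of the $t = 3$ choices of $k$; more generally for any subset $S$ of the three primes with $|S| = t'$ the value on $\prod_{i \in S}\fp_i$ is $(-1)^{t'-1}(t'-1)$ and on that times one $\mathbf{e}_k$ is $(-1)^{t'}$. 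Summing over all subsets $S \subseteq \{1,2,3\}$ and all admissible exponent patterns gives the local factor
\[
\sum_{t'=0}^{3} \binom{3}{t'}\left( (-1)^{t'-1}(t'-1)\, p^{-t' s} + t'\,(-1)^{t'}\, p^{-(t'+1)s} \right),
\]
which I would simplify (the $t'=0$ and $t'=1$ terms vanish) and show equals $(1 - p^{-s})^3(1 + 3p^{-s})$; evaluating at $s=1$ yields $(1 - 1/p)^3(1 + 3/p)$. Collecting the three cases reproduces the asserted product.

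The only real point requiring care is the split-prime local factor: one must correctly enumerate, for the $t=3$ primes above $p$, all ideals $\prod_i \fp_i^{a_i}$ on which $\gamma_{F,\Delta_F}$ is nonzero — namely those where the exponent vector restricted to its support is either all $1$'s or all $1$'s plus a single basis vector — and track the signs and the norm exponents. This is elementary bookkeeping, but it is where an error would most plausibly creep in; verifying the resulting polynomial identity $(1-x)^3(1+3x) = \sum_{t'} \binom{3}{t'}\big((-1)^{t'-1}(t'-1)x^{t'} + t'(-1)^{t'}x^{t'+1}\big)$ in $x = p^{-s}$ makes the computation rigorous. Convergence of the Euler product at $s=1$ is not an issue separately: Lemma~\ref{lem:gammabound} already guarantees absolute convergence of $L(\gamma_{F,M},s)$ for $\re s > 1/2$, so the product manipulation and the evaluation at $s=1$ are legitimate.
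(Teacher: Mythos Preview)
Your approach is essentially the paper's: factor $L(\gamma_{F,\Delta_F},s)$ as an Euler product via $\QQ$-multiplicativity and compute each local factor from Lemma~\ref{lem:gammaexplicit} according to the splitting type of $p$, with the split case handled by enumerating the finitely many ideals above $p$ on which $\gamma_{F,\Delta_F}$ is nonzero (the paper lists the exponent patterns $(0,1,1),(1,1,1),(0,0,2),(0,1,2),(1,1,2)$ directly rather than grouping by $|S|=t'$, but the calculation is the same). One small slip: your parenthetical ``the $t'=0$ and $t'=1$ terms vanish'' is not correct --- the $t'=0$ summand contributes the constant $1$, and for $t'=1$ the second piece gives $-3p^{-2s}$ (these are the $(0,0,2)$-type ideals) --- but your asserted identity $\sum_{t'}\binom{3}{t'}\big((-1)^{t'-1}(t'-1)x^{t'}+t'(-1)^{t'}x^{t'+1}\big)=1-6x^2+8x^3-3x^4=(1-x)^3(1+3x)$ is nonetheless correct and matches the paper's computation.
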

\begin{proof}
    As $\gamma_{F,\Delta_F}$ is $\Q$-multiplicative, we have 
    $$L(\gamma_{F,\Delta_F},1) = \prod_p\left(\sum_{\substack{\mathfrak{a} \\ \rad(\cN(\mathfrak{a})) \mid p}}  \frac{\gamma_F(\mathfrak{a})}{\cN(\mathfrak{a})}\right).$$
    Let $p$ be a prime number. We calculate the factor at $p$ using Lemma \ref{lem:gammaexplicit}.

    First assume that $p$ is ramified in $F$, so that $p\mid \Delta_F$. Then there is a unique prime of $F$ above $p$ and it has ramification degree $3$. Then Lemma \ref{lem:gammaexplicit} implies that we obtain the local factor $1 - 1/p$.

    If $p$ is inert in $F$ then by Lemma \ref{lem:gammaexplicit} the only non-zero values occur at the trivial ideal and the unique prime above $p$. In which case we obtain the local factor $1 - 1/p^3$.

    Now assume that $p$ is completely split in $F$. The non-trivial ideals which contribute are $\fp_1^{a_1}\fp_2^{a_2}\fp_3^{a_3}$ where $(p) = \fp_1\fp_2\fp_3$ and $(a_1,a_2,a_3)$ being a permutation of $(0,1,1), (1,1,1), (0,0,2), (0,1,2)$ or $(1,1,2)$. This yields
    $$ 1 - \frac{3}{p^2} + \frac{2}{p^3} - \frac{3}{p^2} + \frac{6}{p^3} - \frac{3}{p^4} 
=  1 - \frac{6}{p^2} + \frac{8}{p^3} - \frac{3}{p^4} = \left(1 - \frac{1}{p}\right)^3\left(1 + \frac{3}{p}\right) $$
    as required.
\end{proof}

\begin{lemma} \label{lem:L_gamma_3}
    Let $M \in \N$ with $3 \mid M$. Then
    $$L(\gamma_{\Q(\zeta_3),M},1) = 
    \frac{2}{3}\prod_{\substack{p\equiv 1 \bmod 3 \\ p \nmid M}}\left( 1 - \frac{1}{p}\right)^2\left(1 + \frac{2}{p}\right) \prod_{\substack{p\equiv 1 \bmod 3\\p\mid M}}\left( 1 - \frac{1}{p}\right)^2 \prod_{p\equiv 2 \bmod 3}\braces{1-\frac{1}{p^2}}.$$
\end{lemma}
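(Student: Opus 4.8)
The plan is to mimic the proof of Lemma \ref{lem:L_gamma_F}, using that $\gamma_{\Q(\zeta_3),M}$ is $\Q$-multiplicative to factor $L(\gamma_{\Q(\zeta_3),M},1)$ as an Euler product, and then to compute the local factor at each rational prime $p$ via Lemma \ref{lem:gammaexplicit}. The splitting behaviour in $F=\Q(\zeta_3)$ is governed by $p \bmod 3$: the prime $p=3$ is ramified, primes $p\equiv 1\bmod 3$ split completely (two primes above $p$, each of residue degree $1$), and primes $p\equiv 2\bmod 3$ are inert (one prime above $p$, residue degree $2$). Since $3\mid M$, the prime $3$ always contributes via the second case of Lemma \ref{lem:gammaexplicit}: the only nonzero value is $\gamma_{\Q(\zeta_3),M}(\fp_3)$ where $(3)=\fp_3^2$, with $\cN(\fp_3)=3$, giving the local factor $1 - 1/3 = 2/3$; this accounts for the leading constant.

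First I would handle the inert primes $p\equiv 2\bmod 3$. Whether or not $p\mid M$, there is a unique prime $\fp$ above $p$ with $\cN(\fp)=p^2$, and Lemma \ref{lem:gammaexplicit} (the $f_p>1$ case, resp.\ the $p\mid M$ case) gives nonzero values only at the trivial ideal and at $\fp$ itself, with $\gamma(\fp)=-1$; hence the local factor is $1 - 1/p^2$, matching the last product. Next I would handle split primes $p\equiv 1\bmod 3$ with $p\mid M$: here $(p)=\fp_1\fp_2$, each $\cN(\fp_i)=p$, and the $p\mid M$ branch of Lemma \ref{lem:gammaexplicit} gives nonzero values only when the exponent vector is $(1,\dots,1)$, i.e.\ at $1$, $\fp_1$, $\fp_2$, $\fp_1\fp_2$, with values $1,-1,-1,1$, so the factor is $1 - 2/p + 1/p^2 = (1-1/p)^2$, as claimed.

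The remaining case — split primes $p\equiv 1\bmod 3$ with $p\nmid M$ — is the only one requiring a genuine (if still routine) computation, and is the analogue of the completely split case in Lemma \ref{lem:L_gamma_F} but with $t=2$ primes above $p$ instead of $3$. Writing $(p)=\fp_1\fp_2$ with $\cN(\fp_i)=p$ and $f_p=1$, the nonzero contributions from Lemma \ref{lem:gammaexplicit} come from exponent vectors $(a_1,a_2)$ equal to a permutation of $(1,1)$ (value $(-1)^{2-1}(2-1)=-1$, norm $p^2$), or to $(1,1)+\mathbf{e}_k$, i.e.\ $(2,1)$ or $(1,2)$ (value $(-1)^2=1$, norm $p^3$), together with the trivial ideal. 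Summing gives the local factor
\[
1 - \frac{1}{p^2} + \frac{2}{p^3} = \left(1-\frac{1}{p}\right)^2\left(1 + \frac{2}{p}\right),
\]
which one checks by expanding the right-hand side. Assembling the four cases over all $p$ yields the stated product. The only mild subtlety is bookkeeping the single prime above $3$ versus the two primes above split $p$, and making sure the $3\mid M$ hypothesis is used exactly once (to place $p=3$ in the $p\mid M$ branch); I do not anticipate any real obstacle beyond this case-checking.
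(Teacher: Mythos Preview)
Your approach is exactly the paper's, but there is a genuine computational gap in the split $p\equiv 1\bmod 3$, $p\nmid M$ case. Lemma~\ref{lem:gammaexplicit} is stated for exponent vectors in $\ZZ_{\geq 1}^t$, where $t$ is the number of primes actually appearing in the ideal. To compute the full local Euler factor you must sum over all ideals supported above $p$, which means taking $t=0,1,2$ separately. You have only recorded the $t=0$ and $t=2$ contributions. For $t=1$ with $f_p=1$, the case $\mathbf{a}=(1)$ gives $(-1)^{0}(0)=0$, but $\mathbf{a}=(2)=(1)+\mathbf{e}_1$ gives $(-1)^1=-1$, so each of $\fp_1^2$ and $\fp_2^2$ (norm $p^2$) contributes $-1/p^2$. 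Including these, the local factor is
\[
1 \;-\; \frac{1}{p^2} \;-\; \frac{2}{p^2} \;+\; \frac{2}{p^3} \;=\; 1 - \frac{3}{p^2} + \frac{2}{p^3} \;=\; \left(1-\frac{1}{p}\right)^2\left(1+\frac{2}{p}\right),
\]
which is the desired expression. Your stated sum $1 - 1/p^2 + 2/p^3$ does \emph{not} equal $(1-1/p)^2(1+2/p)$; expanding the right-hand side gives $1 - 3/p^2 + 2/p^3$. The paper's proof records these missing ideals as the permutations of $(a_1,a_2)=(0,2)$. Once you add them back in, the rest of your argument goes through verbatim.
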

\begin{proof}
    Let $p$ be a rational prime with $p \nmid M$. If $p \equiv 2 \bmod 3$ then $p$ is inert and then
    by Lemma~\ref{lem:gammaexplicit} the only non-zero value occurs at the trivial ideal and the unique prime above $p$. In which case we obtain the local factor $1 - 1/p^2$.  On the other hand, if $p \equiv 1 \bmod 3$ then the non-trivial ideals which contribute are $\fp_1^{a_1}\fp_2^{a_2}$ where $(p) = \fp_1\fp_2$ and $(a_1,a_2)$ being a permutation of $(1,1), (0,2), (1,2)$. This yields
    $$ 1 - \frac{1}{p^2} - \frac{2}{p^2} + \frac{2}{p^3}= 1 - \frac{3}{p^2} + \frac{2}{p^3}
    = \left(1 - \frac{1}{p}\right)^2\left(1 + \frac{2}{p}\right).$$
    Now assume that $p \mid M$.
    For $p=3$ there is a unique prime ideal over $3$, and we obtain
    $1 - 1/3= 2/3$.
    For $p \equiv 1 \bmod 3$ there are two primes ideals and we obtain 
    $1 - 2/p + 1/p^2$. For $p \equiv 2 \bmod 3$ there is a unique prime ideal and we obtain 
    $1-1/p^2$.
\end{proof}

\subsection{Counting ideals}

We now arrive at our target problem for this section:  we wish to count ideals with squarefree norm in a fixed number field with an effective error term.  We will make use of the following lemma, which is a simplification of a result of Loury-Douda--Taniguchi--Thorne.
\begin{lemma}[\cite{MR4381213}*{Theorem 3}]\label{lem:LDTT}
    Let $d\in\ZZ_{>0}$.  For every $X\geq1$, and every number field $F$ of degree $d$ such that $\abs{\Delta_F}\leq X$, 
\[\#\set{\mathfrak{a}\leq \mathcal{O}_F~:~ \mathcal{N}(\fa)\leq X}=\zeta_F^*(1) X + O_{d}\braces{X^{\frac{d}{d+1}}\log(X)^{d-1}},\]
where the implied constant depends only on the degree $d$.
\end{lemma}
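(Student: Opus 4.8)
The plan is to deduce this from \cite{MR4381213}*{Theorem 3}, of which — as indicated in the text preceding the statement — the lemma is essentially a simplification: one quotes that theorem and then performs the elementary bookkeeping collapsing its error term to the clean shape above in the regime $\abs{\Delta_F}\le X$. That theorem gives, for every degree-$d$ number field $F$ and all $X\ge 1$, an asymptotic for $\#\set{\fa\leq\cO_F:\cN(\fa)\leq X}$ with main term $\zeta_F^*(1)X$ and an explicit error term polynomial in both $X$ and $\abs{\Delta_F}$, of the rough shape $(\abs{\Delta_F}\,X^{d-1})^{1/(d+1)}$ up to a power of $\log(X\abs{\Delta_F})$; for $d=2$ this recovers the classical van der Corput exponent $X^{1/3}$ for lattice-point counts attached to quadratic ideal classes. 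Substituting $\abs{\Delta_F}\le X$ gives $\abs{\Delta_F}\,X^{d-1}\le X^{d}$, so the main error contribution is $\ll_d X^{d/(d+1)}$, while $\log(X\abs{\Delta_F})^{d-1}\ll_d\log(X)^{d-1}$ in the same range; any lower-order terms in their estimate are dealt with identically. This produces the stated bound with implied constant depending only on $d$.

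For orientation, the argument behind \cite{MR4381213}*{Theorem 3} runs as follows. One partitions the ideals according to their ideal class, choosing for each class an integral representative $\fc$ of the inverse class with $\cN(\fc)\ll_d\sqrt{\abs{\Delta_F}}$ by Minkowski's theorem; the ideals of norm $\le X$ in the given class then correspond, modulo units, to the nonzero $\alpha\in\fc$ with $\cN((\alpha))\le X\cN(\fc)$. Passing to the Minkowski embedding and then to logarithmic coordinates for the unit action, this becomes a count of lattice points in a homogeneously expanding region, whose leading term, summed over the $\#\Cl_F$ classes, is pinned to $\zeta_F^*(1)X$ by the analytic class number formula. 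The error is a boundary contribution for this lattice count: the trivial geometry-of-numbers bound gives $X^{1-1/d}$ per class, and the sharper exponent $1/(d+1)$ comes from treating the relevant exponential sums by Poisson summation and van der Corput methods. Uniformity in $F$ is obtained by tracking the covolumes of the ideal lattices (which are $\cN(\fc)\sqrt{\abs{\Delta_F}}$ up to constants), their successive minima (bounded below via $\cN((\alpha))\ge\cN(\fc)$ for $0\neq\alpha\in\fc$), and the shape of the unit fundamental domain (controlled by the regulator), invoking along the way the Minkowski-type inequality $h_FR_F\ll_d\sqrt{\abs{\Delta_F}}\log(\abs{\Delta_F})^{d-1}$.

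Because all of the analytic content is supplied by \cite{MR4381213}, there is no real obstacle here: the only thing to check carefully is that every term in their error estimate is indeed $\ll_d X^{d/(d+1)}\log(X)^{d-1}$ once $\abs{\Delta_F}\le X$. The point that would be genuinely delicate, were one to reprove the estimate from scratch, is obtaining the exponential-sum input \emph{uniformly} over all number fields of degree $d$ rather than for a single fixed field.
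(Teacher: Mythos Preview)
Your overall strategy---quote \cite{MR4381213}*{Theorem~3} and collapse the discriminant dependence using $\abs{\Delta_F}\le X$---is exactly what the paper does. But you have missed one genuine step. The cited theorem is stated with a proviso: the asymptotic
\[
\#\set{\fa\leq\cO_F:\cN(\fa)\leq X}=\zeta_F^*(1)X+O_d\braces{\abs{\Delta_F}^{1/(d+1)}X^{(d-1)/(d+1)}\log(X)^{d-1}}
\]
is asserted only when the error term does not exceed the main term. You claim the theorem applies ``for all $X\ge1$'' unconditionally, and your bookkeeping never revisits this hypothesis. The substitution $\abs{\Delta_F}\le X$ bounds the error by $X^{d/(d+1)}\log(X)^{d-1}$, but the main term is $\zeta_F^*(1)X$, and nothing you have said controls $\zeta_F^*(1)$ from below; without such control the hypothesis could fail.

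The paper closes this gap with Brauer--Siegel: for any $\epsilon>0$ one has $\zeta_F^*(1)\gg_\epsilon\abs{\Delta_F}^{-\epsilon}\ge X^{-\epsilon}$, so the main term is $\gg_\epsilon X^{1-\epsilon}$, which dominates $X^{d/(d+1)}\log(X)^{d-1}$ once $\epsilon<1/(d+1)$. This is a short but essential ingredient, and it is the only substantive thing your write-up is missing; the second and third paragraphs (sketching the proof of \cite{MR4381213}*{Theorem~3} itself) are fine as orientation but are not needed for the deduction.
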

\begin{proof}
    Their result states that
    \[\#\set{\mathfrak{a}\leq \mathcal{O}_F~:~ \mathcal{N}(\fa)\leq X}=\zeta_F^*(1) X + O_{d}\braces{\abs{\Delta_F}^{\frac{1}{d+1}}X^{\frac{d-1}{d+1}}\log(X)^{d-1}},\]
    provided that the error term is bounded by the main term.  Now, by the Brauer--Siegel theorem we know that for $\epsilon>0$ we have $\zeta_F^*(1)\gg_{\epsilon} \abs{\Delta_F}^{-\epsilon}$.  Hence the main term is at least $X^{1-\epsilon}$, whereas the error is at most $X^{\frac{d}{d+1}}\log(X)^{d-1}$.  Choosing $\epsilon<1/(d+1)$ then provides the required bound.
\end{proof}

\begin{proposition}\label{prop:sqfideals}  Let $d\in\ZZ_{>0}$ and $\epsilon>0$.  For every number field $F$ of degree $d$, integer $M\geq 1$, and real number $X\geq\abs{\Delta_F}^{1/\epsilon}$ we have
    \[\sum_{\substack{\fa\leq \cO_F \\\cN(\fa)\leq X}} \mu^2(\cN(\fa))\one_M(\cN(\fa))
    =\zeta^*_F(1)L(\gamma_{F,M},1)X+O_d\braces{2^{d\omega(M)}X^{1/2+\epsilon}\log(X)^{2^d(d^2+1)-1}}\]
    where the implied constant depends only on the degree $d$, and in particular is independent of $F$, $X$, and $M$.
\end{proposition}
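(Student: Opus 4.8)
The plan is to use Möbius inversion (\Cref{lem:inverseconv}) to write the indicator function $\mu^2(\cN(\cdot))\one_M(\cN(\cdot))$ as a convolution with $\gamma_{F,M}$, reducing the count to a sum of ideal-counting problems to which \Cref{lem:LDTT} applies. Explicitly, by \Cref{lem:inverseconv} and the definition of $\gamma_{F,M}$ (\Cref{def:gamma}), for every ideal $\fa\leq\cO_F$ we have $\mu^2(\cN(\fa))\one_M(\cN(\fa))=\sum_{\fb\mid\fa}\gamma_{F,M}(\fb)$, so
\[
\sum_{\substack{\fa\leq\cO_F\\\cN(\fa)\leq X}}\mu^2(\cN(\fa))\one_M(\cN(\fa))
=\sum_{\substack{\fb\leq\cO_F\\\cN(\fb)\leq X}}\gamma_{F,M}(\fb)\#\set{\fc\leq\cO_F:\cN(\fc)\leq X/\cN(\fb)}.
\]
I would split this outer sum at the threshold $\cN(\fb)\leq X/\abs{\Delta_F}$: for these $\fb$ the quotient $X/\cN(\fb)\geq\abs{\Delta_F}$, so \Cref{lem:LDTT} applies to the inner count, giving main term $\zeta_F^*(1)X/\cN(\fb)$ with error $O_d((X/\cN(\fb))^{d/(d+1)}\log(X)^{d-1})$.

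Summing the main terms gives $\zeta_F^*(1)X\sum_{\cN(\fb)\leq X/\abs{\Delta_F}}\gamma_{F,M}(\fb)\cN(\fb)^{-1}$, which I complete to the full Dirichlet series $L(\gamma_{F,M},1)$ at a cost controlled by \Cref{lem:gammabound}: the tail $\sum_{\cN(\fb)>X/\abs{\Delta_F}}\abs{\gamma_{F,M}(\fb)}\cN(\fb)^{-1}$ is, by partial summation against the bound in \Cref{lem:gammabound}, of size $O_d(2^{d\omega(M)}(X/\abs{\Delta_F})^{-1/2}\log(X)^{2^dd^2-1})$, and multiplying by $\zeta_F^*(1)X$ and using $\zeta_F^*(1)\ll_\epsilon\abs{\Delta_F}^{\epsilon}$ (Brauer--Siegel) together with $\abs{\Delta_F}\leq X^\epsilon$ bounds this by $O_d(2^{d\omega(M)}X^{1/2+O(\epsilon)}\log(X)^{2^dd^2-1})$, comfortably inside the stated error (after renaming $\epsilon$). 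For the error terms from \Cref{lem:LDTT}, I sum $\sum_{\cN(\fb)\leq X}\abs{\gamma_{F,M}(\fb)}(X/\cN(\fb))^{d/(d+1)}\log X^{d-1}$; again by partial summation against \Cref{lem:gammabound} (noting $d/(d+1)>1/2$, so the weight $\cN(\fb)^{-d/(d+1)}$ makes the sum converge at the top) this is $O_d(2^{d\omega(M)}X^{1/2}\log(X)^{2^dd^2-1+d-1})$ — here the exponent of the log combines the $2^dd^2-1$ from \Cref{lem:gammabound} with the $d-1$ from \Cref{lem:LDTT}, landing at $2^dd^2+d-2\leq 2^d(d^2+1)-1$ as claimed.

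Finally I must handle the tail $X/\abs{\Delta_F}<\cN(\fb)\leq X$ where \Cref{lem:LDTT} does not apply; here I bound the inner count trivially by $O_d(X/\cN(\fb))$ (using e.g.\ the classical bound $\#\{\fc:\cN(\fc)\leq Y\}\ll_d Y$ valid for $Y\geq 1$, absorbing the $\abs{\Delta_F}$-dependence into $Y\geq 1$), so this contributes $\ll_d X\sum_{X/\abs{\Delta_F}<\cN(\fb)\leq X}\abs{\gamma_{F,M}(\fb)}\cN(\fb)^{-1}$, which by the same partial-summation argument as for the main-term tail is $O_d(2^{d\omega(M)}X^{1/2+\epsilon}\log(X)^{2^dd^2-1})$, within the claimed bound. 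The main obstacle is purely bookkeeping: tracking the exponent of $\log X$ through the two partial-summation estimates and verifying it does not exceed $2^d(d^2+1)-1$, and ensuring the $\abs{\Delta_F}$-dependence is genuinely absorbed by the hypothesis $X\geq\abs{\Delta_F}^{1/\epsilon}$ at every step (in particular in the trivial tail bound and in the Brauer--Siegel input), so that the final implied constant depends only on $d$.
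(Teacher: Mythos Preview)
Your approach is essentially the paper's: M\"obius-invert via $\gamma_{F,M}$, split the sum over $\fb$ at a threshold near $X^{1-\epsilon}$, apply \Cref{lem:LDTT} in the small range, complete the resulting partial sum to $L(\gamma_{F,M},1)$, and bound the large-$\fb$ tail directly. Your threshold $X/|\Delta_F|$ agrees with the paper's $B=X^{1-\epsilon}$ under the hypothesis $|\Delta_F|\leq X^\epsilon$, and the handling of the main term and the \Cref{lem:LDTT} error via partial summation against \Cref{lem:gammabound} is the same.

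There is one genuine gap. The ``classical bound'' $\#\{\fc\leq\cO_F:\cN(\fc)\leq Y\}\ll_d Y$, uniform in $F$, that you invoke for the tail does not hold: for fields in which many small primes split the count behaves like $\zeta_F^*(1)Y$, and $\zeta_F^*(1)$ is unbounded as $F$ ranges over degree-$d$ fields (already for quadratic $F$ it is $L(1,\chi)$, which is not $O(1)$). The uniform substitute, and what the paper actually uses, is the pointwise bound $r_F(n)\leq\tau(n)^d$ on the number of ideals of norm $n$, giving
\[
\#\{\fc\leq\cO_F:\cN(\fc)\leq Y\}\leq\sum_{n\leq Y}\tau(n)^d\ll_d Y(\log Y)^{2^d-1}.
\]
With this correction your tail estimate goes through carrying an extra $(\log X)^{2^d-1}$, and the combined log-exponent still fits inside the claimed $2^d(d^2+1)-1$; indeed this extra factor is precisely why the exponent in the statement is $2^d(d^2+1)-1$ rather than the $2^dd^2+d-2$ produced by the small-$\fb$ analysis alone.
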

\begin{proof}
    We use the identity $\mu^2(\cN(\fa))\one_M(\cN(\fa))=\sum_{\fb\mid\fa}\gamma_{F,M}(\fb)$ (see \Cref{lem:inverseconv}) to obtain that our sum equals
\begin{equation}\label{eq:prop_counting_ideals}
\sum_{\substack{\fa\leq \cO_F \\
\cN(\fa)\leq X}} \hspace{-5pt} \mu^2(\cN(\fa))\one_M(\cN(\fa))
= \sum_{\substack{\fb \leq \cO_F \\ \cN(\fb) \leq X}} \hspace{-5pt} \gamma_{F,M}(\fb)
\#\set{\fa\leq \cO_F~:~\cN(\fa)\leq \frac{X}{\cN(\fb)}}.
\end{equation}
Let $B:=X^{1-\epsilon}$.  We will split our summation into the regions $\mathcal{N}(\fb)\leq B$ and $\mathcal{N}(\fb)>B$.  Firstly, we consider the region where $\mathcal{N}(\fb)\leq B$.  Here our assumptions on the discriminant give $\abs{\Delta_F}\leq X/\mathcal{N}(\fb)$.  Hence we are able to apply \Cref{lem:LDTT} to the inner count in this region, obtaining
\begin{equation}\label{eq:firstmu2expression}
\begin{split}
&\sum_{\substack{\fb \leq \cO_F \\ \cN(\fb) \leq B}}\gamma_{F,M}(\fb)
\#\set{\fa\leq \cO_F~:~\cN(\fa)\leq \frac{X}{\cN(\fb)}}
\\=&
\zeta_F^*(1)X\sum_{\substack{\fb \leq \cO_F \\ \cN(\fb) \leq B}}\frac{\gamma_{F,M}(\fb)}{\cN(\fb)}
+O_d\braces{\sum_{\substack{\fb\leq \cO_F\\\cN(\fb)\leq B}}\abs{\gamma_{F,M}(\fb)}(X/\cN(\fb))^{\frac{d}{d+1}}\log(X)^{d-1}}.
\end{split}
\end{equation}
For the error term, we apply partial summation and \Cref{lem:gammabound} to obtain that
\[\sum_{\substack{\fb\leq \cO_F\\\cN(\fb)\leq B}}\frac{\abs{\gamma_{F,M}(\fb)}}{\cN(\fb)^{\frac{d}{d+1}}}\ll_d 2^{d\omega(M)}B^{\frac{1}{2}-\frac{d}{d+1}}\log(B)^{2^{d}d^2-1}.\]
Hence \eqref{eq:firstmu2expression} is equal to
\begin{equation}\label{eq:secondmu2expression}
\zeta_F^*(1)X\sum_{\substack{\fb \leq \cO_F \\ \cN(\fb) \leq B}}\frac{\gamma_{F,M}(\fb)}{\cN(\fb)}
+O_d\braces{2^{d\omega(M)}X^{\frac{1}{2}}\log(X)^{2^{d}d^2+d-2}}.
\end{equation}
\Cref{lem:gammabound} and an elementary tail estimate then show that this gives the proposed main term for \Cref{eq:prop_counting_ideals}, and acceptable error.

It then remains to show that the region $\mathcal{N}(\fb)>B$ in \eqref{eq:prop_counting_ideals} contributes only to our proposed error.  Note that the number of ideals $\fa$ with fixed norm $a$ is at most $\tau(a)^d$, see for example \cite{MR195803}*{p220}, so that in our region we have
\begin{align*}
    \#\set{\fa\leq \cO_F~:~\cN(\fa)\leq \frac{X}{\cN(\fb)}}
    &\leq \#\set{\fa\leq \cO_F~:~\cN(\fa)\leq X^\epsilon}
    \\&=\sum_{a\leq X^\epsilon}\#\set{\fa\leq \cO_F~:~\cN(\fa)=a}
    \\&\leq \sum_{a\leq X^\epsilon}\tau(a)^d
    \\&\ll X^\epsilon \log(X)^{2^d-1},
\end{align*}
where the final bound is \cite{MR2061214}*{p23 (1.80)}.  Hence, via \Cref{lem:gammabound} the contribution in the region $\mathcal{N}(\fb)>B$ is 
\begin{align*}
    \sum_{\substack{\fb \leq \cO_F \\ X^{1-\epsilon}< \cN(\fb) \leq X}}&\gamma_{F,M}(\fb)
    \#\set{\fa\leq \cO_F~:~\cN(\fa)\leq \frac{X}{\cN(\fb)}}
    \\&\ll X^{\epsilon}\log(X)^{2^d-1}\sum_{\substack{\fb \leq \cO_F \\ X^{1-\epsilon}< \cN(\fb) \leq X}}\abs{\gamma_{F,M}(\fb)}
    \\&\ll_d 2^{d\omega(M)}X^{1/2+\epsilon}\log(X)^{2^d(d^2+1)-1}.
\end{align*}
Thus the result holds.
\end{proof}

It is often useful, and indeed is in this article, to be able to also stipulate that the ideal represents a class in some specific subgroup of the ideal class group.  We now present this refined count, which is the main result of this section.

\begin{theorem}\label{thm:idealcounting}
    Let $d\in\ZZ_{>0}$ and $\epsilon>0$.  For every number field $F$ of degree $d$, integer $M\geq 1$, subgroup $H\leq \Cl_F$, and real number $X\geq\abs{\Delta_F}^{1/\epsilon}$, we have
    \begin{align*}
    & \#\set{\fa\leq \cO_F~:~
    \begin{array}{l}
        \cN(\fa)\textnormal{ squarefree}, \gcd(\cN(\fa),M)=1, \\
            \,[\fa]\in H \leq \Cl_F, \cN(\fa)\leq X
    \end{array}} \\
    &=  \frac{\#H}{\#\Cl_F}\zeta^*_F(1)L(\gamma_{F,M},1)X+
    O_{d,\epsilon}\braces{2^{d\omega(M)}X^{1/2+\epsilon} \log(X)^{2^{d}(d^2+1)-1}},
    \end{align*}
where the implied constant depends on the degree $d$,  but is independent of $F$, $M$, $H$, and $X$.
\end{theorem}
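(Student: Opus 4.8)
The plan is to deduce Theorem~\ref{thm:idealcounting} from Proposition~\ref{prop:sqfideals} by a standard orthogonality argument over the class group characters, being careful to keep the error term uniform in $F$, $M$, $H$, and $X$. First I would use character orthogonality to detect the condition $[\fa]\in H$: writing $H^\perp\leq\dual{\Cl_F}$ for the subgroup of characters trivial on $H$, we have
\[
\bbone_{[\fa]\in H}=\frac{\#H}{\#\Cl_F}\sum_{\chi\in H^\perp}\chi([\fa]),
\]
so the quantity to be estimated becomes
\[
\frac{\#H}{\#\Cl_F}\sum_{\chi\in H^\perp}\ \sum_{\substack{\fa\leq\cO_F\\\cN(\fa)\leq X}}\mu^2(\cN(\fa))\one_M(\cN(\fa))\,\chi([\fa]).
\]
The $\chi=\bbone$ term is exactly the sum handled by Proposition~\ref{prop:sqfideals}, contributing the main term $\tfrac{\#H}{\#\Cl_F}\zeta_F^*(1)L(\gamma_{F,M},1)X$ with error $O_d(2^{d\omega(M)}X^{1/2+\epsilon}\log(X)^{2^d(d^2+1)-1})$. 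It remains to show each nontrivial $\chi\in H^\perp$ contributes only to the error, and that the sum over such $\chi$ does not blow up.

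Next I would establish a version of Proposition~\ref{prop:sqfideals} twisted by a nontrivial character $\chi$ of $\Cl_F$, the point being that the main term vanishes. The identity $\mu^2(\cN(\fa))\one_M(\cN(\fa))=\sum_{\fb\mid\fa}\gamma_{F,M}(\fb)$ still applies, giving
\[
\sum_{\substack{\fa\leq\cO_F\\\cN(\fa)\leq X}}\mu^2(\cN(\fa))\one_M(\cN(\fa))\chi([\fa])
=\sum_{\substack{\fb\leq\cO_F\\\cN(\fb)\leq X}}\gamma_{F,M}(\fb)\chi([\fb])\sum_{\substack{\fc\leq\cO_F\\\cN(\fc)\leq X/\cN(\fb)}}\chi([\fc]).
\]
For the inner sum one invokes a $\chi$-twisted analogue of Lemma~\ref{lem:LDTT}: the count of ideals of bounded norm in a fixed ideal class, or more generally the character-weighted count, has no main term for nontrivial $\chi$ and the same power-saving error $O_d(\abs{\Delta_F}^{1/(d+1)}Y^{(d-1)/(d+1)}\log(Y)^{d-1})$ (this follows from the standard analysis of $L(s,\chi)$ for ray/ideal class characters via a contour shift, or from summing the per-class version of Lemma~\ref{lem:LDTT} against $\chi$, absorbing $\abs{\Delta_F}^{1/(d+1)}$ into $X^\epsilon$ by Brauer--Siegel exactly as in the proof of Lemma~\ref{lem:LDTT}). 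Then one repeats the proof of Proposition~\ref{prop:sqfideals} verbatim — splitting at $B=X^{1-\epsilon}$, bounding the $\fb$-sum using $\abs{\chi([\fb])}\leq 1$ and Lemma~\ref{lem:gammabound}, and handling the tail $\cN(\fb)>B$ with the divisor bound $\tau(a)^d$ — to conclude that the $\chi$-twisted sum is $O_d(2^{d\omega(M)}X^{1/2+\epsilon}\log(X)^{2^d(d^2+1)-1})$, with no main term.

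Finally I would sum over $\chi\in H^\perp$. The number of such characters is $\#H^\perp=\#\Cl_F/\#H\leq\#\Cl_F$, and after multiplying by the prefactor $\#H/\#\Cl_F$ the total error contribution from the nontrivial characters is at most
\[
\frac{\#H}{\#\Cl_F}\cdot\#\Cl_F\cdot O_d\!\left(2^{d\omega(M)}X^{1/2+\epsilon}\log(X)^{2^d(d^2+1)-1}\right)=O_d\!\left(\#H\cdot 2^{d\omega(M)}X^{1/2+\epsilon}\log(X)^{2^d(d^2+1)-1}\right),
\]
and since $\#H\leq\#\Cl_F\ll_\epsilon\abs{\Delta_F}^{1/2+\epsilon}\leq X^{(1/2+\epsilon)\epsilon}$ by the Minkowski/Brauer--Siegel bound on the class number together with the hypothesis $X\geq\abs{\Delta_F}^{1/\epsilon}$, this is absorbed into $O_{d,\epsilon}(2^{d\omega(M)}X^{1/2+\epsilon}\log(X)^{2^d(d^2+1)-1})$ after a harmless adjustment of $\epsilon$. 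The main obstacle — and the only place any real work is needed beyond Proposition~\ref{prop:sqfideals} — is securing the $\chi$-twisted form of Lemma~\ref{lem:LDTT} with an error term that is uniform in $F$ and in which the $\abs{\Delta_F}$-dependence can be traded for $X^\epsilon$; everything else is bookkeeping with the already-established bounds, together with the elementary observation that the class number is negligible in the stipulated range $X\geq\abs{\Delta_F}^{1/\epsilon}$.
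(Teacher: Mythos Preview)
Your approach is essentially the paper's: character orthogonality over $(\Cl_F/H)^\vee$, Proposition~\ref{prop:sqfideals} for the trivial character, and for nontrivial $\chi$ the convolution identity followed by a power-saving bound on $\sum_{\cN(\fa)\leq Y}\chi(\fa)$. Two simplifications you overlooked: first, since $\#H^\perp=\#\Cl_F/\#H$ exactly, the prefactor $\tfrac{\#H}{\#\Cl_F}$ cancels the number of characters, so the total nontrivial contribution is bounded by a \emph{single} per-character error and no Brauer--Siegel/class-number argument is needed; second, because there is no main term in the $\chi$-twisted inner sum, there is no need to split at $B=X^{1-\epsilon}$ or replay the full proof of Proposition~\ref{prop:sqfideals} --- the paper simply quotes the Landau--Polya--Vinogradov bound $\sum_{\cN(\fa)\leq Y}\chi(\fa)\ll_d\abs{\Delta_F}^{1/(d+1)}\log(\abs{\Delta_F})^{d}\,Y^{(d-1)/(d+1)}$ (which is precisely your ``$\chi$-twisted Lemma~\ref{lem:LDTT}'') and then applies Lemma~\ref{lem:gammabound} with partial summation.
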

\begin{proof}
We apply character orthogonality to obtain
\begin{equation*} \label{eqn:character_class_group}
\sum_{\substack{\fa\leq \cO_F \\ [\fa] \in H \\
\cN(\fa)\leq X}} \mu^2(\cN(\fa))\one_{M}(\cN(\fa))=\frac{\#H}{\#\Cl_F}\sum_{\chi\in(\Cl_F/H)^\vee}\sum_{\substack{\fa\leq \cO_F \\
\cN(\fa)\leq X}} \mu^2(\cN(\fa))\one_{M}(\cN(\fa))\chi(\fa).
\end{equation*}
For the trivial character, by Proposition \ref{prop:sqfideals} one obtains the stated main term and satisfactory error term. Thus it suffices to show that the nontrivial characters contribute to the error term. We use the identity $\mu^2(\cN(\fa))\one_{M}(\cN(\fa))=\sum_{\fb\mid\fa}\gamma_{F,M}(\fb)$ (see \Cref{lem:inverseconv}) to obtain that the inner sum  equals
\begin{align*}
\sum_{\substack{\fa\leq \cO_F \\
\cN(\fa)\leq X}} \mu^2(\cN(\fa))\one_{M}(\cN(\fa))\chi(\fa)
= & \sum_{\substack{\fb \leq \cO_F \\ \cN(\fb) \leq X}}\gamma_{F,M}(\fb)\sum_{\substack{\fa\leq \cO_F \\ \fb \mid \fa \\ 
\cN(\fa)\leq X}} \chi(\fa)\\
= & \sum_{\substack{\fb \leq \cO_F \\ \cN(\fb) \leq X}}\gamma_{F,M}(\fb)\chi(\fb)
\sum_{\substack{\fa\leq \cO_F \\ 
\cN(\fa)\leq X/\cN(\fb)}} \chi(\fa).
\end{align*}
Apply Landau--Polya--Vinogradov \cite{LPV}*{p479} to obtain
\[\sum_{\substack{\fa\leq \cO_F \\ \cN(\fa)\leq X/\cN(\fb)}} \chi(\fa)\ll_d \abs{\Delta_F}^{\frac{1}{d+1}}\log(\abs{\Delta_F})^{d}\braces{\frac{X}{\cN(\fb)}}^{\frac{d-1}{d+1}}\ll X^{\frac{d-1+\epsilon}{d+1}}\log(X)^d\frac{1}{\mathcal{N}(\fb)^{\frac{d-1}{d+1}}}.\]
Using \Cref{lem:gammabound} and partial summation, uniformly in $F$ and $M$ we have
\[\sum_{\substack{\fb\leq\cO_F\\\cN(\fb)\leq X}}\frac{\abs{\gamma_{F,M}(\fb)}}{\cN(\fb)^{\frac{d-1}{d+1}}}\ll _d 2^{d\omega(M)} 
X^{\frac{1}{2}-\frac{d-1}{d+1}}\log(X)^{2^{d}d^2-1}.\]
Substituting back in for the nontrivial characters gives the stated error term.
\end{proof}

\section{Average residue for cyclic cubic zeta functions} \label{sec:cubic}
We now prepare for the proof of Theorem \ref{thm:LTAvg}.  In fact, we will prove the following stronger version.
\begin{theorem}\label{thm:LTAvg_d}
    Let $D> 0$. Let $d \in \N$ be squarefree which is only divisible by primes that
    are $1 \bmod 3$ and with $0< d \leq (\log X)^D$. Then we have
    \[\sum_{\substack{F\textnormal{ cyclic cubic}\\\textnormal{unram. at }3\\ d \mid \Delta_F \\ \Delta_F\leq X}}\zeta_F^*(1)=\frac{7\pi}{26\sqrt{3}}\alpha\beta_d \cdot\frac{2^{\omega(d)}}{d}\cdot X^{1/2} + O_{D}(X^{1/2}\log(X)^{-D}),\]
    where 
    \begin{align*}
    \alpha&=\prod_{p\equiv 1\bmod 3} \left(1-\frac{1}{p}\right)^2
    \left(1 +\frac{2}{p} + \frac{1 +3p^{-1} + p^{-2} - p^{-6}}{p^2(1 - p^{-3})^2}\right)
    \prod_{p\equiv 2\bmod 3}\left({\frac{1+p^{-3}}{1-p^{-3}}}\right) \\
    \beta_d&=
    \prod_{p \mid d}\left(\frac{1 - p^{-1} + p^{-2}}{1 + p^{-1} + p^{-2}}\right)
    \left(1-\frac{1}{p}\right)^{-2}
    \left(1 +\frac{2}{p} + \frac{1 +3p^{-1} + p^{-2} - p^{-6}}{p^2(1 - p^{-3})^2}\right)^{-1}.
    \end{align*}
\end{theorem}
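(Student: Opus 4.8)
The plan is to reduce the sum over cyclic cubic fields to a character-sum problem over cubic Dirichlet characters, and then to compute a second moment of $L(1,\chi)$. Recall that a cyclic cubic field $F$ unramified at $3$ corresponds (two-to-one, since $\chi$ and $\bar\chi$ give the same field) to a primitive cubic Dirichlet character $\chi$ of conductor $\mathfrak{f}(\chi) = \Delta_F^{1/2}$, with the conductor supported only on primes $\equiv 1 \bmod 3$; the condition $d \mid \Delta_F$ becomes $d \mid \mathfrak{f}(\chi)$, i.e. every prime dividing $d$ ramifies in $F$. The factorisation $\zeta_F(s) = \zeta(s) L(s,\chi) L(s,\bar\chi)$ gives $\zeta_F^*(1) = \zeta^*(1) |L(1,\chi)|^2 = |L(1,\chi)|^2$. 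So the task becomes
\[
\sum_{\substack{\chi \text{ primitive cubic}\\ \text{cond.}\equiv 1(3),\ d\mid\mathfrak{f}(\chi)\\ \mathfrak{f}(\chi)^2\leq X}} |L(1,\chi)|^2
\]
up to the factor $1/2$ for the $\chi\leftrightarrow\bar\chi$ identification, and I would organise the cubic characters via the standard bijection with ideals of $\Z[\zeta_3]$: each such $\chi$ arises from a squarefree ideal $\mathfrak{n}$ of $\Z[\zeta_3]$ coprime to $3$, via the cubic residue symbol $\art{\cdot}{\mathfrak{n}}_3$, with $\mathfrak{f}(\chi) = \mathcal{N}(\mathfrak{n})$, and $d\mid\mathfrak{f}(\chi)$ translates to a divisibility/splitting condition on $\mathfrak{n}$ above the (split) primes dividing $d$. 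This is exactly the set-up where Theorem~\ref{thm:idealcounting} and the machinery of \S\ref{sec:squarefree} apply: the indicator of squarefree norm coprime to $3$ with prescribed behaviour at $d$ is (a twist of) the functions studied there.

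Next I would expand $|L(1,\chi)|^2 = \sum_{m,n} \chi(m)\bar\chi(n)/(mn)$, truncate each $L$-value at a suitable length (using a smoothed approximate functional equation or simply a Mellin/partial-summation truncation at height a small power of $X$, with the tail controlled by convexity bounds for $L(s,\chi)$ on $\Re s$ slightly less than $1$), and swap the order of summation so that the outer sum is over $\mathfrak{n}$ (equivalently over $F$) and the inner sum is $\sum_{\mathfrak{n}} \art{m/n}{\mathfrak{n}}_3 \cdot(\text{weight})$. The diagonal-type terms, where $m/n$ is a perfect cube (so the cubic symbol is trivially $1$ on the relevant $\mathfrak{n}$), produce the main term: summing the trivial-character contribution over $\mathfrak{n}$ of bounded norm with the squarefree/coprimality/ramification constraints is handled by Theorem~\ref{thm:idealcounting} applied to $F = \Q(\zeta_3)$, $d$ replaced by $3d$ (or the appropriate modulus encoding "$\equiv 1\bmod 3$ and ramified at $d$"), giving a term of size $c\cdot X^{1/2}$, and the arithmetic constant is assembled as an Euler product from the local densities in Theorem~\ref{thm:idealcounting} (via $L(\gamma_{\Q(\zeta_3),\cdot},1)$, cf. Lemma~\ref{lem:L_gamma_3}) together with the Dirichlet series $\sum_{m/n = \square^3} 1/(mn)$, which is a product of local factors at primes $\equiv 1,2\bmod 3$; matching this against $\frac{7\pi}{26\sqrt3}\alpha\beta_d\cdot 2^{\omega(d)}/d$ is a (lengthy but routine) Euler-factor bookkeeping exercise, where the $\beta_d$ and $2^{\omega(d)}/d$ factors come precisely from the modified local factors at $p\mid d$. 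The off-diagonal terms, where $m/n$ is not a cube, give a nontrivial cubic character sum over $\mathfrak{n}$; here one applies cubic reciprocity to flip the symbol and then bounds $\sum_{\mathcal{N}(\mathfrak{n})\leq Y} \art{\cdot}{\mathfrak{n}}_3$ by a Polya--Vinogradov / large-sieve-type estimate over $\Q(\zeta_3)$ — this is where the bilinear sieve over number fields \cite{MR4564991}*{Prop.~4.21} and the Siegel--Walfisz result \cite{KPSS}*{Lem.~4.8} enter, to save a power of $X$ and absorb everything into $O_D(X^{1/2}(\log X)^{-D})$; the uniformity in $d\leq(\log X)^D$ is maintained because $d$ only contributes finitely many local conditions and a factor $2^{\omega(d)}\ll_D (\log X)^{O(1)}$ to the error, which is swallowed by taking the truncation length a small power of $X$.

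The main obstacle is the off-diagonal estimate: getting a power saving (not merely a $(\log X)^{-C}$ saving by elementary means) in the cubic character sum $\sum_{\mathfrak{n}} \art{r}{\mathfrak{n}}_3$ uniformly over the shift $r$ and uniformly in the ramification modulus $d$, while the $L$-values are truncated at a length that is itself a small power of $X$ — this forces a genuinely bilinear treatment (Type I/Type II sums) rather than a direct character-sum bound, and this is precisely the role of \cite{MR4564991}*{Prop.~4.21}. A secondary technical point is justifying the interchange of the (infinite, conditionally convergent) $L(1,\chi)$ expansion with the sum over $\mathfrak{n}$: I would handle this by working with the truncated sums from the start, i.e. replace $L(1,\chi)$ by $\sum_{n\leq T}\chi(n)/n + (\text{error})$ with $T$ a small power of $X$, using the convexity bound to control the error for the generic $\chi$ and a Siegel-type bound for the possible exceptional characters (harmless here as there are $\ll X^\epsilon$ of conductor $\leq X^{1/2}$), and only then open the square and swap. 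Once the diagonal main term and off-diagonal error are in hand, collecting constants and comparing with the claimed Euler products for $\alpha$ and $\beta_d$ completes the proof; I would relegate the constant computation to a lemma of its own to keep the argument readable.
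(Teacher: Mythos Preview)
Your proposal is correct and follows essentially the same route as the paper: parametrise cubic characters by squarefree ideals of $\Z[\zeta_3]$, truncate $L(1,\chi)$, open the square, extract the main term from the diagonal $n_1 n_2^2 \in \Q^{\times 3}$ via the squarefree-ideal count (Proposition~\ref{prop:sqfideals}/Lemma~\ref{lem:L_gamma_3}), and control the off-diagonal by splitting into large $n_i$ (bilinear sieve, Proposition~\ref{prop:bilinearsieve}) versus small $n_i$ (Siegel--Walfisz, Lemma~\ref{lem:SW}). The paper's execution is slightly simpler than you anticipate --- the truncation is just Polya--Vinogradov (Lemma~\ref{lem:Ltruncated}), so no convexity bounds, approximate functional equation, or exceptional-character argument are needed, and the off-diagonal only yields an arbitrary $(\log X)^{-C}$ saving (not a power saving), which is exactly what the statement requires.
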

Since \Cref{thm:LTAvg} is the $d=1$ case of \Cref{thm:LTAvg_d}, it is sufficient to prove this more flexible result.

The proof will proceed by expanding the residue $\zeta_F(1)^*$ as a product of Dirichlet series, truncating these and expanding the product to obtain a character sum in $3$ variables.  We then use a combination of bilinear sieve and Siegel--Walfisz type results to exploit the oscillation of the characters and isolate the main term, and then evaluate this by standard Dirichlet series techniques.  We will begin by gathering the necessary preliminaries for our approach, before going on to prove \Cref{thm:LTAvg_d}.

\subsection{Preliminaries}
\subsubsection{Cubic characters}
We first re-express the Dirichlet characters associated to cubic fields as cubic residue symbols.
\begin{lemma}\label{lem:characterchange}
    Let $q$ be a squarefree integer coprime to $3$.
    Then the primitive cubic Dirichlet characters of conductor $q$ 
    are exactly the characters of the form $\art{\cdot}{I}_3$ for an ideal
    $I \subset \Z[\zeta_3]$  of norm $q$.  Moreover, the discriminant of the associated abelian extension
    is $\cN(I)^2 = q^2$.% \dan{Presumably $\cN(I)^2 = q^2$ by definition?}
\end{lemma}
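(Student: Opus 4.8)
The plan is to dispose of a degenerate case, match cardinalities on the two sides, check that $I \mapsto \art{\cdot}{I}_3$ realises the asserted bijection, and finally read off the discriminant from the conductor--discriminant formula. If $q$ has a prime factor $p \equiv 2 \bmod 3$, then $(\Z/q\Z)^\times$ has no quotient of order $3$ supported at $p$, so there is no primitive cubic character of conductor $q$; at the same time $p$ is inert in $\Z[\zeta_3]$, of norm $p^2$, so $\Z[\zeta_3]$ has no ideal of norm $q$, and the statement is vacuously true. So assume $q = p_1 \cdots p_k$ with each $p_i \equiv 1 \bmod 3$. Each $p_i$ splits as $\mathfrak{p}_i\bar{\mathfrak{p}}_i$ in $\Z[\zeta_3]$ with $\mathfrak{p}_i \neq \bar{\mathfrak{p}}_i$ (it is unramified, as $p_i \neq 3$), so the ideals of $\Z[\zeta_3]$ of norm $q$ are precisely the $2^k$ products $\prod_i \mathfrak{q}_i$ with $\mathfrak{q}_i \in \{\mathfrak{p}_i, \bar{\mathfrak{p}}_i\}$. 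On the other hand, writing $(\Z/q\Z)^\times \cong \prod_i (\Z/p_i\Z)^\times$ with each factor cyclic of order divisible by $3$, the primitive cubic Dirichlet characters of conductor $q$ are exactly those nontrivial on every factor, so there are again $2^k$ of them.

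Next I would check that, for an ideal $I = \prod_i \mathfrak{q}_i$ of norm $q$, the map $a \mapsto \art{a}{I}_3$ is a primitive cubic character of conductor $q$. Since $I$ is coprime to $3$ and to $\bar I$, the reduction $\Z \to \Z[\zeta_3]/I$ is surjective with kernel $q\Z$ (by the Chinese Remainder Theorem $\Z[\zeta_3]/I \cong \prod_i \mathbb{F}_{p_i} \cong \Z/q\Z$); as $\art{\cdot}{I}_3$ depends only on the residue class modulo $I$ and is valued in the cube roots of unity, it descends to a cubic character of $(\Z/q\Z)^\times$ whose restriction to the $p_i$-factor is $\art{\cdot}{\mathfrak{q}_i}_3$. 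This restriction is a nontrivial cubic character of $(\Z/p_i\Z)^\times$ (the cubic residue symbol surjects onto the cube roots of unity), so the conductor is exactly $q$. Moreover, applying the nontrivial automorphism of $\Q(\zeta_3)$ to the congruence defining the symbol shows $\art{a}{\bar{\mathfrak{p}}_i}_3 = \overline{\art{a}{\mathfrak{p}_i}_3}$ for $a \in \Z$; since a nontrivial cubic character takes a non-real value, $\art{\cdot}{\mathfrak{p}_i}_3$ and $\art{\cdot}{\bar{\mathfrak{p}}_i}_3$ are the two distinct nontrivial cubic characters of $(\Z/p_i\Z)^\times$. Hence the $p_i$-component of $\art{\cdot}{I}_3$ recovers $\mathfrak{q}_i$, so the map $I \mapsto \art{\cdot}{I}_3$ is injective; combined with the count above, it is a bijection onto the primitive cubic characters of conductor $q$.

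Finally, if $K/\Q$ is the cyclic cubic field cut out by $\chi = \art{\cdot}{I}_3$, its $\Gal(K/\Q)$-characters are $1, \chi, \bar\chi$, of conductors $1, q, q$, so the conductor--discriminant formula gives $\Delta_K = q^2 = \cN(I)^2$. I expect the only point needing genuine care to be the bookkeeping around the cubic residue symbol viewed as a Dirichlet character: verifying that it is primitive of conductor exactly $q$ (i.e.\ nontrivial on each local factor) and that $\art{\cdot}{\mathfrak{p}_i}_3$ and $\art{\cdot}{\bar{\mathfrak{p}}_i}_3$ are genuinely distinct modulo $p_i$; everything else is the cardinality match and the standard conductor--discriminant formula.
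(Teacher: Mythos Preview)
Your proof is correct and follows essentially the same approach as the paper: both reduce to the prime components (the paper via ``by multiplicativity of characters we can reduce to the case where $q$ is prime'', you via the CRT decomposition and the $2^k$ cardinality match), identify primitive cubic characters mod a split prime $p\equiv 1\bmod 3$ with the two cubic residue symbols $\art{\cdot}{\mathfrak p}_3$, $\art{\cdot}{\bar{\mathfrak p}}_3$, and then invoke the conductor--discriminant formula. Your version simply fills in the details the paper's three-line sketch omits, in particular the verification that $\art{\cdot}{I}_3$ has conductor exactly $q$ and that $I\mapsto\art{\cdot}{I}_3$ is injective.
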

\begin{proof}
    By multiplicativity of characters, we can reduce to the case where $q$ is prime, hence $q \equiv 1 \bmod 3$. 
    Such Dirichlet characters exactly correspond to homomorphisms
    $\FF_q^\times \to \CC^\times$ of order $3$. One then takes $I$
    to be an ideal of $\Z[\zeta_3]$ above $q$, and observes that since $q\equiv 1\bmod 3$ it must split in $\QQ(\zeta_3)$ and hence
    $\Z[\zeta_3]/I \cong \FF_q$. 
    The last part follows from the conductor-discriminant formula.
\end{proof}

We will approach the averaging problem by working with Dirichlet $L$-series.

\begin{lemma}\label{lem:Ltruncated}
    Let $\chi$ be a primitive Dirichlet character of conductor $q$. Then for $X\geq 1$
    \[L(\chi,1)=\sum_{n\leq X}\frac{\chi(n)}{n}+O\braces{\frac{\sqrt{q}\log(q)}{X}}.\]
\end{lemma}
\begin{proof}
    Apply partial summation and Polya-Vinoradov to the tail.
\end{proof}

\subsubsection{Bilinear sieve}
We shall deal with large $n$ using the following bilinear sieve result.
\begin{proposition}\label{prop:bilinearsieve}  
    Let $a_I,b_n$ be complex numbers of absolute value at most $1$ as $I$ ranges over ideals of $\ZZ[\zeta_3]$ and $n$ ranges over positive integers.  There exists $\delta>0$ such that for all $A>0$ and $X\gg_A1$ we have 
    \[\sum_{\substack{\log(X)^A\leq n\leq X}}
        \sum_{\substack{I\leq \ZZ[\zeta_3]\\\cN(I)\textnormal{ sqfree}\\\gcd(\mathcal{N}(I),3)=1\\\cN(I)\leq X}}
        a_Ib_n\frac{1}{n}
        \art{n}{I}_3
      \ll X\log(X)^{\delta^{-1}-\delta A}.\]
\end{proposition}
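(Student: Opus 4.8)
The sum to be estimated is a bilinear form in the variables $n$ (a positive integer) and $I$ (an ideal of $\mathbf{Z}[\zeta_3]$), with the oscillating kernel being the cubic residue symbol $\art{n}{I}_3$. The natural approach is to interpret this as a bilinear character sum over $\mathbf{Q}(\zeta_3)$ and apply an existing bilinear (or ``Type II'') sieve result for number fields. Concretely, I would first use the cubic reciprocity law to replace $\art{n}{I}_3$ by a symbol of the shape $\art{I}{(n)}_3$ (up to an easily controlled root of unity / correction factor supported on primes dividing $3$), so that for each fixed $n$ the inner sum over $I$ becomes a character sum with a character depending on $n$. This matches the format of a bilinear sieve estimate where one input sequence $(a_I)$ is arbitrary and bounded, and one applies square-root cancellation in the ``long'' variable. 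In the excerpt this machinery is already available: \cite{MR4564991}*{Prop.~4.21} provides exactly a bilinear sieve over number fields, and \cite{KPSS}*{Lem.~4.8} provides a Siegel--Walfisz input to handle characters of small conductor.

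**Key steps.** First, normalise: absorb the $1/n$ weight by partial summation (or dyadic decomposition in $n$), so that it suffices to bound $\sum_{N < n \le 2N}\sum_{\cN(I)\le X} a_I b_n \art{n}{I}_3$ by $X \log(X)^{\delta^{-1} - \delta A}/N$ uniformly for $\log(X)^A \le N \le X$; a dyadic sum over the $O(\log X)$ ranges of $N$ then recovers the claimed bound with a slightly adjusted $\delta$. Second, split off the primes dividing $3$ and the (square-full or $\gcd$-with-$I$) degenerate terms, which are negligible. Third, apply cubic reciprocity to pass from $\art{n}{I}_3$ to a product over rational primes dividing $n$ of cubic symbols evaluated at $I$, i.e. realise the sum as a bilinear form $\sum_n \sum_I a_I \tilde b_n \psi_n(I)$ where $\psi_n$ is a primitive (or nearly primitive) cubic Hecke character mod $(n)$ whenever $n$ is squarefree and coprime to $3$. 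Fourth, feed this into \cite{MR4564991}*{Prop.~4.21}: the large-sieve/bilinear input gives cancellation of size $(N X)^{1/2+o(1)}$ or so in the bilinear form provided $N$ is not too small, which, combined with the constraint $N \ge \log(X)^A$, yields a power-of-log saving $\log(X)^{-\delta A}$ after one balances $N$ against $X$. Fifth, for the remaining range where the sieve input is weakest — characters of very small conductor — invoke the Siegel--Walfisz-type estimate \cite{KPSS}*{Lem.~4.8} to get a (non-effective, but that is fine here) power-saving bound. Finally, collect the $\log$ exponents; the constant $\delta^{-1}$ appears as the baseline loss from the number-field large sieve and Siegel--Walfisz thresholds, and $\delta A$ is the genuine saving, linear in $A$.

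**Main obstacle.** The chief technical difficulty is bookkeeping the non-squarefree and imprimitive contributions: the residue symbol $\art{n}{I}_3$ is only well-behaved (primitive as a Hecke character) when $n$ is squarefree and coprime to $3$, and the condition $\cN(I)$ squarefree does not by itself force $(n,I)$ to behave well — one must separate the ``diagonal'' contribution where $I$ shares prime factors with $(n)$, show it is small, and ensure the reciprocity step does not introduce uncontrolled root-of-unity factors that destroy cancellation. A secondary subtlety is the uniformity: Proposition~\ref{prop:bilinearsieve} must hold with the implied constant depending only on $A$ and with $\delta$ absolute, so one must check that the cited bilinear sieve and Siegel--Walfisz results are uniform in the relevant parameters (conductor, length). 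Once the degenerate terms are excised and reciprocity applied cleanly, the estimate should follow essentially formally from \cite{MR4564991}*{Prop.~4.21} together with \cite{KPSS}*{Lem.~4.8}, with the $\log$-power accounting being routine.
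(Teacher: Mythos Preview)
Your core strategy---apply the bilinear sieve \cite{MR4564991}*{Prop.~4.21} and then remove the weight $1/n$ by partial summation---matches the paper exactly. However, you are adding machinery that the paper does not use and that is not needed here.

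The paper's proof is shorter and more direct. It simply verifies that the map $(I,n)\mapsto \art{n}{I}_3$ (set to $0$ when $3\mid\cN(I)$) is an \emph{oscillating bilinear character} in the sense of \cite{MR4564991}*{Def.~4.19}. This requires checking that the symbol is a character of controlled conductor in \emph{each} variable separately: for fixed $I$ with $\cN(I)$ squarefree and coprime to $3$, $\art{\cdot}{I}_3$ is a Dirichlet character of conductor $\cN(I)$ (this is Lemma~\ref{lem:characterchange}); for fixed cubefree $n$, class field theory identifies $\art{n}{\cdot}_3$ as a non-principal Hecke character for $\Gal(\QQ(\zeta_3,\sqrt[3]{n})/\QQ(\zeta_3))$ of conductor a bounded power of $3n$. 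Once this hypothesis is checked, Prop.~4.21 gives the bound $Z(X,T)\ll TX\log(XT)^{\delta^{-1}}\max\{T^{-\delta},X^{-\delta}\}$ as a black box, and partial summation in $n$ over $[\log(X)^A,X]$ finishes. There is no explicit reciprocity manipulation, no separate treatment of degenerate or non-squarefree terms, and no balancing of ranges: the black box already absorbs all of this. Your ``main obstacle'' is therefore not an obstacle at all.

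More importantly, your step invoking the Siegel--Walfisz result \cite{KPSS}*{Lem.~4.8} is misplaced. That lemma is used in the paper, but \emph{not} in the proof of this proposition: it handles the complementary range $n<\log(X)^A$ (Lemma~\ref{lem:SW}), which lies entirely outside the sum in Proposition~\ref{prop:bilinearsieve}. Within the stated range $n\geq\log(X)^A$, the bilinear sieve alone already delivers the saving $\log(X)^{-\delta A}$, and no small-conductor input is required.
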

\begin{proof}
    We claim that the function
    \begin{align*}
    \eta:I_{\QQ(\zeta_3)}\times I_\QQ&\to \CC^\times\cup\set{0}\\
    \eta(I,n)&=\begin{cases}
        0&\text{if }3\mid \cN(I),\\
        \art{n}{I}_3&\text{else,}\\
    \end{cases}\
    \end{align*}
    is an $(A,q)$-oscillating bilinear character for some constants $A,q$ in the sense of \cite{MR4564991}*{Def.~4.19}, so that we can appeal to the bilinear sieve estimate there.  Indeed, as in \Cref{lem:characterchange}, for $I$ such that $\cN(I)$ is squarefree and coprime to $3$, the function $\art{\cdot}{I}_3$, is a Dirichlet character of conductor $\cN(I)$.  Moreover, via class field theory, for cubefree $n$ the function $\art{n}{\cdot}_3$ is a non-principal Hecke character on $\gal(\QQ(\sqrt[3]{n},\zeta_3)/\QQ(\zeta_3))$ of conductor at most a fixed (independent of $n$) power of $3n$

    Now appealing to the bilinear sieve result in \cite{MR4564991}*{Prop.~4.21}, there is a constant $\delta>0$ such that for all $X,T\geq 2$:
    \[Z(X,T):=\sum_{\substack{I\leq \ZZ[\zeta_3]\\\cN(I)\leq X\\\gcd(\cN(I),3)=1\\\cN(I)\textnormal{ sq. free}}}
    \sum_{\substack{1\leq n\leq T}}
    a_Ib_n
    \art{n}{I}_3\ll 
    TX\log(XT)^{\delta^{-1}}\max\set{T^{-\delta},X^{-\delta}}.\]
    Hence we apply partial summation to the sum over $n$ in the lemma statement to obtain
    \begin{align*}
    \sum_{\substack{\log(X)^A\leq n\leq X}}
        &\sum_{\substack{I\leq \ZZ[\zeta_3]\\\gcd(\cN(I),3)=1\\\cN(I)\leq X}}
        a_Ib_n\frac{1}{n}
        \art{n}{I}_3
        \\&=\frac{1}{X}Z(X,X)-\frac{1}{\log(X)^A}Z(X,\log(X)^A)-\int_{\log(X)^A}^{X}\frac{Z(X,t)}{t^2}dt
        \\&\ll
        X^{1-\delta}\log(X)^{\delta^{-1}} +
        X\log(X)^{\delta^{-1}-\delta A} +
        X\log(X)^{\delta^{-1}}\int_{\log(X)^A}^X \frac{1}{t^{1+\delta}}dt
        \\&\ll
        X\log(X)^{\delta^{-1}-\delta A}. \qedhere
    \end{align*}
\end{proof}

\subsubsection{Siegel-Walfisz/LSD}
To deal with small $n$, we will make use of a Siegel--Walfisz-type result proved using the LSD method.

\begin{lemma}[\cite{KPSS}*{Lemma 4.8}]\label{lem:KPSS_LSD}
Let $C>0$.  For every $X>1$, every $1 \leq \Delta \leq X^C$, and all non-principal Hecke characters $\chi: I_{\Z[\zeta_3]} \to \CC$ with modulus $\fm$ and conductor $f_{\chi}\mid \fm$ such that $\mathcal{N}(f_{\chi}) \omega(\mathcal{N}(\mathfrak{m})) \leq (\log X)^C$ we have the bound
\[
\sum_{\substack{I \in I_{\Z[\zeta_3]} \\ \mathcal{N}(I) \leq X}} \mu^2(\mathcal{N}(I)\cdot \Delta) \chi(I) \ll_C X (\log X)^{-C}
\]
where the implied constant depends only on $C$.
\end{lemma}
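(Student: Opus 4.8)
The plan is to run a Siegel--Walfisz-type estimate by the Landau--Selberg--Delange method applied to the $\QQ$-multiplicative function $I \mapsto \mu^2(\cN(I)\Delta)\chi(I)$. Since the sum is empty unless $\Delta$ is squarefree, we may assume $\mu(\Delta)\neq 0$; then $\mu^2(\cN(I)\Delta) = \mu^2(\cN(I))\one_\Delta(\cN(I))$, where $\one_\Delta$ is the indicator of integers coprime to $\Delta$, so it suffices to bound $S(X) := \sum_{\cN(I)\le X}\mu^2(\cN(I))\one_\Delta(\cN(I))\chi(I)$. I would express the associated Dirichlet series as a Hecke $L$-function times a factor convergent beyond $\Re s = 1/2$, apply truncated Perron's formula, and then push the contour a short distance past $\Re s = 1$. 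Because $\chi$ is non-principal the $L$-function is entire, so there is no main term, and the length of the shift---hence the quality of the saving---is governed only by the width of the zero-free region, which under the hypotheses is $\asymp 1/\log\log X$.

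For the factorisation, $\QQ$-multiplicativity gives $D(s) := \sum_I \mu^2(\cN(I))\one_\Delta(\cN(I))\chi(I)\cN(I)^{-s} = \prod_p D_p(s)$, and the local factors are computed exactly as in the proofs of \Cref{lem:gammaexplicit} and \Cref{lem:L_gamma_3}: an inert prime $p\equiv 2\bmod 3$ gives $D_p = 1$; a split prime $p\equiv 1\bmod 3$ coprime to $3\Delta\cN(\fm)$ gives $D_p(s) = 1 + (\chi(\fp_1)+\chi(\fp_2))p^{-s}$; and the finitely many primes dividing $3\Delta\cN(\fm)$ are removed or modified. Dividing through by the Euler factors of the primitive Hecke $L$-function $L(s,\chi)$ attached to $\chi$, one obtains $D(s) = L(s,\chi)U(s)$, where $U(s) = \prod_p U_p(s)$ with $U_p(s) = 1 + O(\cN(\fp)^{-2\Re s})$ at every good prime, and where $U$ also absorbs the finitely many Euler factors at primes dividing $\fm$; thus $U$ converges absolutely for $\Re s > 1/2$. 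As $L(s,\chi)$ is entire ($\chi$ being non-principal, and, in the application, of finite order), $D(s)$ is holomorphic for $\Re s > 1/2$: there is no pole and no main term.

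The point needing genuine care is the uniformity of $U$ in $\Delta$ and $\fm$, since $\Delta$ may be as large as $X^C$. On the strip $\Re s \ge 1 - \eta$ with $\eta = c/\log\log X$ for a small $c = c(C) > 0$, I claim $|U(s)| \ll_C (\log\log X)^{O(1)}$. The good-prime part of the product is $O(1)$; the remaining factors number at most $O(\omega(\Delta\cN(\fm)))$, each of the shape $1 + O(p^{-1+\eta})$, so it is enough to bound $\sum_{p\mid\Delta\cN(\fm)} p^{-1+\eta}$. Since $p^{-1+\eta}/\log p$ is decreasing in $p$, this sum is maximised when the primes involved are the smallest possible; the constraints $\Delta \le X^C$ and $\omega(\cN(\fm)) \le (\log X)^C$ then force those primes to be $\le (\log X)^{O(1)}$, on which range $p^\eta = e^{\eta\log p} = O_C(1)$, and Mertens's theorem gives $\sum p^{-1+\eta} \ll_C \log\log\log X$. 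This is exactly where the hypotheses on $f_\chi$, $\fm$ and $\Delta$ are used, and it is the crux of the argument; everything else is standard.

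Finally, apply truncated Perron with cut-off $T = (\log X)^{C'}$ for a suitable $C' = C'(C)$, so that the truncation error is $\ll X(\log X)^{O(1)}/T \ll X(\log X)^{-C}$, and shift the contour from $\Re s = 1 + 1/\log X$ to $\Re s = 1 - \eta$. No residue is collected, as $D(s)$ is holomorphic throughout; in particular the possible exceptional real zero of $L(s,\chi)$ (when $\chi$ is quadratic) causes no trouble, because---unlike in the classical Siegel--Walfisz theorem---the integrand here involves $L(s,\chi)$ rather than $L'(s,\chi)/L(s,\chi)$, so no term $X^{\beta_1}/\beta_1$ is produced and Siegel's lower bound for the exceptional zero is not needed. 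On the shifted vertical segment and on the two horizontal segments one uses the standard bounds $L(s,\chi) \ll (\log(\cN(f_\chi)(|\Im s| + 3)))^{O(1)}$, valid throughout the zero-free region $\Re s \ge 1 - c(C)/\log(\cN(f_\chi)(|\Im s| + 3))$---which, since $\cN(f_\chi) \le (\log X)^C$ and $|\Im s| \le T$, contains the whole contour---together with the bound on $U$ from the previous paragraph. This makes the shifted integral $\ll X^{1-\eta}(\log\log X)^{O(1)}$, and since $X^{-\eta} = \exp(-c(C)\log X/\log\log X)$ decays faster than any fixed power of $\log X$ once $X$ is large in terms of $C$, one obtains $S(X) \ll_C X(\log X)^{-C}$ as claimed. (Alternatively one could quote a general Landau--Selberg--Delange theorem over number fields; the above records why no exceptional-zero input is really required here.)
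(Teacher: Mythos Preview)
The paper does not prove this lemma: it is quoted verbatim from \cite{KPSS}*{Lemma~4.8} and used as a black box, so there is no in-paper argument to compare against. That said, your sketch is a sound realisation of the LSD method (indeed, the paper's own label \texttt{lem:KPSS\_LSD} advertises this is what lies behind the cited result), and the key uniformity step---bounding $\prod_{p\mid \Delta\cN(\fm)}|U_p(1-\eta+it)|$ by reducing to the extremal case where the bad primes are the smallest possible, so that $p^{\eta}=O_C(1)$ and Mertens gives $\sum p^{-1+\eta}\ll_C\log\log\log X$---is exactly the place where the hypotheses $\Delta\le X^C$ and $\cN(f_\chi)\omega(\cN(\fm))\le(\log X)^C$ enter, and you handle it correctly.

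Two minor remarks. First, your observation that no Siegel-type input is needed because the integrand contains $L(s,\chi)$ rather than $L'/L$ is correct and worth keeping: the upper bound $|L(s,\chi)|\ll\log(\cN(f_\chi)(|t|+3))$ on the region $\sigma\ge 1-c/\log(\cN(f_\chi)(|t|+3))$ follows from partial summation and P\'olya--Vinogradov and requires no zero-free region at all, so the exceptional zero is genuinely irrelevant here. Second, when you write that inert primes give $D_p=1$, note that this forces the corresponding $U_p(s)=1-\chi^*(\fp)p^{-2s}$, i.e.\ a good factor; the only $1+O(p^{-s})$ factors in $U$ come from split primes dividing $\Delta\cN(\fm)$, which is consistent with your count but worth stating explicitly. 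With these points in hand your contour-shift argument goes through as written.
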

We apply this as follows.
\begin{lemma}\label{lem:SW}
    Let $C>0$.  Then for every integer $n$ such that $n\leq \log(X)^C$ and $n$ is not a perfect cube, and every integer $1\leq d\leq \log(X)^C$ which is squarefree and a product of primes which are $1\bmod 3$, we have
    \[\sum_{\substack{I\leq \ZZ[\zeta_3]\\\gcd(\cN(I),3d)=d\\\cN(I)\leq X}}\mu^2(\cN(I))\art{n}{I}_3\ll_C X\log(X)^{-C},\]
    where the implied constant is uniform in $d,n,$ and $X$.
\end{lemma}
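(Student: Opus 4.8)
The plan is to reduce the claimed estimate to the Siegel--Walfisz-type input of Lemma~\ref{lem:KPSS_LSD}, the only subtlety being that the sum in Lemma~\ref{lem:SW} is restricted to ideals $I$ with $\gcd(\cN(I),3d)=d$, whereas Lemma~\ref{lem:KPSS_LSD} is a statement about \emph{all} ideals of bounded norm twisted by a Hecke character. First I would observe that since $\art{n}{\cdot}_3$ depends only on the ideal $I$ and $n$ is coprime to the relevant moduli (we may assume $\gcd(n,3d)=1$, as otherwise the symbol or the condition forces a trivial contribution after removing a bounded factor), the function $I\mapsto \mu^2(\cN(I))\art{n}{I}_3$ is itself, up to sign, the value of a Hecke character on the ideals of squarefree norm. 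Concretely, because $d$ is squarefree and a product of primes $\equiv 1\bmod 3$, each $p\mid d$ splits in $\Z[\zeta_3]$ as $\fp_1\fp_2$; the condition $d\mid\cN(I)$, $\gcd(\cN(I),3d/d)=1$ on a squarefree-norm ideal means that for each $p\mid d$ exactly one of $\fp_1,\fp_2$ divides $I$. Writing $I=\fd I'$ with $\fd$ one of the $2^{\omega(d)}$ ideals of norm $d$ of this shape and $I'$ coprime to $3d$, the sum decomposes as
\[
\sum_{\substack{I\leq \Z[\zeta_3]\\ \gcd(\cN(I),3d)=d\\ \cN(I)\leq X}}\mu^2(\cN(I))\art{n}{I}_3
= \sum_{\substack{\fd\\ \cN(\fd)=d,\ \fd\mid (d)}}\art{n}{\fd}_3\sum_{\substack{I'\leq \Z[\zeta_3]\\ \gcd(\cN(I'),3d)=1\\ \cN(I')\leq X/d}}\mu^2(\cN(I'))\mu^2(d)\,\art{n}{I'}_3 .
\]

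Next I would handle each inner sum by applying Lemma~\ref{lem:KPSS_LSD} with the Hecke character $\chi=\art{n}{\cdot}_3$, whose modulus I take to be $\fm = 3d\cdot(\text{conductor of }\art{n}{\cdot}_3)$ and whose conductor $f_\chi$ divides a fixed power of $3n$ by class field theory (as recorded in the proof of Proposition~\ref{prop:bilinearsieve}); this character is non-principal precisely because $n$ is not a perfect cube. The coprimality-to-$3d$ restriction on the inner sum is exactly absorbed by replacing $\mu^2(\cN(I'))$ by $\mu^2(\cN(I')\cdot 3d)$ — since $3d$ is squarefree, $\mu^2(\cN(I')\cdot 3d)=\mu^2(\cN(I'))\one_{3d}(\cN(I'))$ — which is the shape of the sum in Lemma~\ref{lem:KPSS_LSD}. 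One checks the hypotheses: $\cN(f_\chi)\,\omega(\cN(\fm))\leq \cN(f_\chi)(\omega(3d)+\omega(n)+O(1)) \ll n^{O(1)}(\log X)^{O(1)}\cdot\log(\log X)^C \ll (\log X)^{C'}$ using $n,d\leq (\log X)^C$, so Lemma~\ref{lem:KPSS_LSD} applies with some constant $C'$ in place of $C$, and with $X/d$ in place of $X$ (note $X/d \geq X/(\log X)^C$ affects logarithmic powers only), giving each inner sum $\ll_{C'} X(\log X)^{-C'}$.

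Finally I would sum over the at most $2^{\omega(d)}\leq 2^{\log d}\ll (\log X)^{C\log 2}$ choices of $\fd$; absorbing this polylogarithmic loss and renaming constants, and running the argument with a sufficiently large auxiliary constant in place of $C$ at the input stage so that the final exponent of $\log X$ is $\leq -C$, yields the stated bound $\ll_C X\log(X)^{-C}$ uniformly in $d,n,X$. The main obstacle is purely bookkeeping rather than conceptual: one must verify that the various auxiliary quantities — the conductor of $\art{n}{\cdot}_3$, the number of divisor ideals $\fd$, and the modulus $\fm$ — remain within the polylogarithmic-in-$X$ range demanded by Lemma~\ref{lem:KPSS_LSD} after the reduction, and that the ``$\mu^2$ of a product with a squarefree modulus'' identity correctly encodes the coprimality condition $\gcd(\cN(I),3d)=d$; once this is set up, the estimate follows immediately from the cited lemma.
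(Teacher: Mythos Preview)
Your proposal is correct and follows essentially the same route as the paper: decompose $I=\fd I'$ over the $2^{\omega(d)}$ ideals $\fd$ of norm $d$, rewrite the coprimality constraint on the inner sum as $\mu^2(\cN(I')\cdot 3d)$, and apply Lemma~\ref{lem:KPSS_LSD} to the non-principal Hecke character $\art{n}{\cdot}_3$ after checking its conductor and modulus lie in the required polylogarithmic range, then absorb the $2^{\omega(d)}$ loss by enlarging $C$. The only cosmetic difference is that the paper keeps the modulus supported on $3n$ alone (letting the $\Delta=3d$ in Lemma~\ref{lem:KPSS_LSD} carry the coprimality), whereas you also fold $3d$ into $\fm$; this is harmless redundancy.
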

\begin{proof}
    Firstly, note that
    \begin{align*}
    \sum_{\substack{I\leq \ZZ[\zeta_3]\\\gcd(\cN(I),3d)=d\\\cN(I)\leq X}}\mu^2(\cN(I))\art{n}{I}_3
    &=\sum_{\substack{J\leq \ZZ[\zeta_3]\\\cN(J)=d\\\cN(J)\leq X}} \art{n}{J}_3
      \sum_{\substack{I\leq \ZZ[\zeta_3]\\\gcd(\cN(I),3d)=1\\\cN(I)\leq X/d}}\mu^2(\cN(I))\art{n}{I}_3
    \\&\ll2^{\omega(d)}
      \sum_{\substack{I\leq \ZZ[\zeta_3]\\\gcd(\cN(I),3d)=1\\\cN(I)\leq X/d}}\mu^2(\cN(I))\art{n}{I}_3
    \\&\ll_{\epsilon,C}\log(X)^{\epsilon}
      \sum_{\substack{I\leq \ZZ[\zeta_3]\\\gcd(\cN(I),3d)=1\\\cN(I)\leq X/d}}\mu^2(\cN(I))\art{n}{I}_3
    \end{align*}
    where we've used that $\#\set{J\leq \ZZ[\zeta_3]~:~\mathcal{N}(J)=d}=2^{\omega(d)}\ll_{\epsilon,C} d^{\epsilon/C}\leq \log(X)^\epsilon$.
    
    Note that $\chi(I):=\art{n}{I}_3$ is a Hecke character for the Galois group $\gal(\QQ(\zeta_3,\sqrt[3]{n})/\QQ(\zeta_3))$.  Since $n$ is not a perfect cube, this is a non-principal character.  Moreover, its modulus $\fm$ is supported on the primes dividing $3n$, and its conductor $f_\chi$ also is at most a constant power of $3n$.  In particular, 
    \[\cN(f_\chi)\omega(\cN(\fm))\ll (n\omega(n))^{O(1)}\leq n^{O(1)}\leq \log(X)^{O(C)}.\]
    After potentially enlarging $C$, we may then assume that $\cN(f_\chi)\omega(\cN(\fm))\leq \log(X)^{C}$.

    We now apply \Cref{lem:KPSS_LSD} to the inner sum above, to obtain
    \[\sum_{\substack{I\leq \ZZ[\zeta_3]\\\gcd(\cN(I),3d)=1\\\cN(I)\leq X/d}}\mu^2(\cN(I))\art{n}{I}_3
    =\sum_{\substack{I\leq \ZZ[\zeta_3]\\\cN(I)\leq X/d}}\mu^2(\cN(I)\cdot 3d)\art{n}{I}_3
    \ll_C X\log(X)^{-C}.\]
    Possibly enlarging $C$ again to account for $\epsilon$ above, we now have the result.
\end{proof}

\subsubsection{Counting cyclic cubic fields}
We require the asymptotic
\begin{equation} \label{eqn:number_F}
    \#\{F/\Q \textnormal{ cyclic cubic}: F \textnormal{ unramified at }3, \abs{\Delta_F}\leq X\}
    \sim C X^{1/2},
\end{equation}
for some constant $C>0$. This follows for example from  Wright \cite{MR969545}*{Thm.~I.4}.

\subsection{Proof of \Cref{thm:LTAvg_d}}
Our toolkit has been filled, and so we are ready to prove the main result of this section. 

\subsubsection{Reduction to character sum}
To begin, we reformulate into a character sum problem via \Cref{lem:Ltruncated}.  Writing $\chi_F$ for a choice of Dirichlet character associated to $F$, this yields

\begin{align*}
    \sum_{\substack{F\textnormal{ cyclic cubic}\\\textnormal{unram. at }3\\ d \mid \Delta_F \\ \abs{\Delta_F}\leq X}}\zeta_F^*(1)
    &=\sum_{\substack{F\textnormal{ cyclic cubic}\\\textnormal{unram. at }3\\ d \mid \Delta_F \\\abs{\Delta_F}\leq X}}L(\chi_F,1)L(\bar{\chi}_F,1)
    \\&=\sum_{\substack{F\textnormal{ cyclic cubic}\\\textnormal{unram. at }3\\ d \mid \Delta_F \\\abs{\Delta_F}\leq X}}\sum_{\substack{1\leq n_1\leq X^{1/2} \\ 1\leq n_2\leq X^{1/2}}}\frac{\chi_F(n_1)\bar{\chi_F(n_2)}}{n_1n_2} + O\braces{X^{1/4}\log(X)^2}
    \\&=\frac{1}{2}\sum_{\substack{I\leq \ZZ[\zeta_3]\\\gcd(\cN(I),3d)=d\\\cN(I)\leq X^{1/2}}}
    \sum_{\substack{1\leq n_1\leq X^{1/2} \\ 1\leq n_2\leq X^{1/2}}}\mu^2(\cN(I))\frac{\art{n_1}{I}_3\bar{\art{n_2}{I}_3}}{n_1n_2}+O\braces{X^{1/4}\log(X)^2}
\end{align*}
where in the second equality we used \eqref{eqn:number_F}, and in the third we apply \Cref{lem:characterchange}.  
Defining
\[S_d(X):=\sum_{\substack{I\leq \ZZ[\zeta_3]\\\gcd(\cN(I),3d)=d\\\cN(I)\leq X}}
\sum_{\substack{1\leq n_1\leq X^{1/2} \\ 1\leq n_2\leq X^{1/2}}}\mu^2(\cN(I))\frac{\art{n_1}{I}_3\bar{\art{n_2}{I}_3}}{n_1n_2},\]
we have then shown that
\begin{equation}\label{eq:charsum}
\sum_{\substack{F\textnormal{ cyclic cubic}\\\textnormal{unram. at }3\\ d \mid \Delta_F \\ \abs{\Delta_F}\leq X}}\zeta_F^*(1)=\frac{1}{2}S_d(X^{1/2}) + O\braces{X^{1/4}\log(X)^2}.
\end{equation}
In particular, it is now sufficient to study the character sum $S_d(X)$ in order to prove \Cref{thm:LTAvg_d}.
\subsubsection{Isolating the main term in $S_d$}
We now study the sum $S_d(X)$, recalling that we have $d\leq \log(X)^D$ for some $D>0$.  Let us separate this sum depending on the sizes of the $n_i$.

\begin{definition}\label{def:SM}
    For $\cM\subseteq \set{1,2}$ we define $S^\cM_d(X)$ to be the subsum of $S_d(X)$ such that
    \begin{itemize}
        \item for $m\in \cM$, $n_m>\log(X)^A$; and
        \item for $m\not\in\cM$, $n_m\leq \log(X)^A$. 
    \end{itemize}
    Here, $A>0$ is a (large) exponent to be chosen later.  Clearly, $S_d(X)=\sum_{\cM\subseteq \set{1,2}}S_d^{\cM}(X)$.
\end{definition}

\begin{lemma}\label{lem:deltaappears}
    If $\cM\neq\emptyset$ then there exists a constant $\delta>0$ such that $S_d^{\cM}(X)\ll X\log(X)^{1+\delta^{-1}-\delta A}$.
\end{lemma}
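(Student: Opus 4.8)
The plan is to reduce $S_d^\cM(X)$ to the bilinear sieve estimate of Proposition~\ref{prop:bilinearsieve} by treating the ``small'' variable (if any) as a fixed parameter and absorbing it into the coefficients. Concretely, suppose first that $\cM = \{1,2\}$, so both $n_1$ and $n_2$ are $>\log(X)^A$. Rewrite
\[
S_d^{\{1,2\}}(X) = \sum_{\substack{\log(X)^A < n_2 \leq X^{1/2}}} \frac{1}{n_2}\sum_{\substack{\log(X)^A < n_1 \leq X^{1/2}}}\frac{1}{n_1}\sum_{\substack{I\leq\ZZ[\zeta_3]\\ \gcd(\cN(I),3d)=d\\ \cN(I)\leq X}} \mu^2(\cN(I))\art{n_1}{I}_3\overline{\art{n_2}{I}_3}.
\]
One first factors out the ideal $J$ of norm $d$ dividing $I$ (as in the proof of Lemma~\ref{lem:SW}), costing a factor $2^{\omega(d)} \ll \log(X)^{\epsilon}$, and writes the remaining sum over ideals coprime to $3d$ of norm $\leq X/d$. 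Then, for fixed $n_2$, one applies Proposition~\ref{prop:bilinearsieve} with $b_{n_1} = 1/n_1 \cdot n_1 = $ (rescale so coefficients are bounded by $1$) — more precisely, set $a_I = \mu^2(\cN(I))\overline{\art{n_2}{I}_3}\,\mathbf{1}[\gcd(\cN(I),3d)=1]$ and $b_{n_1}=1$, noting $|a_I|,|b_{n_1}|\leq 1$, and pull the $1/n_1$ weight into the sum exactly as in the statement of Proposition~\ref{prop:bilinearsieve} (which already carries the $1/n$). This gives an inner bound $\ll (X/d)\log(X)^{\delta^{-1}-\delta A}$, uniformly in $n_2$; summing $1/n_2$ over the dyadic-length range $\log(X)^A < n_2 \leq X^{1/2}$ contributes a further $\ll \log X$, and the $2^{\omega(d)}$ factor contributes $\log(X)^\epsilon$, yielding $S_d^{\{1,2\}}(X) \ll X\log(X)^{1+\delta^{-1}-\delta A}$ after adjusting $\delta$ slightly (and noting $1/d \leq 1$).

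When $\cM = \{1\}$ (the case $\cM=\{2\}$ is symmetric by conjugation), the variable $n_2 \leq \log(X)^A$ is small, so we fix it and absorb $\overline{\art{n_2}{I}_3}$ into the $I$-coefficient: set $a_I = \mu^2(\cN(I))\overline{\art{n_2}{I}_3}$ with $|a_I| \le 1$, and apply Proposition~\ref{prop:bilinearsieve} in the single variable $n_1 > \log(X)^A$. The key point is that the bilinear-sieve output is uniform in the coefficient sequences $a_I$, so the dependence on $n_2$ is harmless; we then sum the resulting bound $\ll X\log(X)^{\delta^{-1}-\delta A}$ over the at most $\log(X)^A$ values of $n_2$, weighted by $1/n_2$, contributing only an extra $\log\log X \ll \log X$ factor (in fact $O(\log(X)^{\epsilon})$ if one is careful, but $\log X$ suffices). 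Combined with the $2^{\omega(d)}$ loss this again gives $S_d^{\{1\}}(X) \ll X\log(X)^{1+\delta^{-1}-\delta A}$.

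The main obstacle — or rather the main thing to be careful about — is ensuring the bound is genuinely uniform in $d$ and in the small variable $n_m$, and that the $\mu^2(\cN(I))$ and the congruence condition $\gcd(\cN(I),3d)=d$ on $I$ are compatible with the hypotheses of Proposition~\ref{prop:bilinearsieve} (which wants squarefree norm coprime to $3$). This is handled exactly as in Lemma~\ref{lem:SW}: splitting off the norm-$d$ part $J$ of $I$ leaves an ideal with squarefree norm coprime to $3d$, and the factor $\art{n_1}{J}_3\overline{\art{n_2}{J}_3}$ has absolute value $1$ so can be folded into the (bounded) coefficient $a_{I}$ after reindexing; the number of choices of $J$ is $2^{\omega(d)} = O(\log(X)^\epsilon)$ since $d \leq \log(X)^D$. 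One should also verify that $n_1, n_2 \leq X^{1/2}$ are genuinely $\leq X$ (the bilinear sieve range), which is immediate. Tracking all these $\log(X)^\epsilon$ losses and the $\log X$ from the $1/n_m$-summation, and then replacing $\delta$ by a slightly smaller $\delta' > 0$ to absorb the $1+\epsilon$ exponent shift, delivers the stated $\ll X\log(X)^{1+\delta^{-1}-\delta A}$.
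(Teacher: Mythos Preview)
Your approach is correct and essentially identical to the paper's: fix $n_2$, absorb $\overline{\art{n_2}{I}_3}$ into the coefficient $a_I$, apply Proposition~\ref{prop:bilinearsieve} to the $n_1$-sum, and then sum $1/n_2$ trivially for a $\log X$ loss. The paper is simply more economical---it handles all nonempty $\cM$ at once by assuming $1\in\cM$ and absorbs the constraint $\gcd(\cN(I),3d)=d$ directly into $a_I$ (still of modulus $\leq 1$) rather than factoring out an ideal $J$ of norm $d$, so your $2^{\omega(d)}$ bookkeeping and $\delta$-adjustment are unnecessary (and note that in your factoring the term $\art{n_1}{J}_3$ depends on $n_1$, so it should be folded into $b_{n_1}$ rather than $a_I$).
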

\begin{proof}
    By symmetry, let us assume that $1\in \cM$.  Now, for each $n_2$, we can apply \Cref{prop:bilinearsieve} to bound
    \[\sum_{\substack{I\leq \ZZ[\zeta_3]\\\gcd(\cN(I),3d)=d\\\cN(I)\leq X}}\sum_{\log(X)^A< n_1\leq X}\mu^2(\cN(I))\frac{\art{n_1}{I}_3\bar{\art{n_2}{I}_3}}{n_1}\ll X\log(X)^{\delta^{-1}-\delta A}.\]
    In particular
    \[S_d^{\cM}(X)\ll X\log(X)^{\delta^{-1}-\delta A}\sum_{1\leq n_2\leq X}\frac{1}{n_2}\ll X\log(X)^{1+\delta^{-1}-\delta A}. \qedhere\]
\end{proof}

So if we take large enough $A$ then we will reduce to studying $S_d^\emptyset(X)$.  Now, when the character in $S_d^{\emptyset}(X)$ is nontrivial, we will use our Siegel--Walfisz result (\Cref{lem:SW}) to recover a saving.

\begin{lemma}\label{lem:Semptyestimate}  We have
\[S_d^\emptyset(X)=\frac{\pi}{3\sqrt{3}}L(\gamma_{\QQ(\zeta_3),3},1) \cdot  \frac{2^{\omega(d)} X}{d}
\sum_{\substack{1\leq n_1\leq \log(X)^A\\1\leq n_2\leq \log(X)^A\\n_1n_2^2\in\QQ^{\times3}}}
    \frac{\eta(n_1n_2d)}{n_1n_2}
    +O\braces{X\log(X)^{-A}},\]
    where $\eta$ is the multiplicative function on $\N$ given on non-trivial prime powers $p^k$ by
\[\eta(p^k)=\begin{cases}(1 + 2p^{-1})^{-1},&\textnormal{if }p\equiv 1\bmod 3,\\1,&\textnormal{else}.\end{cases}\]
\end{lemma}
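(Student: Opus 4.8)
The plan is to split the sum $S_d^\emptyset(X)$ according to whether the character $\art{\cdot}{I}_3$ is trivial or not when restricted to the relevant arguments. Recall that in $S_d^\emptyset(X)$ both $n_1,n_2 \leq \log(X)^A$ are small, and we sum over $I$ with $\cN(I) \leq X$, $\gcd(\cN(I),3d) = d$, weighted by $\mu^2(\cN(I))$. Swapping the order of summation, for each fixed pair $(n_1,n_2)$ we must evaluate
\[
T(n_1,n_2) := \sum_{\substack{I\leq \ZZ[\zeta_3]\\\gcd(\cN(I),3d)=d\\\cN(I)\leq X}}\mu^2(\cN(I))\art{n_1}{I}_3\bar{\art{n_2}{I}_3}.
\]
By multiplicativity of the cubic residue symbol, $\art{n_1}{I}_3\bar{\art{n_2}{I}_3} = \art{n_1n_2^2}{I}_3$ (using that $\bar\zeta_3 = \zeta_3^2$, so the conjugate symbol is the square). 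The character $I \mapsto \art{n_1n_2^2}{I}_3$ is principal precisely when $n_1 n_2^2$ is a perfect cube in $\QQ^\times$ (equivalently $n_1 n_2^2 \in \QQ^{\times 3}$), and is a non-principal Hecke character otherwise. Thus I would write $S_d^\emptyset(X) = \Sigma_{\mathrm{main}} + \Sigma_{\mathrm{err}}$, where $\Sigma_{\mathrm{main}}$ collects the pairs with $n_1n_2^2 \in \QQ^{\times 3}$ and $\Sigma_{\mathrm{err}}$ the rest.

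For the error part $\Sigma_{\mathrm{err}}$: whenever $n_1n_2^2$ is not a cube, $m := n_1 n_2^2$ (or rather its cubefree part) is not a perfect cube and is at most $\log(X)^{3A}$, so after possibly enlarging $C$ in terms of $A$, Lemma~\ref{lem:SW} applies to give $T(n_1,n_2) \ll_C X\log(X)^{-C}$ uniformly. Summing the $1/(n_1n_2)$ weights over the (at most $\log(X)^{2A}$ many, with harmonic-sum total $\ll \log(X)^2$) pairs costs only a bounded power of $\log X$, so choosing $C$ large enough relative to $A$ gives $\Sigma_{\mathrm{err}} \ll X\log(X)^{-A}$, which is the claimed error.

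For the main part $\Sigma_{\mathrm{main}}$: when $n_1n_2^2 \in \QQ^{\times 3}$ the symbol is principal, so $T(n_1,n_2)$ reduces to counting ideals $I$ of $\ZZ[\zeta_3]$ with squarefree norm, $\cN(I) \leq X$, and $\gcd(\cN(I),3d) = d$. Peeling off the factor $J$ with $\cN(J) = d$ (there are $2^{\omega(d)}$ such, since $d$ is a product of split primes) and applying Proposition~\ref{prop:sqfideals} (or Theorem~\ref{thm:idealcounting} with $H$ trivial and $M = 3d$) to the coprime-to-$3d$ part gives $T(n_1,n_2) = 2^{\omega(d)}\zeta^*_{\QQ(\zeta_3)}(1) L(\gamma_{\QQ(\zeta_3),3d},1)\cdot (X/d) + O(X^{1/2+\epsilon})$, the error being harmless after summing over the small pairs. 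Here $\zeta^*_{\QQ(\zeta_3)}(1) = \pi/(3\sqrt 3)$ is the standard residue, which produces the stated constant; and $L(\gamma_{\QQ(\zeta_3),3d},1) = L(\gamma_{\QQ(\zeta_3),3},1)\cdot\prod_{p\mid d}(\text{Euler factor ratio})$, where for each split $p \mid d$ the ratio of the $p \mid M$ local factor $(1-1/p)^2$ (Lemma~\ref{lem:L_gamma_3}) to the $p\nmid M$ factor $(1-1/p)^2(1+2/p)$ is exactly $(1+2/p)^{-1} = \eta(p)$, while inert primes contribute the same factor in both, so the product over primes dividing the cubefree value only feels $\eta$ on split primes. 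A short bookkeeping check shows that incorporating $\prod_{p\mid d}\eta(p)$ into a single multiplicative function $\eta$ evaluated at $n_1 n_2 d$ — noting $\eta(p) = 1$ for $p \equiv 2 \bmod 3$ and, crucially, that $\eta$ is insensitive to whether one feeds in $n_1n_2$, $n_1n_2^2$, or (since $n_1n_2^2$ is a cube, so $n_1n_2$ and $n_1n_2^2$ have the same radical) the radical of these — reconciles the explicit $L$-value with $L(\gamma_{\QQ(\zeta_3),3},1)\prod_{p \mid n_1n_2d}\eta(p)$, yielding exactly the displayed formula.

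The main obstacle I anticipate is the last bookkeeping step: matching the Euler-product manipulations of $L(\gamma_{\QQ(\zeta_3),3d},1)$ against the combinatorial factor so that everything assembles into the single multiplicative function $\eta$ evaluated at $n_1n_2d$, and in particular keeping straight the interplay between the condition $n_1n_2^2\in\QQ^{\times 3}$, the support of $\eta$, and the $d$-dependence (split versus inert primes dividing $d$). The analytic inputs — the bilinear sieve already consumed to reduce to $S_d^\emptyset$, and Siegel--Walfisz plus Proposition~\ref{prop:sqfideals} here — are by comparison routine applications of results already in hand; one just has to be careful that all the implied constants are uniform in $d \leq \log(X)^D$ and that the thresholds $A$, $C$, $\epsilon$ are chosen in the right order.
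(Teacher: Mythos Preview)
Your overall strategy matches the paper's, but there is a genuine gap in the main-term computation. When $n_1n_2^2\in\QQ^{\times 3}$, the symbol $\art{n_1n_2^2}{I}_3$ is the \emph{principal} character: it equals $1$ on ideals $I$ coprime to $n_1n_2$, but it equals $0$ on ideals sharing a prime factor with $n_1n_2$. Hence $T(n_1,n_2)$ does \emph{not} reduce to counting ideals with $\gcd(\cN(I),3d)=d$; it reduces to counting ideals with $\gcd(\cN(I),3n_1n_2d)=d$. After peeling off the $d$-part, Proposition~\ref{prop:sqfideals} must therefore be applied with $M=3n_1n_2d$, not $M=3d$, and this produces $L(\gamma_{\QQ(\zeta_3),3n_1n_2d},1)$ in the main term.

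This omitted coprimality is precisely the source of the $n_1n_2$-dependence in $\eta$: by Lemma~\ref{lem:L_gamma_3} one has $L(\gamma_{\QQ(\zeta_3),3n_1n_2d},1)=\eta(n_1n_2d)\,L(\gamma_{\QQ(\zeta_3),3},1)$. Your ``bookkeeping'' paragraph tries to conjure $\eta(n_1n_2d)$ out of $\eta(d)$ alone, which is impossible --- for instance take $d=1$ and $n_1=n_2=7$, so that $n_1n_2^2=7^3$ is a cube, $\eta(d)=1$, but $\eta(n_1n_2d)=\eta(49)=(1+2/7)^{-1}\neq 1$. Once the coprimality-to-$n_1n_2$ condition is restored and $M=3n_1n_2d$ is used, the rest of your argument (including the error handling via Lemma~\ref{lem:SW} and the divisor-bound treatment of the $2^{2\omega(M)}$ factor) goes through and coincides with the paper's proof.
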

\begin{proof}
    By definition
    \begin{align*}
    S_d^\emptyset(X)
    &:=\sum_{\substack{1\leq n_1\leq \log(X)^A\\1\leq n_2\leq \log(X)^A}}
    \frac{1}{n_1n_2}\sum_{\substack{I\leq \ZZ[\zeta_3]\\\gcd(\cN(I),3d)=d\\\cN(I)\leq X}}\mu^2(\cN(I))
        \art{n_1}{I}_3\bar{\art{n_2}{I}_3}.
    \end{align*}
    We next use  $\bar{\art{\cdot}{I}_3}=\art{\cdot}{I}_3^2$ and apply \Cref{lem:SW} to 
    the collection of $n_1,n_2$ such that $n_1n_2^2$ is not a perfect cube, to obtain   
    \begin{align*}
    S_d^\emptyset(X)
    &=\sum_{\substack{1\leq n_1\leq \log(X)^A\\1\leq n_2\leq \log(X)^A\\n_1n_2^2\in\QQ^{\times 3}}}
    \frac{1}{n_1n_2}\sum_{\substack{I\leq \ZZ[\zeta_3]\\\gcd(\cN(I),3d)=d\\\cN(I)\leq X}}\mu^2(\cN(I))\art{n_1n_2^2}{I}_3 + O\braces{X\log(X)^{-A}}
    \\&=\sum_{\substack{1\leq n_1\leq \log(X)^A\\1\leq n_2\leq \log(X)^A\\n_1n_2^2\in\QQ^{\times 3}}}
    \frac{1}{n_1n_2}\sum_{\substack{I\leq \ZZ[\zeta_3]\\\gcd(\cN(I),3n_1n_2d)=d\\\cN(I)\leq X}}\mu^2(\cN(I)) + O\braces{X\log(X)^{-A}}.
    \end{align*}
    To encode the condition $d \mid \cN(I)$, we sum over all squarefree ideals $J$ with $\cN(J) = d$.
    Then we have $J \mid I$ for some such $J$. After a change of variables, this yields
    \begin{align*}
    S_d^\emptyset(X) &=\sum_{\substack{1\leq n_1\leq \log(X)^A\\1\leq n_2\leq \log(X)^A\\n_1n_2^2\in\QQ^{\times 3}}}  \frac{1}{n_1n_2}
    \sum_{\substack{J\leq \ZZ[\zeta_3]\\ \cN(J) = d}}
    \sum_{\substack{I\leq \ZZ[\zeta_3]\\\gcd(\cN(I),3n_1n_2d)=1 \\\cN(I)\leq X/d}}\mu^2(\cN(I)) + O\braces{X\log(X)^{-A}}.
    \end{align*}    
    We now apply \Cref{prop:sqfideals} to the inner sum, to obtain
    \begin{align*}
    S_d^\emptyset(X)
    &=\frac{2^{\omega(d)} X}{d}
    \sum_{\substack{1\leq n_1\leq \log(X)^A\\1\leq n_2\leq \log(X)^A\\n_1n_2^2\in\QQ^{\times3}}}
    \frac{\zeta_{\QQ(\zeta_3)}^*(1)L(\gamma_{\QQ(\zeta_3),3n_1n_2d},1)}{n_1n_2}
    \\&+O\braces{X\log(X)^{-A} + 2^{\omega(d)}X^{1/2}\log(X)^{19}\sum_{\substack{1\leq n_1\leq \log(X)^A\\1\leq n_2\leq \log(X)^A\\n_1n_2^2\in\QQ^{\times3}}}4^{\omega(n_1n_2d)}}
    \end{align*}
    where we use that $\#\{J\leq \ZZ[\zeta_3] :  \cN(J) = d\} = 2^{\omega(d)}$.
    An elementary divisor bound, together with $d\leq \log(X)^D$, gives 
    \[\sum_{\substack{1\leq n_1\leq \log(X)^A\\1\leq n_2\leq \log(X)^A\\n_1n_2^2\in\QQ^{\times3}}}4^{\omega(n_1n_2d)}
    \ll_\epsilon  \sum_{\substack{1\leq n_1\leq \log(X)^A\\1\leq n_2\leq \log(X)^A\\n_1n_2^2\in\QQ^{\times3}}}n_1^\epsilon n_2^\epsilon d^{\epsilon}\ll_\epsilon\log(X)^{2A(1+\epsilon) + \epsilon D},\]
    so in fact choosing a fixed $\epsilon$ and noting that $\zeta_{\QQ(\zeta_3)}^*(1) = \pi/3\sqrt{3}$ we have
    \[S_d^\emptyset(X)
    =\frac{\pi}{3\sqrt{3}} \cdot  \frac{2^{\omega(d)} X}{d}
    \sum_{\substack{1\leq n_1\leq \log(X)^A\\1\leq n_2\leq \log(X)^A\\n_1n_2^2\in\QQ^{\times3}}}
    \frac{L(\gamma_{\QQ(\zeta_3),3n_1n_2d},1)}{n_1n_2}+O\braces{X\log(X)^{-A}}.\]
    We then apply Lemma \ref{lem:L_gamma_3} to find that $L(\gamma_{\QQ(\zeta_3),3n_1n_2d},1)$
    equals
    \begin{align*}
    &\frac{2}{3}
    \prod_{\substack{p\equiv 1 \bmod 3 \\ p \nmid n_1n_2d}}\left( 1 - \frac{1}{p}\right)^2\left(1 + \frac{2}{p}\right)
    \prod_{\substack{p\equiv 1 \bmod 3\\p\mid n_1n_2 d}}\left( 1 - \frac{1}{p}\right)^2 
    \prod_{p\equiv 2 \bmod 3}\braces{1-\frac{1}{p^2}}
    \\&=\eta(n_1n_2d)L(\gamma_{\QQ(\zeta_3),3},1)\end{align*}
    and so the result follows.
\end{proof}

\subsubsection{Main term analysis}
Let us now conclude our discussion on character sums by applying the lemmata above to give a precise result for the sum $S(X)$ from \eqref{eq:charsum}.

\begin{proposition}\label{prop:SX}
    For every $C>0$ we have
    \[S_d(X) =\frac{7\pi}{26\sqrt{3}}\ \alpha \beta_d \cdot \frac{2^{\omega(d)}X}{d} 
    +O_C\braces{X\log(X)^{-C}}\]
    where $\alpha$ and $\beta_d$ are as in Theorem \ref{thm:LTAvg_d}.
\end{proposition}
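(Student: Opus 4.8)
The plan is to assemble the decomposition $S_d(X)=\sum_{\mathcal M\subseteq\{1,2\}}S_d^{\mathcal M}(X)$ of \Cref{def:SM} out of the estimates already in hand, tuning the free exponent $A$ against $C$, and then to evaluate the single arithmetic sum that survives.

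First I would fix $C>0$, take $\delta>0$ as in \Cref{lem:deltaappears}, and choose $A=A(C,\delta)$ large enough that $1+\delta^{-1}-\delta A\le -C$ and $A\ge C$. By \Cref{lem:deltaappears} the three pieces $S_d^{\mathcal M}(X)$ with $\mathcal M\ne\emptyset$ are then each $O(X\log(X)^{-C})$, and the error term $O(X\log(X)^{-A})$ of \Cref{lem:Semptyestimate} is $O(X\log(X)^{-C})$. Hence
\[
S_d(X)=\frac{\pi}{3\sqrt3}\,L(\gamma_{\QQ(\zeta_3),3},1)\cdot\frac{2^{\omega(d)}X}{d}\sum_{\substack{1\le n_1,n_2\le\log(X)^A\\ n_1n_2^2\in\QQ^{\times3}}}\frac{\eta(n_1n_2d)}{n_1n_2}+O_C\!\braces{X\log(X)^{-C}}.
\]

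Next I would complete the truncated sum to the full series $\Sigma_d:=\sum_{n_1,n_2\ge1,\ n_1n_2^2\in\QQ^{\times3}}\eta(n_1n_2d)/(n_1n_2)$. Writing $a=v_p(n_1)$, $b=v_p(n_2)$, the constraint $n_1n_2^2\in\QQ^{\times3}$ means $a+2b\equiv0\bmod 3$ at every $p$, so $\Sigma_d$ is bounded by an Euler product with $p$-factor $1+O(p^{-2})$ (using $|\eta|\le 1$); thus $\Sigma_d$ converges absolutely, uniformly in $d$, and the tail over $\max(n_1,n_2)>\log(X)^A$ is $\ll\log(X)^{-cA}$ for some absolute $c>0$. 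Since $\tfrac{\pi}{3\sqrt3}L(\gamma_{\QQ(\zeta_3),3},1)$ is a fixed constant and $2^{\omega(d)}/d\le1$, replacing the truncated sum by $\Sigma_d$ costs only $O_C(X\log(X)^{-C})$ after a further enlargement of $A$. It then remains to prove the identity $\tfrac{\pi}{3\sqrt3}L(\gamma_{\QQ(\zeta_3),3},1)\,\Sigma_d=\tfrac{7\pi}{26\sqrt3}\,\alpha\beta_d$.

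Finally I would evaluate $\Sigma_d$ as an Euler product $\Sigma_d=\prod_p\Sigma_{d,p}$ with $\Sigma_{d,p}=\sum_{a,b\ge0,\ a+2b\equiv0\bmod 3}\eta(p^{a+b+v_p(d)})p^{-(a+b)}$, which equals $1+\eta(p)(T_p-1)$ when $p\nmid d$ and $\eta(p)T_p$ when $p\mid d$, where $T_p:=\sum_{a\equiv b\bmod 3}p^{-(a+b)}=\tfrac13\big((1-p^{-1})^{-2}+2(1+p^{-1}+p^{-2})^{-1}\big)$ by a cube-roots-of-unity filter. Multiplying $\Sigma_{d,p}$ against the local factor of $L(\gamma_{\QQ(\zeta_3),3},1)$ from \Cref{lem:L_gamma_3}, simplifying via $(1-p^{-1})^2T_p=\tfrac{1-p^{-1}+p^{-2}}{1+p^{-1}+p^{-2}}$ and $(1-p^{-2})T_p=\tfrac{1+p^{-3}}{1-p^{-3}}$, and separating the cases $p\mid d$, $p$ split, $p$ inert, one recognises the product of local factors defining $\alpha$ and $\beta_d$; the ramified prime $p=3$ contributes $\tfrac23T_3=\tfrac{21}{26}$, which together with $\zeta^*_{\QQ(\zeta_3)}(1)=\tfrac{\pi}{3\sqrt3}$ assembles the constant $\tfrac{7\pi}{26\sqrt3}$. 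The choice of $A$ and the tail bound are routine; the main obstacle is this last step — the prime-by-prime bookkeeping matching $\tfrac{\pi}{3\sqrt3}L(\gamma_{\QQ(\zeta_3),3},1)\Sigma_d$ to $\tfrac{7\pi}{26\sqrt3}\alpha\beta_d$, and in particular correctly tracking the interaction between the weight $\eta$, the cube condition, and the ramification parameter $d$ at the primes dividing $d$.
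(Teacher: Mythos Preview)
Your proof is correct and follows essentially the same approach as the paper: reduce to $S_d^\emptyset$ via \Cref{lem:deltaappears} with $A$ chosen against $C$, apply \Cref{lem:Semptyestimate}, complete the truncated $\eta$-sum to an absolutely convergent Euler product, and bound the tail. The only difference is cosmetic --- the paper evaluates the Euler product via the change of variables $n_i=d_1d_2^2m_i^3$ with $\mu^2(d_1d_2)=1$, whereas you compute the local factor directly via a cube-roots-of-unity filter on the congruence $a\equiv b\bmod 3$; both routes produce the same $T_p=(1+p^{-2}+p^{-4})(1-p^{-3})^{-2}$ and hence the same constants $\tfrac{7\pi}{26\sqrt3}\alpha\beta_d$.
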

\begin{proof}
    Choosing $A$ in \Cref{def:SM} large enough so that $1+\delta^{-1}-A\delta<-C$, where $\delta$ is as in \Cref{lem:deltaappears}, it follows from said lemma that
    \begin{align*}
    S_d(X)&=S_d^{\emptyset}(X)+O\braces{X\log(X)^{-C}} \\
    &=\frac{\pi}{3\sqrt{3}}L(\gamma_{\QQ(\zeta_3),3},1)
    \cdot \frac{2^{\omega(d)}X}{d} 
    \sum_{\substack{1\leq n_1\leq \log(X)^A\\1\leq n_2\leq \log(X)^A\\n_1n_2^2\in\QQ^{\times3}}}
    \frac{\eta(n_1n_2 d)}{n_1n_2}
    +O\braces{X\log(X)^{-C}}
    \end{align*}
    where the second equality follows from \Cref{lem:Semptyestimate}, after at worst enlarging $A$ again so that $A>C$.  It then remains to understand this sum of $\eta$.  We can write every integer uniquely as $n_1=d_1d_2^2m_1^3$ for integers $d_1,d_2,m_1$ with $\mu^2(d_1d_2) = 1$. The constraint that $n_1n_2^2$ is a cube means that we have $n_2=d_1d_2^2m_2^3$ for some integer $m_2$. 
    Hence this change of variables allows us to write
    \begin{equation}\label{eq:etaeq}
    \sum_{\substack{1\leq n_1\leq \log(X)^A\\1\leq n_2\leq \log(X)^A\\n_1n_2\in\QQ^{\times3}}}
    \frac{\eta(n_1n_2d)}{n_1n_2} 
    = \sum_{\substack{1\leq m_1\leq \log(X)^{A/3}\\1\leq m_2\leq \log(X)^{A/3}}}
    \sum_{\substack{1\leq d_1\leq \log(X)^{A}\\1\leq d_2\leq \log(X)^{A/2}}}
    \frac{\mu^2(d_1d_2)\eta(d_1^2d_2^4m_1^3m_2^3 d)}{d_1^2d_2^4m_1^3m_2^3}.
    \end{equation}
    Note that this sum is absolutely convergent in $X$ since $\abs{\eta(N)}\leq1$.
    As $X\to\infty$ we can expand this out into an Euler product arising from a 
    multiple Dirichlet series which converges to
    \begin{align*}
    &\prod_{p}\braces{\sum_{d_1,d_2,m_1,m_2=1}^\infty \frac{\mu^2(p^{d_1+d_2})
    \eta(p^{2d_1 + 4d_2 + 3m_1 + 3m_2})}{p^{2d_1 + 4d_2 + 3m_1 + 3m_2 + v_p(d)}}}. 
    \end{align*}
    We first consider those $p \nmid$. 
    On using the explicit formula for $\eta$ from Lemma \ref{lem:Semptyestimate}, this yields    
    \begin{align*}
    &\prod_{p \nmid d}\braces{1+\eta(p)\braces{\braces{1+p^{-2} + p^{-4}}\braces{1+p^{-3}+p^{-6}+\dots}^2-1}}
    \\&=\prod_{p \nmid d}\braces{1+\eta(p)\braces{\braces{1+p^{-2} +p^{-4}}\braces{1-p^{-3}}^{-2}-1}}
    \\&=\prod_{\substack{p\not\equiv 1\bmod 3 \\ p \nmid d}}\braces{1+p^{-2} +p^{-4}}\braces{1-p^{-3}}^{-2}
    \prod_{\substack{p\equiv 1\bmod 3 \\ p \nmid d}} \left(1 + \frac{1 +3p^{-1} + p^{-2} - p^{-6}}
    {p^2(1 + 2p^{-1})(1 - p^{-3})^2}\right).
    \end{align*}
    Similarly, for $p \mid d$ we obtain
    \begin{align*}
    &\prod_{\substack{p \mid d}}(1 + 2p^{-1})^{-1}\braces{1+p^{-2} +p^{-4}}\braces{1-p^{-3}}^{-2}
    \end{align*}    
    since $d$ is only divisible by primes which are $1 \bmod 3$.
    Using \Cref{lem:L_gamma_3}  we obtain
    \[L(\gamma_{\QQ(\zeta_3),3},1) = \frac{2}{3}\prod_{\substack{p\equiv 1 \bmod 3}}\left( 1 - \frac{1}{p}\right)^2\left( 1 + \frac{2}{p}\right) \prod_{p\equiv 2 \bmod 3}\braces{1-\frac{1}{p^2}}.\]
    Recalling the factor of $\frac{\pi}{3\sqrt{3}}$ and making explicit the Euler factor at $p =3$,
    a short argument cancelling terms in Euler factors shows that we obtain the claimed leading term.

    Finally, the tail (i.e.~the difference between the limit for the multiple Dirichlet series and the sum in \Cref{eq:etaeq}) is bounded by an integral estimate which is at worst $\log(X)^{-\frac{23}{6}A}\ll \log(X)^{-C}$, hence the claim holds.
\end{proof}
\subsubsection{Proof of \Cref{thm:LTAvg_d}}
This now follows from \Cref{eq:charsum} and \Cref{prop:SX}. \qed

\section{Counting \texorpdfstring{$A_4$}{A4}-quartics} \label{sec:quartic}
We now turn to the central purpose of this article:  counting $A_4$-quartic extensions of $\QQ$. We begin by preparing for the proof of Theorem \ref{thm:fixed_cubic} on $A_4$-quartics with given cubic resolvent. 

\subsection{Fields to Ideals}
We will require some results from \cite{MR3215550} about parametrising $A_4$-quartics with given cubic resolvent.

\begin{proposition}[\cite{MR3215550}*{Thm.~2.2}]\label{prop:CTThm2.2}
    Let $F$ be a cyclic cubic field.  There is a $3:1$ surjective map
    \[\set{K/F~:~\substack{\exists\alpha\in F^\times\backslash{F^{\times2}}\textnormal{ such that}\\K=F(\sqrt{\alpha})\textnormal{ and }N_{F/\QQ}(\alpha)\in\QQ^{\times 2}}}\to \set{L/\QQ~:~\substack{A_4\textnormal{-quartic with}\\\textnormal{cubic resolvent }F}}.\]
    Under this correspondence, we have $\Delta_L=\cN(\fd(K/F))\Delta_F$, where $\fd$ denotes the relative discriminant.  Under this map, the elements of the fibre above an $A_4$-quartic $L$ have the same Galois closure as $L$.
\end{proposition}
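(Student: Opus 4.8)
The statement is the parametrisation of Cohen--Thorne, and the plan is to recover it directly from the Galois theory of $A_4$. Recall that $A_4$ has a unique normal subgroup $V\cong(\ZZ/2)^2$, that $A_4/V\cong\ZZ/3$, and that conjugation makes $A_4/V$ permute the three order-$2$ subgroups of $V$ cyclically (any action of $\ZZ/3$ on $(\ZZ/2)^2$ fixing all three is trivial, which would force $A_4$ abelian). Translating through Galois theory over $\QQ$: an $A_4$-quartic $L$ has Galois closure $\widetilde L/\QQ$ with group $A_4$, its cubic resolvent is the fixed field $F$ of $V$ (so $\widetilde L/F$ is a $V$-extension), and $L$ is the fixed field of a Sylow $3$-subgroup $C$. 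If $\sigma$ generates $\Gal(F/\QQ)$, then the three quadratic subextensions of $\widetilde L/F$, say $M_i=F(\sqrt{\alpha_i})$, are cyclically permuted by $\sigma$. The map of the proposition should send $K=F(\sqrt\alpha)$ to the fixed field of a Sylow $3$-subgroup of $\Gal(\widetilde L_K/\QQ)$, where $\widetilde L_K:=F(\sqrt\alpha,\sqrt{\sigma\alpha})$.

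\emph{Surjectivity.} Given $L$ as above, the three classes $\alpha_1,\alpha_2,\alpha_3$ of the $V$-extension $\widetilde L/F$ satisfy $\alpha_1\alpha_2\alpha_3\in F^{\times2}$, and since $\sigma$ permutes the $M_i$ cyclically one has $\sigma\alpha_1\equiv\alpha_2$ and $\sigma^2\alpha_1\equiv\alpha_3$ modulo $F^{\times2}$. Hence $N_{F/\QQ}(\alpha_1)=\alpha_1\cdot\sigma\alpha_1\cdot\sigma^2\alpha_1\equiv\alpha_1\alpha_2\alpha_3\equiv 1$ in $F^\times/F^{\times2}$, so $N_{F/\QQ}(\alpha_1)$ is a square in $F^\times$; being rational with $[F:\QQ]=3$ odd, it is a square in $\QQ^\times$. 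Thus each $M_i$ lies in the source of the map, and all three are sent to $L$: from any single $M_i$ one recovers $\widetilde L=M_i\cdot\sigma(M_i)$, and hence $L$ as its distinguished quartic subfield.

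\emph{The reverse construction and the $3:1$ property.} Start from $K=F(\sqrt\alpha)$ with $\alpha\notin F^{\times2}$ and $N_{F/\QQ}(\alpha)=n^2$ a rational square, and put $\widetilde L:=F(\sqrt\alpha,\sqrt{\sigma\alpha})$. From $\alpha\cdot\sigma\alpha\cdot\sigma^2\alpha=n^2$ we get $\sqrt{\sigma^2\alpha}\in\widetilde L$, so $\widetilde L=F(\sqrt\alpha,\sqrt{\sigma\alpha},\sqrt{\sigma^2\alpha})$, and $\sigma^2\alpha\equiv\alpha\cdot\sigma\alpha\pmod{F^{\times2}}$. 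The crucial point is that the norm hypothesis rules out every degeneracy: if $\alpha$ and $\sigma\alpha$ were dependent in $F^\times/F^{\times2}$ then, $\sigma$ being an automorphism, either $\alpha\in F^{\times2}$ or $\sigma\alpha\equiv\alpha\pmod{F^{\times2}}$, and the latter forces $N_{F/\QQ}(\alpha)\equiv\alpha^3\equiv\alpha\pmod{F^{\times2}}$, hence $\alpha\in F^{\times2}$ again --- both excluded. So $\widetilde L/F$ is a genuine $V$-extension; $\widetilde L/\QQ$ is Galois (every embedding merely permutes $\{\alpha,\sigma\alpha,\sigma^2\alpha\}$ up to squares); and $\Gal(\widetilde L/\QQ)$, an extension of $\ZZ/3$ by $V$ in which $\ZZ/3$ acts nontrivially (it cyclically permutes $F(\sqrt\alpha),F(\sqrt{\sigma\alpha}),F(\sqrt{\sigma^2\alpha})$), is $A_4$. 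So the fixed field $L$ of a Sylow $3$-subgroup is an $A_4$-quartic with cubic resolvent $F$. For the fibre over such an $L$: its three quadratic subextensions $F(\sqrt{\sigma^i\alpha})$ of $F$ are distinct and all map to $L$; conversely any $K'=F(\sqrt\beta)$ mapping to $L$ satisfies $F(\sqrt\beta,\sqrt{\sigma\beta})=\widetilde L$ (it is the Galois closure of $L$, see below), so $K'\subset\widetilde L$ is a quadratic subextension of $\widetilde L/F$, hence one of the three. This also pins the fibre to size exactly $3$.

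\emph{Discriminant and Galois closure.} Apply the conductor--discriminant formula. With $\one,\chi,\chi',\rho$ the irreducible characters of $A_4$ (dimensions $1,1,1,3$), the quartic corresponds to the permutation character $\mathrm{Ind}_C^{A_4}\one=\one\oplus\rho$, so $\Delta_L=\Cond(\rho)$; the resolvent gives $\Delta_F=\Cond(\chi)\Cond(\chi')$; and $\Delta_{\widetilde L}=\Cond(\chi)\Cond(\chi')\Cond(\rho)^3=\Delta_F\Delta_L^3$. On the other hand the tower formula gives $\Delta_{\widetilde L}=\cN_{F/\QQ}(\fd(\widetilde L/F))\,\Delta_F^{4}$, and conductor--discriminant over $F$ for the $V$-extension $\widetilde L/F$ gives $\fd(\widetilde L/F)=\fd(M_1/F)\fd(M_2/F)\fd(M_3/F)$; since the $M_i$ are $\sigma$-conjugate, $\cN_{F/\QQ}(\fd(\widetilde L/F))=\cN(\fd(K/F))^3$. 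Comparing, $\Delta_F\Delta_L^3=\cN(\fd(K/F))^3\Delta_F^4$, whence $\Delta_L=\cN(\fd(K/F))\,\Delta_F$. Finally, the Galois closure of $K/\QQ$ is the fixed field of the normal core of $\Gal(\widetilde L/K)$, an order-$2$ subgroup contained in $V$; the three order-$2$ subgroups of $A_4$ meet trivially, so this core is trivial and the Galois closure of $K$ is $\widetilde L$, which is also the Galois closure of $L$. I expect the main obstacle to be the third paragraph --- verifying that $N_{F/\QQ}(\alpha)\in\QQ^{\times2}$ is precisely strong enough to force $\alpha,\sigma\alpha$ to be independent in $F^\times/F^{\times2}$ and the Galois group of $\widetilde L/\QQ$ to be exactly $A_4$ (not a proper subextension nor $\ZZ/3\times V$), and to pin the fibre to size exactly $3$ --- whereas the norm-is-a-square computation and the conductor--discriminant and tower bookkeeping are routine.
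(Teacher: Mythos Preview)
Your proof is correct and complete. The paper does not prove this proposition at all; it is quoted verbatim as \cite{MR3215550}*{Thm.~2.2} (Cohen--Thorne) and used as a black box. So there is no ``paper's own proof'' to compare against beyond the original reference.

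That said, your argument is essentially the standard one and matches what Cohen--Thorne do: realise $\widetilde L$ as $K\cdot\sigma(K)$, use the square-norm hypothesis to force $\alpha,\sigma\alpha$ independent in $F^\times/F^{\times2}$ (your key step, handled correctly --- the only dependence possible is $\sigma\alpha\equiv\alpha$, which via $N_{F/\QQ}(\alpha)\equiv\alpha^3\equiv\alpha$ contradicts $\alpha\notin F^{\times2}$), identify the Galois group as the unique order-$12$ extension of $\ZZ/3$ by $V$ with nontrivial action, and read off the $3:1$ fibres as the three quadratic subfields of $\widetilde L/F$. Your discriminant computation via conductor--discriminant on both the $A_4$-tower and the $V$-extension over $F$, combined with the tower formula, is clean; the Galois-closure claim is immediate from $K\subset\widetilde L$ and $K\cdot\sigma(K)=\widetilde L$ (your normal-core phrasing is equivalent but slightly heavier than needed).

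One cosmetic point: you never explicitly say the map is well-defined independent of the choice of $\alpha$, but this is clear since $\widetilde L = K\cdot\sigma(K)$ depends only on $K$.
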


We recall that the $2$-Selmer group of a number field $k$ is defined to be
\begin{equation*}
    \sel{2}(k) := \{ x \in k^\times/k^{\times 2} : v(x) \equiv 0 \bmod 2 \text{ for all non-archimedean places } v\}.
\end{equation*}

\begin{proposition}[\cite{MR3215550}*{Prop.~3.7}]\label{prop:CTProp3.7}
    Let $F$ be a number field.  There is a bijection between
    \[
    \set{(\fa,u)~:~\substack{\fa\leq \cO_F\textnormal{ is squarefree with square norm}\\ [\fa]\in 2\Cl_F\\u\in\ker\braces{N_{F/\QQ}:\sel{2}(F)\to \sel{2}(\QQ)}}}
    \leftrightarrow
    \set{K/F~:~\substack{\exists\alpha\in F^\times\backslash{F^{\times2}}\textnormal{ such that}\\K=F(\sqrt{\alpha})\textnormal{ and }N_{F/\QQ}(\alpha)\in\QQ^{\times 2}}}\cup\set{F}.\]
    The mapping from left to right is explicitly given for a pair $(\fa,u)$ as follows.  Since $\fa\in 2\Cl_F$, let $\fb\leq \cO_F$ be such that $\fa\fb^{2}=\gp{\alpha}$ is principal and $\alpha$ has square norm, after possibly swapping $\alpha$ with $-\alpha$.  The quadratic extension is $K=F(\sqrt{u\alpha})$
\end{proposition}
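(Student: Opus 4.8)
The plan is to realise both sides of the claimed bijection as subgroups of $F^\times/F^{\times 2}$ and match them by analysing the fibres of the ``$2$-content'' homomorphism to ideals. Recall that $\delta\mapsto F(\sqrt{\delta})$ gives a bijection from $F^\times/F^{\times 2}$ onto the set of quadratic field extensions of $F$ together with $F$ itself. Since any two generators of a given quadratic $K/F$ differ by a square, their $F/\QQ$-norms lie in the same class of $\QQ^\times/\QQ^{\times 2}$; hence the right-hand set of the proposition corresponds, under this bijection, precisely to the subgroup $\ker\!\big(N_*\colon F^\times/F^{\times 2}\to\QQ^\times/\QQ^{\times 2}\big)$, where $N_*$ is induced by $N_{F/\QQ}$. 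It therefore suffices to produce a bijection between $\ker N_*$ and the set of pairs $(\fa,u)$.

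Next I would set up the relevant exact sequence. For $\alpha\in F^\times$ let $\fa(\alpha)$ be the product of the primes $\fp$ with $v_\fp(\alpha)$ odd; this depends only on $[\alpha]$ and defines a homomorphism $\phi\colon F^\times/F^{\times 2}\to I_F/I_F^2$ whose kernel is, by definition, $\sel{2}(F)$. Writing $(\alpha)=\fa(\alpha)\fc^2$ shows $[\fa(\alpha)]=-2[\fc]\in 2\Cl_F$; conversely, given a squarefree $\fa$ with $[\fa]\in 2\Cl_F$, pick $\fb$ with $\fa\fb^2=(\alpha)$ principal, and then $\phi([\alpha])=\fa$. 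So $\phi$ fits into a short exact sequence $1\to\sel{2}(F)\to F^\times/F^{\times 2}\xrightarrow{\phi}\mathcal{I}\to 1$, where $\mathcal{I}$ is the group of squarefree ideals $\fa$ with $[\fa]\in 2\Cl_F$. The key observation linking this to norms is the valuation comparison: for every rational prime $p$,
\[
v_p\big(|N_{F/\QQ}(\alpha)|\big)=v_p(\cN((\alpha)))=\sum_{\fp\mid p}f_\fp v_\fp(\alpha)\equiv\sum_{\fp\mid p}f_\fp v_\fp(\fa(\alpha))=v_p(\cN(\fa(\alpha)))\pmod 2,
\]
so $|N_{F/\QQ}(\alpha)|$ is a perfect square if and only if $\cN(\fa(\alpha))$ is. Moreover $[-1]\in\sel{2}(F)$, and (in the case of interest, $F$ cyclic cubic, so $[F:\QQ]$ odd) $N_{F/\QQ}(-\alpha)=-N_{F/\QQ}(\alpha)$, so the sign of the norm can always be adjusted inside a fibre of $\phi$ by replacing $\alpha$ with $-\alpha$.

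Combining these facts: $\phi^{-1}(\fa)$ meets $\ker N_*$ if and only if $\fa\in\mathcal I$ and $\cN(\fa)$ is a perfect square — exactly the index set of the left-hand side. When this holds, choose $\fb$ with $\fa\fb^2=(\alpha)$ principal and, after possibly swapping $\alpha$ with $-\alpha$, arrange $N_{F/\QQ}(\alpha)\in\QQ^{\times 2}$; then $\phi^{-1}(\fa)=[\alpha]\cdot\sel{2}(F)$, and $[u\alpha]\in\ker N_*$ (for $u\in\sel{2}(F)$) if and only if $N_{F/\QQ}(u)\in\QQ^{\times 2}$. Since $u\in\sel{2}(F)$ already forces $|N_{F/\QQ}(u)|$ to be a square (hence $N_{F/\QQ}(u)\in\sel{2}(\QQ)=\{\pm1\}$ in $\QQ^\times/\QQ^{\times2}$), this last condition is precisely $u\in\ker\!\big(N_{F/\QQ}\colon\sel{2}(F)\to\sel{2}(\QQ)\big)$; and $u\mapsto[u\alpha]$ is injective because $\sel{2}(F)\hookrightarrow F^\times/F^{\times 2}$. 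Partitioning $\ker N_*$ along $\phi$ now yields
\[
\ker N_* \;=\; \bigsqcup_{\fa}\big\{\,[u\alpha]\ :\ u\in\ker(N_{F/\QQ}\colon\sel{2}(F)\to\sel{2}(\QQ))\,\big\},
\]
the union over squarefree $\fa$ with $[\fa]\in 2\Cl_F$ and $\cN(\fa)$ a square. Translating back through $\delta\mapsto F(\sqrt{\delta})$ gives exactly the asserted bijection, with the explicit formula $K=F(\sqrt{u\alpha})$, and with the class $\fa=\cO_F$ together with $u=[\alpha]^{-1}$ as the unique pair mapping to $F$.

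The main obstacle is the bookkeeping in the third paragraph: one must verify that the Selmer-norm condition on $u$ is precisely what cuts each fibre of $\phi$ down to its intersection with $\ker N_*$, reconciling the group-theoretic count with the hands-on construction via $\fa\fb^2=(\alpha)$ and the sign normalisation. Everything else is the formal exact-sequence manipulation and the valuation identity, which are routine.
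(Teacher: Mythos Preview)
The paper does not prove this proposition: it is quoted verbatim from Cohen--Thorne \cite{MR3215550}*{Prop.~3.7} with no accompanying argument, so there is no in-paper proof to compare against. Your approach --- identifying the right-hand side with $\ker\big(N_*\colon F^\times/F^{\times2}\to\QQ^\times/\QQ^{\times2}\big)$, fibring over the squarefree-ideal map $\phi$ with kernel $\sel{2}(F)$, and then reading off the norm condition fibre-by-fibre via the valuation identity --- is exactly the natural one and is essentially how Cohen--Thorne argue.

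Two small remarks. First, your sign adjustment $N_{F/\QQ}(-\alpha)=-N_{F/\QQ}(\alpha)$ requires $[F:\QQ]$ odd; you flag this (``in the case of interest, $F$ cyclic cubic''), but the proposition as stated in the paper says ``Let $F$ be a number field'', which is a slight over-statement inherited from the citation context --- Cohen--Thorne work with cubic $F$. Second, your final clause ``with the class $\fa=\cO_F$ together with $u=[\alpha]^{-1}$ as the unique pair mapping to $F$'' is garbled: for $\fa=\cO_F$ one may take $\alpha=1$, and then $F(\sqrt{u})=F$ forces $u=1$, so the distinguished pair is simply $(\cO_F,1)$. Neither point affects the substance of the argument.
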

To understand how the discriminant condition passes through this correspondence, we have the following.

\begin{proposition}[\cite{MR3215550}*{Prop.~3.3}]\label{prop:CTProp3.3}
    Let $F$ be a number field and $K=F(\sqrt{\alpha})$ be a quadratic extension of $F$.  Write $\alpha\cO_F=\fa\fb^2$ for some squarefree ideal $\fa\leq \cO_F$ and assume that $\alpha$ is chosen so that $\fb$ is coprime to $2$.

    The relative discriminant $\fd(K/F)$ is given by the formula $\fd(K/F)=(4/\fc_\fa^2)\fa$ where $\fc=\fc_\fa$ is the largest (w.r.t. divisibility) ideal of $\cO_F$ with $\fc\mid 2\cO_F$ such that $\fc$ is coprime to $\fa$ and the congruence $x^2\equiv \alpha\mod \fc^2$ has a solution in $F$.
\end{proposition}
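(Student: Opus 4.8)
The plan is to prove $\fd(K/F)=(4/\fc_\fa^2)\fa$ by comparing $\fp$-adic valuations of both sides for each prime $\fp$ of $\cO_F$, since this is a local question: $v_\fp(\fd(K/F))$ depends only on the \'etale algebra $K\otimes_F F_\fp$. Writing $c_\fp:=v_\fp(\fc_\fa)$ --- which by the definition of $\fc_\fa$ equals $0$ when $\fp\mid\fa$, and otherwise is the largest $c$ with $c\le v_\fp(2)$ such that $x^2\equiv\alpha\bmod\fp^{2c}$ is solvable in $\cO_F$ --- the task reduces to checking, for each $\fp$, that $v_\fp(\fd(K/F))=v_\fp(4)-2c_\fp+v_\fp(\fa)$. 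The left side is $0$ when $\fp$ splits in $K$ or $\alpha\in F_\fp^{\times2}$, and is otherwise the valuation of the local discriminant of the quadratic field $K_\fp:=F_\fp(\sqrt\alpha)$ over $F_\fp$, so everything is a computation inside $F_\fp$.

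First I would dispatch the tame and evidently ramified primes. For $\fp\nmid2$ we have $v_\fp(4)=c_\fp=0$ and must show $v_\fp(\fd(K/F))=v_\fp(\fa)\in\{0,1\}$: writing $\alpha$ locally as a uniformiser power times a unit, $F_\fp(\sqrt\alpha)/F_\fp$ is unramified when $v_\fp(\alpha)$ is even and tamely ramified of discriminant exponent $1$ when $v_\fp(\alpha)$ is odd, and $v_\fp(\fa)\equiv v_\fp(\alpha)\bmod2$ since $\alpha\cO_F=\fa\fb^2$. For $\fp\mid2$ with $\fp\mid\fa$, the hypothesis that $\fb$ is coprime to $2$ forces $v_\fp(\alpha)=v_\fp(\fa)=1$, so $\theta=\sqrt\alpha$ is a uniformiser of the totally ramified extension $K_\fp/F_\fp$; hence $\cO_{K_\fp}=\cO_{F_\fp}[\theta]$, and $\fd(K_\fp/F_\fp)$ is generated by $\disc(X^2-\alpha)=4\alpha$, of valuation $v_\fp(4)+v_\fp(\fa)$, which matches since $c_\fp=0$.

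The hard part --- and I expect the main obstacle --- is the wildly ramified unit case: $\fp\mid2$, $\fp\nmid\fa$, so $\alpha$ is a $\fp$-adic unit. Put $e=v_\fp(2)$, fix a uniformiser $\pi$ of $F_\fp$, and let $\pi^m$ be the quadratic defect of $\alpha$, that is $m=\sup_{x\in F_\fp}v_\fp(\alpha-x^2)$, with $m=\infty$ exactly when $\alpha\in F_\fp^{\times2}$; then $c_\fp=\min(e,\lfloor m/2\rfloor)$, so the target becomes $v_\fp(\fd(K/F))=2(e-c_\fp)$. The key arithmetic input is that the residue field has characteristic $2$, hence squaring is surjective on it; a short successive-approximation argument then shows that, after multiplying $\alpha$ by a square, one may assume $\alpha=1+\pi^m\omega$ with $\omega$ a unit (or $\alpha=1$), and that $m$ is $\infty$, or $2e$, or an odd integer $\le 2e-1$. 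Set $u_2:=2/\pi^e\in\cO_{F_\fp}^\times$ and $\theta=(\sqrt\alpha-1)/\pi^e$, a root of $X^2+u_2X-\omega$. When $m=\infty$ or $m=2e$, the reduction of this polynomial modulo $\fp$ is separable and is irreducible precisely when $\alpha\notin F_\fp^{\times2}$ (using $1+\fp^{2e+1}\subseteq F_\fp^{\times2}$), so $K_\fp/F_\fp$ is unramified and $v_\fp(\fd(K/F))=0=2(e-c_\fp)$. When $m$ is odd, I would instead take $\theta'=(\sqrt\alpha-1)/\pi^{(m-1)/2}$, which satisfies the Eisenstein polynomial $X^2+(2/\pi^{(m-1)/2})X-\pi\omega$ over $\cO_{F_\fp}$ (its linear coefficient has valuation $e-(m-1)/2\ge1$ because $m\le2e-1$); then $\cO_{K_\fp}=\cO_{F_\fp}[\theta']$, so $\fd(K_\fp/F_\fp)$ is generated by the discriminant $4/\pi^{m-1}+4\pi\omega$, of valuation $2e+1-m=2(e-c_\fp)$. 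Alternatively this last case is exactly the classical analysis of quadratic defects and could be quoted (e.g.\ from O'Meara, \emph{Introduction to Quadratic Forms}, \S63). Taking the product of these local identities over all $\fp$ then gives $\fd(K/F)=(4/\fc_\fa^2)\fa$.
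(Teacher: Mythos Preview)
Your proof is correct. The paper itself does not give a proof of this proposition; it is quoted from Cohen--Thorne \cite{MR3215550}*{Prop.~3.3} and used as a black box. Your prime-by-prime computation via quadratic defects is essentially the classical argument (and is in spirit the same as the one in the cited reference): the tame primes and the case $\fp\mid 2$, $\fp\mid\fa$ are immediate from the Eisenstein criterion, while the wild unit case $\fp\mid 2$, $\fp\nmid\fa$ reduces cleanly to the trichotomy $m=\infty$, $m=2e$, or $m$ odd with $1\le m\le 2e-1$ for the quadratic defect, giving the discriminant exponent $2(e-c_\fp)$ in each subcase. The only cosmetic remark is that in your opening paragraph the conditions ``$\fp$ splits in $K$'' and ``$\alpha\in F_\fp^{\times 2}$'' are the same thing for a quadratic extension; the inert unramified case (which also gives exponent~$0$) is not listed there but is handled correctly in your subsequent casework.
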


\subsection{Proof of \Cref{thm:fixed_cubic}}
We begin by computing the kernel in Proposition~\ref{prop:CTProp3.7}.

\begin{lemma}\label{lem:selmercount} Let $F/\QQ$ be a cyclic cubic field.  Then
\[\#\ker\braces{N_{F/\QQ}:\sel{2}(F)\to \sel{2}(\QQ)}= 4\#\Cl_F[2].\] 
\end{lemma}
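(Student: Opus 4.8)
The plan is to compute $\#\sel{2}(F)$ via the standard exact sequence
\[0 \to \cO_F^\times/\cO_F^{\times 2} \to \sel{2}(F) \to \Cl_F[2] \to 0,\]
which gives $\#\sel{2}(F) = \#(\cO_F^\times/\cO_F^{\times 2}) \cdot \#\Cl_F[2]$. Since $F$ is a cyclic cubic field it is totally real with unit rank $2$, so by Dirichlet's unit theorem $\cO_F^\times \cong \{\pm 1\} \times \Z^2$ and hence $\#(\cO_F^\times/\cO_F^{\times 2}) = 2^3 = 8$; thus $\#\sel{2}(F) = 8\#\Cl_F[2]$. The same reasoning over $\QQ$ gives $\#\sel{2}(\QQ) = 2$ (generated by $-1$), since $\Cl_\QQ$ is trivial and $\ZZ^\times/\ZZ^{\times 2}$ has order $2$.

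Next I would analyze the norm map $N_{F/\QQ} \colon \sel{2}(F) \to \sel{2}(\QQ)$. The key point is to show this map is \emph{surjective}, so that the kernel has index $2$ and therefore order $8\#\Cl_F[2]/2 = 4\#\Cl_F[2]$, as claimed. Surjectivity follows because $\sel{2}(\QQ) = \langle -1 \rangle$ and $-1 \in \sel{2}(F)$ maps to $N_{F/\QQ}(-1) = (-1)^3 = -1$ (using that $[F:\QQ] = 3$ is odd); so $-1 \in \sel{2}(F)$ surjects onto $\sel{2}(\QQ)$. Hence $\#\ker = \#\sel{2}(F)/\#\sel{2}(\QQ) = 8\#\Cl_F[2]/2 = 4\#\Cl_F[2]$.

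\textbf{Main obstacle.} The routine parts are the structure of $\sel 2$ and the unit computation; the one point requiring a little care is verifying that the relevant classes are genuinely nontrivial in the quotient groups — e.g. that $-1$ really is a nonsquare unit in $F$ (true since $F$ is totally real, so $-1 < 0$ under any embedding and cannot be a square) and that the exact sequence above is exact on the right, which is the standard fact that every $2$-torsion ideal class contains an ideal $\fa$ with $\fa^2 = (\alpha)$ principal, giving a class $\alpha \in \sel 2(F)$ mapping to it. None of this is deep, so I expect the proof to be short; the only subtlety worth stating explicitly is the parity argument $N_{F/\QQ}(-1) = -1$ which is what forces surjectivity of the norm and pins down the index.
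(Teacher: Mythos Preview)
Your proof is correct and follows essentially the same approach as the paper: both compute $\#\sel{2}(F)=8\#\Cl_F[2]$ via the exact sequence $0\to\cO_F^\times/\cO_F^{\times2}\to\sel{2}(F)\to\Cl_F[2]\to 0$ together with Dirichlet's unit theorem, and both use the odd-degree fact $N_{F/\QQ}(-1)=(-1)^3=-1$ to see that the norm hits all of $\sel{2}(\QQ)=\{\pm1\}$. The only cosmetic difference is that the paper phrases the last step as ``multiplication by $-1$ swaps the norm image'' rather than explicitly exhibiting a preimage of $-1$, but this is the same observation.
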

\begin{proof}
Note that $\sel{2}(\QQ)=\set{\pm1}$, and moreover the involution on $\sel{2}(F)$ induced by multiplying by $-1$ swaps the image of the norm: for $\alpha\in \sel{2}(F)$ we have $N_{F/\QQ}(-\alpha)=-N_{F/\QQ}(\alpha)$.  Hence 
\[\#\ker(N_{F/\QQ}\sel{2}(F)\to \sel{2}(\QQ))=\frac{\#\sel{2}(F)}{2}.\]
We then use the short exact sequence (see e.g.~\cite{MR1728313}*{Prop.~5.2.8})
\begin{equation}
    \begin{tikzcd}
        0\arrow[r]&\cO_F^\times/\cO_F^{\times 2}\arrow[r]&\sel{2}(F)\arrow[r]&\Cl_F[2]\arrow[r]&0,
    \end{tikzcd}
\end{equation}
and Dirichlet's unit theorem, to obtain $\#\sel{2}(F)=8\#\Cl_F[2]$.
\end{proof}

We thus obtain the following.

\begin{lemma}\label{lem:fieldstoideals}
Let $F/\QQ$ be a cyclic cubic field.  Then for $X\geq 1$
    \begin{align*}
    &\#\set{L/\QQ~:~
    \begin{array}{l}
            L/\QQ\textnormal{ is an }A_4\textnormal{-quartic with}\\
        \textnormal{cubic resolvent $F$ and }\Delta_L\leq X
    \end{array}} \\
    &=
    \frac{4\#\Cl_F[2]}{3}
    \#\set{\fa\leq \cO_F~:~
    \begin{array}{l}
            \cN(\fa)\textnormal{ squarefree},\\
            \gcd(\cN(\fa),\Delta_F)=1\\
            \, [\fa]\in 2\Cl_F\leq \Cl_F, \\
            \cN(\fa)\leq \braces{\frac{X}{\Delta_F\cN(4/\fc_\fa^2)}}^{1/2}
    \end{array}}
     - \frac{1}{3}
    \end{align*}
    where $\fc_\fa$ is as defined in \Cref{prop:CTProp3.3}.
\end{lemma}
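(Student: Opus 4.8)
The plan is to chain together the Cohen--Thorne parametrisations of Propositions~\ref{prop:CTThm2.2}, \ref{prop:CTProp3.7}, \ref{prop:CTProp3.3}, the Selmer count of Lemma~\ref{lem:selmercount}, and an elementary reindexing of ideals. First, by Proposition~\ref{prop:CTThm2.2} the map $K\mapsto L$ has all fibres of size $3$ and satisfies $\Delta_L=\cN(\fd(K/F))\Delta_F$; since the right-hand side is constant on fibres, the quantity on the left of the lemma equals $\tfrac13$ times the number of $K/F$ in the source set of that proposition with $\cN(\fd(K/F))\leq X/\Delta_F$. Next, by Proposition~\ref{prop:CTProp3.7} this source set, enlarged by the trivial extension $F$, is in bijection with the set of pairs $(\fa,u)$ with $\fa\leq\cO_F$ squarefree of square norm, $[\fa]\in 2\Cl_F$, and $u\in\ker(N_{F/\QQ}\colon\sel{2}(F)\to\sel{2}(\QQ))$; under this bijection $F$ corresponds to a single degenerate pair (with $\fa=\cO_F$), which after division by $3$ produces the additive correction $-\tfrac13$. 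Proposition~\ref{prop:CTProp3.3} gives $\fd(K/F)=(4/\fc^2)\fa$, hence $\cN(\fd(K/F))=\cN(4/\fc^2)\cN(\fa)$ with $\fc\mid 2\cO_F$ as described there, and Lemma~\ref{lem:selmercount} shows that $u$ ranges over a set of size $4\#\Cl_F[2]$, giving the factor $\tfrac{4\#\Cl_F[2]}{3}$.

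It then remains to match the square-norm ideals appearing above with the squarefree-norm ideals in the statement. In a cyclic cubic field, a squarefree ideal of square norm is automatically coprime to $\Delta_F$ and supported on completely split primes, containing exactly two of the three primes above each rational prime in its support: the ramified and inert primes above $p$ have residue degree $1$ and $3$ respectively, so including one of them would contribute an odd power of $p$ to the norm. Sending such an ideal to the product, over the primes $p$ in its support, of the \emph{remaining} prime above $p$ defines a bijection $\fa\mapsto\fa^{\vee}$ onto the squarefree ideals of squarefree norm coprime to $\Delta_F$; since $\fp_1\fp_2\fp_3=(p)$ is principal we have $[\fa^{\vee}]=-[\fa]$, so membership in $2\Cl_F$ is preserved, and $\cN(\fa)=\cN(\fa^{\vee})^2$. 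Substituting this into $\cN(4/\fc^2)\cN(\fa)\leq X/\Delta_F$ turns the bound into $\cN(\fa^{\vee})\leq\bigl(X/(\Delta_F\cN(4/\fc^2))\bigr)^{1/2}$; relabelling $\fa^{\vee}$ as $\fa$ and writing $\fc_\fa$ for $\fc$ yields the claimed formula.

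Everything above is formal apart from the behaviour at the primes above $2$, which is where the content lies: the ideal $\fc$ of Proposition~\ref{prop:CTProp3.3} is attached to the element $u\alpha$ defining $K=F(\sqrt{u\alpha})$, so one must carefully track how it — equivalently $\cN(4/\fc^2)$ — relates to $\fa$, using that $u$ is a unit at every prime above $2$ (so only $\alpha$ and $u$ modulo squares at these primes are relevant) in order to see that the quantity appearing in the statement is well posed. This comes down to a congruence computation modulo $4\cO_F$. Together with the (elementary) bookkeeping needed to isolate the single degenerate pair responsible for the $-\tfrac13$, this local analysis at $2$ is the only step requiring real care.
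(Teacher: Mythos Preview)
Your approach is the same as the paper's: apply Propositions~\ref{prop:CTThm2.2}, \ref{prop:CTProp3.7}, \ref{prop:CTProp3.3} and Lemma~\ref{lem:selmercount} in sequence, then reindex square-norm squarefree ideals by squarefree-norm ideals. Your map $\fa\mapsto\fa^{\vee}$ is exactly the inverse of the paper's map $\fa\mapsto\cN(\fa)\fa^{-1}$ (which the paper writes going from the squarefree-norm side to the square-norm side), and your justification that such ideals are supported only on split primes is the content the paper leaves implicit in asserting this is a bijection.

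On the $2$-adic point you raise: you are right that $\fc$ in Proposition~\ref{prop:CTProp3.3} is a priori attached to $u\alpha$ rather than to $\fa$ alone, so the notation $\fc_\fa$ in the lemma is not obviously well-defined. The paper's proof does not address this either; it simply records the formula. For the downstream application (the proof of Theorem~\ref{thm:fixed_cubic}) only the crude bound $\cN(4/\fc_\fa^2)\leq 64$ is used, and that bound holds for every $u$, so the ambiguity is harmless there --- but as a statement of the lemma itself your caution is warranted and is more careful than the paper's treatment.
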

\begin{proof}  We apply Propositions \ref{prop:CTThm2.2}, \ref{prop:CTProp3.7}, and \ref{prop:CTProp3.3}, and \Cref{lem:selmercount} to obtain
    \begin{align*}
    &\#\set{L/\QQ~:~\substack{L/\QQ\textnormal{ is an }A_4\textnormal{-quartic with}\\\textnormal{cubic resolvent }F\textnormal{ and}\\\Delta_L\leq X}}  \\
    &= \frac{1}{3}\set{K/F~:~\substack{\exists\alpha\in F^\times\backslash{F^{\times2}}\textnormal{ such that}\\K=F(\sqrt{\alpha})\textnormal{, }N_{F/\QQ}(\alpha)\in\QQ^{\times 2}\\\textnormal{and }\cN(\fd(K/F))\leq X/\Delta_F}}
    \\&= \frac{4\#\Cl_F[2]}{3}\#\set{\fa\leq \cO_F~:~\substack{
    \fa\textnormal{ is squarefree with square norm,}\\ 
    [\fa]\in 2\Cl_F\textnormal{, and }
    \cN(\fa)\leq \frac{X}{\Delta_F\cN(4/\fc_\fa^2)}}
    } - \frac{1}{3}.
    \end{align*}
    We then note that for every $B\geq 1$ there is a bijection
    \begin{align*}
    \set{\fa\leq \cO_F~:~\substack{
    \fa\textnormal{ is squarefree with square norm,}\\ 
    [\fa]\in 2\Cl_F\textnormal{, and }
    \cN(\fa)\leq B}
    }
    &\leftrightarrow
    \#\set{\fa\leq \cO_F~:~\substack{
    \cN(\fa)\textnormal{ squarefree}\\
    \gcd(\cN(\fa),\Delta_F)=1\\
    [\fa]\in 2\Cl_F\leq \Cl_F\\
    \cN(\fa)\leq {B}^{1/2}
    }}\\
    \fa&\mapsto \cN(\fa)\fa^{-1},
    \end{align*}
    so the claim is immediate.
\end{proof}

By \Cref{lem:fieldstoideals} and \Cref{thm:idealcounting}, applying the bound $\cN(4/\fc_\fa^2)\leq \cN(4)=4^3$ we obtain
\begin{align*}
    &\#\set{L/\QQ~:~
    \begin{array}{l}
    L/\QQ\textnormal{ is an }A_4\textnormal{-quartic with}\\
    \textnormal{ cubic resolvent }F\textnormal{ and }\Delta_L\leq X
    \end{array} } 
    \\\geq&
    \frac{4\#\Cl_F[2]}{3}
    \#\set{\fa\leq \cO_F~:~
    \begin{array}{l}
            \cN(\fa)\textnormal{ squarefree},\\
            \gcd(\cN(\fa),\Delta_F)=1\\
            \, [\fa]\in 2\Cl_F\leq \Cl_F, \\
            \cN(\fa)\leq \braces{\frac{X}{4^3\Delta_F}}^{1/2}
    \end{array}}+O\braces{1}
    \\=&\frac{\zeta^*_F(1)L(\gamma_{F,\Delta_F}, 1)}{6\Delta_F^{1/2}}X^{1/2} + 
    O_\epsilon\braces{\#\Cl_F[2] \cdot 2^{3\omega(\Delta_F)}\left(\frac{X}{\Delta_F}\right)^{1/4+\epsilon} \log(X)^{79}}.
\end{align*}
Note that our application of \Cref{thm:idealcounting} requires the hypothesis $X\geq 64\Delta_F^{1+1/\epsilon}$ in the theorem statement, and in the main term we have used the equality  $\#\Cl_F[2] = \#\Cl_F/\#2\Cl_F$.
The stated formula in Theorem~\ref{thm:fixed_cubic} now follows from the explicit Euler product for $L(\gamma_{F,\Delta_F},1)$ shown in Lemma \ref{lem:L_gamma_F}.
\qed

\subsection{Proof of Theorem \ref{thm:main}}
We will sum the asymptotic of \Cref{thm:fixed_cubic} over cyclic cubic fields $F$ such that $3$ is unramified in $F$ and ${\Delta_F}\leq X^{\delta}$ for some small $\delta>0$.

For the error term, we use the bound $\#\Cl_F[2] \ll_\varepsilon \Delta_F^{(1+\varepsilon)/2}$ from the Brauer--Siegel Theorem \cite{MR1282723}*{Ch.~XVI} and $2^{3\omega(\Delta_F)} \ll \Delta_F^{\varepsilon/2}$ for any $\varepsilon >0$. This shows that the error becomes
\[\sum_{\substack{F\textnormal{ cyclic cubic}\\\textnormal{unram. at }3\\\abs{\Delta_F}\leq X^\delta}}O\braces{\#\Cl_F[2] \cdot 2^{3\omega(\Delta_F)}\log(\Delta_F)^{3}X^{1/4} \log(X)^{71}}=O_\delta\braces{X^{1/4+\delta\left(\epsilon + \tfrac{1}{2}\right)}\log(X)^{74}},\]

on using \eqref{eqn:number_F}.
Taking $\delta<\frac{1}{2}$, we can then choose $\varepsilon$ small enough so that $\epsilon<\frac{1-2\delta}{4\delta}$.  In particular, the sum of the error term contributes $o_\delta(X^{1/2})$, so in order to prove Theorem \ref{thm:main} it is sufficient to show that the sum of the main term is $\gg X^{1/2}\log(X)$.

The product over unramified primes in the constant $c_F$ from \Cref{thm:fixed_cubic} is absolutely convergent and uniformly bounded above and below independently of $F$.  Hence the main term is
\begin{equation} \label{eqn:gg}
    \gg X^{1/2}\sum_{\substack{F\textnormal{ cyclic cubic}\\\textnormal{unram. at }3\\\Delta_F\leq X^\delta}}\frac{\zeta_F^*(1) \prod_{p \mid \Delta_F}(1- 1/p)}{{\Delta_F}^{1/2}}.
\end{equation} 
To handle the summation, the relation $\prod_{p \mid n}(1- 1/p) = \sum_{d \mid n} \mu(d)/d$ gives
\begin{equation} \label{eqn:d_sum}
 \sum_{d \leq X^{\delta}} \frac{\mu(d)}{d} \sum_{\substack{F\textnormal{ cyclic cubic}\\\textnormal{unram. at }3\\ \Delta_F\leq X^\delta \\ d \mid \Delta_F  }}\frac{\zeta_F^*(1)}{\Delta_F^{1/2}}.
\end{equation}
We first show that we can assume that $d$ is at worst a power of log in size.

\begin{lemma}
\[ \sum_{(\log X)^{1/\delta} \leq d \leq X^{\delta}} \frac{\mu(d)}{d} \sum_{\substack{F\textnormal{ cyclic cubic}\\\textnormal{unram. at }3\\ \Delta_F\leq X^\delta \\ d \mid \Delta_F  }}\frac{\zeta_F^*(1)}{\Delta_F^{1/2}} \ll_{\delta} (\log X)^{5-1/\delta}.\]
\end{lemma}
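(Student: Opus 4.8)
The plan is to estimate the inner sum over $F$ crudely, by combining a pointwise bound on $\zeta_F^*(1)$ with the parametrisation of cyclic cubic fields by their conductor, and then to sum the resulting bound over the tail $d > (\log X)^{1/\delta}$. The whole point of the lemma is that this tail is negligible compared with the main term, which comes from $d \leq (\log X)^{1/\delta}$ (where \Cref{thm:LTAvg_d} applies), so a lossy argument suffices.

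I would first record that $\zeta_F^*(1) \ll (\log \Delta_F)^2$ for every cyclic cubic field $F$. Indeed, factoring $\zeta_F(s) = \zeta(s)L(s,\chi_F)L(s,\bar\chi_F)$ with $\chi_F$ a cubic Dirichlet character associated to $F$ gives $\zeta_F^*(1) = |L(1,\chi_F)|^2$, and applying \Cref{lem:Ltruncated} with truncation parameter equal to the conductor $\Delta_F^{1/2}$ yields $L(1,\chi_F) \ll \log \Delta_F$ (one may instead cite \cite{Lou00}*{Thm.~1}). As $\Delta_F \leq X^\delta$ in the range of interest, this gives $\zeta_F^*(1) \ll_\delta (\log X)^2$.

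Next I would parametrise the fields by conductor. A cyclic cubic field unramified at $3$ has $\Delta_F = f^2$ for a squarefree $f$ that is a product of primes $\equiv 1 \bmod 3$, and there are at most $2^{\omega(f)}$ such fields of given conductor $f$. The inner sum over $F$ is empty unless $d$ is itself a squarefree product of primes $\equiv 1 \bmod 3$ (since $\mu(d)\neq 0$ and no prime $\not\equiv 1 \bmod 3$ can divide $\Delta_F$), so we may assume this. Since $d$ is squarefree the condition $d \mid \Delta_F = f^2$ becomes $d \mid f$; writing $f = dm$ with $\gcd(m,d) = 1$, using $2^{\omega(dm)} = 2^{\omega(d)}2^{\omega(m)}$ and the elementary bound $\sum_{m \leq Y}2^{\omega(m)}/m \leq \sum_{m\leq Y}\tau(m)/m \ll (\log Y)^2$, I obtain
\[
\sum_{\substack{F \text{ cyclic cubic} \\ \text{unram. at } 3 \\ \Delta_F \leq X^\delta \\ d \mid \Delta_F}} \frac{\zeta_F^*(1)}{\Delta_F^{1/2}} \;\ll\; (\log X)^2 \cdot \frac{2^{\omega(d)}}{d} \sum_{\substack{m \leq X^{\delta/2} \\ m \text{ squarefree}}} \frac{2^{\omega(m)}}{m} \;\ll\; \frac{2^{\omega(d)}}{d}\,(\log X)^4 .
\]

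Finally I would sum over $d$. Multiplying through by $|\mu(d)|/d = \mu^2(d)/d$,
\[
\sum_{(\log X)^{1/\delta} \leq d \leq X^\delta} \frac{|\mu(d)|}{d} \sum_{\substack{F \text{ cyclic cubic} \\ \text{unram. at } 3 \\ \Delta_F \leq X^\delta \\ d \mid \Delta_F}} \frac{\zeta_F^*(1)}{\Delta_F^{1/2}} \;\ll\; (\log X)^4 \sum_{d > (\log X)^{1/\delta}} \frac{\mu^2(d)2^{\omega(d)}}{d^2} ,
\]
and since $\sum_{d\leq Y}\mu^2(d)2^{\omega(d)} \leq \sum_{d\leq Y}\tau(d) \ll Y\log Y$, partial summation bounds the tail sum by $\ll \log\!\big((\log X)^{1/\delta}\big)/(\log X)^{1/\delta} \ll_\delta (\log X)^{1-1/\delta}$, giving $\ll_\delta (\log X)^{5-1/\delta}$ overall. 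The argument is routine; the only points needing care are the conductor bookkeeping — that squarefreeness of $d$ collapses $d\mid f^2$ to $d \mid f$, and that only $d$ supported on primes $\equiv 1 \bmod 3$ contribute — and checking uniformity of the implied constants in $d$, which is transparent since the $d$-dependence has been tracked explicitly as $2^{\omega(d)}/d$.
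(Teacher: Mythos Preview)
Your argument is correct and follows essentially the same route as the paper: bound $\zeta_F^*(1)\ll(\log\Delta_F)^2$ pointwise, parametrise cyclic cubic fields by their (squarefree) conductor $f$ with multiplicity $\leq 2^{\omega(f)}$, write $f=dm$, and then sum the resulting $2^{\omega(d)}/d^2$ over the tail $d>(\log X)^{1/\delta}$ via partial summation. The only cosmetic difference is that you work with $2^{\omega(\cdot)}$ and the exact factorisation $2^{\omega(dm)}=2^{\omega(d)}2^{\omega(m)}$, whereas the paper passes to $\tau(\cdot)$ and uses $\tau(dn)\leq\tau(d)\tau(n)$; both lead to the same $(\log X)^{5-1/\delta}$ bound.
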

\begin{proof}
We use the bound $\zeta_F^*(1) = O((\log \Delta_F)^{2})$ from \cite{Lou00}*{Thm.~1}. We also use that there are $2^{\omega(\Delta_F) -1}$ cyclic cubic fields of the same discriminant.  Thus the sum in the statement is
\[ \ll_\delta (\log X)^2 \sum_{(\log X)^{1/\delta} \leq d \leq X^{\delta}} \frac{1}{d} \sum_{\substack{n \leq X^{\delta/2}\\\textnormal{squarefree}\\ d \mid n }}\frac{\tau(n)}{n}\]
where $\tau$ is the divisor function. We make a change of variables and use that $\tau(dn) \leq \tau(d)\tau(n)$ to obtain
\[ \ll_\delta (\log X)^2 \sum_{(\log X)^{1/\delta} \leq d \leq X^{\delta}} \frac{\tau(d)}{d^2} \sum_{\substack{n \leq X^{\delta/2}}}\frac{\tau(n)}{n}.\]
But then we have by partial summation that
$$\sum_{\substack{n \leq X^{\delta}}}\frac{\tau(n)}{n} \ll_\delta (\log X)^2, \quad 
\sum_{(\log X)^{1/\delta} \leq d\leq X^\delta} \frac{\tau(d)}{d^2} \ll_\delta \log(X)^{1-1/\delta}.$$
Hence the claim holds.
\end{proof}

Therefore, further ensuring that $\delta<\tfrac{1}{4}$, we may assume that $d \leq (\log X)^{1/\delta}$. In which case, Theorem \ref{thm:LTAvg_d} and partial summation yields
$$\sum_{\substack{F\textnormal{ cyclic cubic}\\\textnormal{unram. at }3\\ \Delta_F\leq X^\delta \\ d \mid \Delta_F  }} \frac{\zeta_F^*(1)}{\Delta_F^{1/2}} 
=
\frac{7\pi}{26\sqrt{3}} \alpha \beta_d \cdot \frac{2^{\omega(d)}}{d}\cdot \left(1+\tfrac{1}{2}\log(X^{\delta})\right) + O_{\delta}(1).$$
Therefore combined with \eqref{eqn:gg} and \eqref{eqn:d_sum} we obtain the lower bound 
\begin{align*}
    &\#\set{L/\QQ~:~
    \begin{array}{l}
        L/\QQ\textnormal{ is an }A_4\textnormal{-quartic with } \Delta_L\leq X
    \end{array}}
    \\&\quad\quad \gg_{\delta}X^{1/2}\log(X)\sum_{d\leq \log(X)^{1/\delta}}\frac{\mu(d)2^{\omega(d)}\beta_d}{d^2}
    \end{align*}
which is sufficient for Theorem \ref{thm:main}, provided we show that the summation is non-zero. However the function $\beta_d$ is multiplicative. Hence the leading constant is proportional to
$$\sum_{d}\frac{\mu(d)2^{\omega(d)}\beta_d}{d^2} = \prod_{p \equiv 1 \bmod 3} \left(1 - \frac{2\beta_p}{p^2}\right)$$
with the sum being absolutely convergent and $d$ varying over integers which are only divisible by primes which are $1 \bmod 3$. However a short calculation shows that $0 < \beta_p < 1$ always, whence each local Euler factor is non-zero. This completes the proof of Theorem \ref{thm:main}. \qed

\section{Interpretation via stacks} \label{sec:stacks}

The counting problem in Theorem \ref{thm:fixed_cubic} fits into the recent stacky framework for Malle's conjecture \cites{MR4557890,MR4639951,DardaYasudaDMStacks,LS24}. Namely it can be interpreted as counting rational points of bounded height on the classifying stack of a Galois twist of $(\Z/2\Z)^2$. For completeness, we verify now that our count agrees with \cite{LS24}*{Conj.~9.1}, up to $2$-adic and archimedean factors.
 
We recall some of the relevant framework in our setting. Write $N = (\Z/2\Z)^2 \subset A_4$. Let $\psi:\Gal(\bar{\Q}/\Q) \to \Z/3\Z$ be a surjective homomorphism, with associated cyclic field $F_\psi$. Fixing a choice of section of $A_4 \to \Z/3\Z$, we abuse notation and view $\psi: \Gal(\bar{\Q}/\Q) \to A_4$. Let $N_\psi$ be the inner twist of $N$ with respect to $\psi$ \cite{LS24}*{Def.~2.12}. Explicitly, this is the Galois module with underlying group $N$ but such that $\Gal(\bar{\Q}/\Q)$ acts via $\psi$ permuting the $3$ non-trivial elements; it will also be convenient to view this as a group scheme below. Since $-1 \in \Q$ we have $N_\psi(-1) = N_\psi$. Let $H_\psi$ be the height function on $B N_\psi$ obtained by pulling back the discriminant height on $BA_4$. Take $\Omega$ to be the collection of non-surjective elements of $BN_\psi[k]$, as in \cite{LS24}*{Lem.~2.2}. 

\begin{lemma} \label{lem:groupoid_cardinality}
\begin{align*}
    &\#\set{L/\QQ~:~
    L/\QQ\textnormal{ is an }A_4\textnormal{-quartic with cubic resolvent }F_\psi, \abs{\Delta_L}\leq X} \\
    & = (1/3) \cdot \#\{ \varphi \in BN_\psi[k] \setminus \Omega : H_\psi(\varphi) \leq X\}.
\end{align*}
\end{lemma}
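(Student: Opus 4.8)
The statement equates two countable quantities; the strategy is to produce a bijection (respecting the groupoid/automorphism weighting implicit in stacky point counts) between the two sides, and then track how the discriminant height transforms. First I would recall the dictionary for rational points on the classifying stack $BN_\psi$ over $k = \Q$: a $k$-point of $BN_\psi$ is a $N_\psi$-torsor over $\Spec k$, equivalently a continuous homomorphism $\varphi: \Gal(\bar\Q/\Q) \to N_\psi(\bar\Q) \rtimes \Gal(\bar\Q/\Q)$ splitting the projection, which since $N_\psi$ is the inner twist of $N$ by $\psi$ unwinds to a homomorphism $\Gal(\bar\Q/\Q) \to A_4$ whose composite with $A_4 \to \Z/3\Z$ equals $\psi$ — i.e. an $A_4$-extension (with marked embedding into $\bar\Q$) whose cubic resolvent subfield is $F_\psi$, or a degenerate such extension. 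The subset $\Omega$ of non-surjective elements (in the sense of \cite{LS24}*{Lem.~2.2}) is exactly removed to force the homomorphism $\Gal(\bar\Q/\Q) \to A_4$ to be surjective, i.e. to give a genuine $A_4$-quartic rather than a field with smaller Galois group; this is where one cites \cite{LS24}*{Lem.~2.2} directly.

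Next I would address the passage from marked homomorphisms to isomorphism classes of fields, which is the source of the factor $1/3$. The groupoid $BN_\psi(k)$ has objects the torsors and the point count $\#\{\varphi \in BN_\psi[k]\setminus\Omega : H_\psi(\varphi)\le X\}$ is the weighted count $\sum 1/\#\Aut(\varphi)$. For a surjective $\varphi$ corresponding to an $A_4$-quartic $L$, the automorphism group of the torsor is the centraliser of $\varphi(\Gal)$ in $N_\psi$, which is trivial since the image is all of $A_4$ acting with no fixed nontrivial element of $N$; hence each such $\varphi$ has weight $1$ and the left-hand weighted sum is an honest cardinality of a set of homomorphisms. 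The set of surjections $\Gal(\bar\Q/\Q)\twoheadrightarrow A_4$ lying over $\psi$, up to nothing, maps to isomorphism classes of $A_4$-quartics with cubic resolvent $F_\psi$; the fibre of this map has size $3$, because two such surjections give the same quartic field iff they differ by an inner automorphism of $A_4$ fixing the chosen quotient $A_4\to\Z/3\Z$ (equivalently fixing the image of $\psi$), and the group of such inner automorphisms is $N \cong (\Z/2\Z)^2$ acting on the three index-$2$... — more precisely the relevant conjugation action on the set of point-stabiliser-data has orbits of size $3$. This $3:1$ relationship is exactly the one already encoded in \cite{MR3215550}*{Thm.~2.2} (our Proposition~\ref{prop:CTThm2.2}), so the cleanest route is to match the $3:1$ map there with the $3:1$ map here, giving the factor $(1/3)$.

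Finally I would check that the height matches the discriminant: $H_\psi$ is \emph{defined} as the pullback along $BN_\psi \to BA_4$ of the discriminant height on $BA_4$, and the discriminant height on $BA_4$ evaluated at the torsor corresponding to an $A_4$-quartic $L$ is $\abs{\Delta_L}$ by the standard compatibility of the stacky discriminant height with the field discriminant (this is the normalisation used throughout, and is how Theorem~\ref{thm:fixed_cubic} was phrased). So the conditions $H_\psi(\varphi)\le X$ and $\abs{\Delta_L}\le X$ correspond, and combining the three matchings yields the claimed identity.

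\textbf{Main obstacle.} The routine-looking but genuinely delicate point is the bookkeeping of the factor $1/3$: one must be careful to distinguish the groupoid cardinality $\sum 1/\#\Aut$ from a naive set count, verify the automorphism groups are trivial on the relevant locus, and correctly identify the size-$3$ fibres of ``marked $A_4$-extension over $\psi$'' $\to$ ``$A_4$-quartic with resolvent $F_\psi$''. I expect the bulk of the argument to consist of carefully reconciling the combinatorics of this map with the $3:1$ map of Proposition~\ref{prop:CTThm2.2}, rather than any hard estimate.
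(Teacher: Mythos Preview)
Your hands-on approach differs from the paper's, which neither unwinds the torsor description nor appeals to Proposition~\ref{prop:CTThm2.2}. Instead the paper invokes the machinery of \cite{LS24}: Lemma~2.13 there identifies the fibre of $BA_4 \to B(\Z/3\Z)$ over $\psi$ with $BN_\psi$; since one counts quartic fields rather than Galois closures one must pass to $BS_4$ via \cite{LS24}*{Lem.~2.15}; and the groupoid-cardinality transfer is handled by \cite{LS24}*{Lem.~2.14(1)} applied to the normal inclusion $BN_\psi \subset BS_{4,\psi} = BS_4$, which produces the factor $1/|(S_4/N)_\psi(\Q)|$. Here $(S_4/N)_\psi$ is $S_3$ twisted by the conjugation action of $C_3$ via $\psi$; the fixed points are the identity and the two $3$-cycles, so $|(S_4/N)_\psi(\Q)| = 3$. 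This packages all the orbit bookkeeping into a single group-theoretic computation.

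Your route can be made to work, but two steps need repair. First, the automorphism group of an $N_\psi$-torsor is $N_\psi(\Q)$ (since $N_\psi$ is abelian), not the centraliser of the image; the conclusion of triviality is nonetheless correct because $C_3$ permutes the non-identity elements of $N$ freely, so $N_\psi(\Q)=0$. Second, your justification of the $3:1$ factor is muddled: \emph{every} inner automorphism of $A_4$ induces the identity on the abelian quotient $A_4/N$, so ``inner automorphisms fixing the quotient'' gives all of $\Inn(A_4)\cong A_4$, not $N$. The honest count is that an $S_4$-conjugacy class of surjections (which determines the quartic) has $24$ elements, of which $12$ lie over $\psi$ and $12$ over $\bar\psi$; the $12$ over $\psi$ form three $N$-orbits, and these are the three points of $BN_\psi[k]\setminus\Omega$ above a given $L$. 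If you want instead to route through Proposition~\ref{prop:CTThm2.2}, you must supply the identification $BN_\psi[\Q]=H^1(\Q,N_\psi)\cong\ker\bigl(N_{F_\psi/\Q}\colon F_\psi^\times/F_\psi^{\times 2}\to\Q^\times/\Q^{\times 2}\bigr)$, obtained from the exact sequence $1\to N_\psi\to\Res^{F_\psi}_\Q\mu_2\to\mu_2\to 1$ and Shapiro's lemma, which you have not done.
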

\begin{proof}
By \cite{LS24}*{Lem.~2.13}, the fibre of $BA_4 \to B(\Z/3\Z)$ over $\psi$ is $BN_\psi$. Our count is in terms of quartic fields rather than Galois fields; this requires a transition to $BS_4$ (see \cite{LS24}*{Lem.~2.15}). To keep track of the correct groupoid cardinalities one applies \cite{LS24}*{Lem.~2.14(1)} to $BN_\psi \subset BS_{4,\psi} = BS_{4}$. As $N \subset S_4$ is normal this picks up a factor of $1/|(S_{4}/N)_{\psi}(\Q)|$, where $(S_{4}/N)_{\psi}$ denotes the twist of $S_4/N$ via the conjugacy action induced by $\psi$. This can be identified with $S_3$, where $C_3$ acts via conjugacy and fixes exactly the identity element and the $3$-cycles. Hence $|(S_{4}/N)_{\psi}(\Q)| = 3$, as required.
\end{proof}

Therefore Lemma \ref{lem:groupoid_cardinality} and \cite{LS24}*{Conj.~9.1} predict that the quantity in Theorem~\ref{thm:fixed_cubic} is asymptotic to $$(1/3) \cdot c(\Q,N_\psi,H_\psi) X^{1/2}$$ where
\[
c(\Q,N_\psi,H_\psi) = |\Brun BN_\psi / \Br \Q| \cdot \tau_{H_\psi}( (\prod_v BN_\psi( (\Q_v) ) )^{\Br}).
\]
Here $\Brun BN_\psi$ denotes the unramified Brauer group of $B N_\psi$ \cite{LS24}*{Def.~5.10} and $\tau_{H_\psi}$ the Tamagawa measure associated $H$ \cite{LS24}*{\S8.4}. We first verify that the corresponding Brauer group is constant.

\begin{lemma} \label{lem:Br_constant}
    $\Brun BN_\psi = \Br \Q$.
\end{lemma}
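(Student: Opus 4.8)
The plan is to compute the unramified Brauer group $\Brun BN_\psi$ directly from its definition as a subgroup of the (Galois) cohomological Brauer group of the classifying stack, and to show that every nontrivial class fails to be unramified at some place, so that only the constant classes from $\Br\Q$ survive. Concretely, I would first recall from \cite{LS24} that $\Br BN_\psi$ is computed by group cohomology of $N_\psi$ with the Galois action, via the Hochschild--Serre type spectral sequence, and that the relevant piece controlling the non-constant part is $H^1(k, \Hom(N_\psi, \Q/\Z)) $ together with $H^2$ contributions; since $N_\psi$ has exponent $2$, we have $\Hom(N_\psi,\Q/\Z) = \Hom(N_\psi, \tfrac12\Z/\Z) \cong N_\psi^\vee$ as Galois modules, and $N_\psi^\vee \cong N_\psi$ since the pairing on $(\Z/2\Z)^2$ is Galois-equivariant for the permutation action. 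So the first step is to pin down which cohomology group the potential non-constant Brauer classes live in.

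Next I would compute the relevant Galois cohomology of $N_\psi$. The Galois action factors through $\Gal(F_\psi/\Q) \cong \Z/3\Z$ acting by cyclically permuting the three nontrivial elements of $N = (\Z/2\Z)^2$; this is exactly the action making $N_\psi \cong \mathbf{F}_4^+$ as an $\mathbf{F}_2[\Z/3\Z]$-module, i.e. the augmentation-ideal/norm-one representation. A direct computation (or a restriction--corestriction argument using that $3$ is odd while $N_\psi$ is $2$-torsion, forcing $H^i(\Z/3\Z, N_\psi) = 0$ for $i>0$) shows $H^1(\Z/3\Z, N_\psi) = 0$, and then inflation--restriction reduces $H^1(k, N_\psi)$ to $H^1(F_\psi, N_\psi)^{\Z/3\Z}$ which is built from $H^1(F_\psi, \Z/2\Z) = F_\psi^\times/F_\psi^{\times 2}$. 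The point is that after taking invariants and accounting for the norm condition, the ``geometric'' (unramified) part of the Brauer group cuts down to nothing beyond constants. I would then invoke the purity/unramifiedness criterion from \cite{LS24}*{Def.~5.10}: a class is unramified precisely when its residues at all relevant valuations vanish, and I would check that any candidate nonconstant class has a nonzero residue --- this is where one uses that $N_\psi$ is ramified as a group scheme exactly at the primes dividing $\Delta_F$, producing nonvanishing residues there.

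I expect the main obstacle to be bookkeeping the twist carefully: one must be sure that the Galois action on $N_\psi^\vee$ really is the same permutation action (so that the self-duality is Galois-equivariant), and one must correctly identify the ``unramified'' condition of \cite{LS24} in this stacky setting --- in particular that it is genuinely the everywhere-unramified condition and not merely unramified away from a bad set. A clean way to finish, avoiding delicate residue computations, is to argue by comparison: the unramified Brauer group of $BN_\psi$ injects into that of $BN$ over $F_\psi$ (by restriction along $F_\psi/\Q$, which is injective on the relevant prime-to-$3$ part since $[F_\psi:\Q]=3$ is coprime to $2$), and for the constant group scheme $N = (\Z/2\Z)^2$ over a field one has $\Brun BN = \Br F_\psi$ by a standard computation (e.g. the formula of \cite{LS24} for abelian $N$, where the relevant obstruction group vanishes because the cup-product pairing $H^1 \times H^1 \to H^2$ contributions are killed by the $-1 \in \Q$ hypothesis already noted, i.e. $N_\psi(-1) = N_\psi$). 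Combining injectivity with the known answer over $F_\psi$ and taking $\Z/3\Z$-invariants forces $\Brun BN_\psi = \Br\Q$.

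\begin{proof}[Proof sketch of Lemma~\ref{lem:Br_constant}]
By \cite{LS24}*{\S5}, the non-constant part of $\Brun BN_\psi$ is governed by Galois cohomology of the module $N_\psi^\vee = \Hom(N_\psi,\Q/\Z)$. Since $N_\psi$ is $2$-torsion, $N_\psi^\vee$ is the $\mathbf{F}_2[\Z/3\Z]$-module with $\Z/3\Z = \Gal(F_\psi/\Q)$ cyclically permuting the three nontrivial characters, so $N_\psi^\vee \cong N_\psi$ as Galois modules. Because $3$ is coprime to $2$, restriction--corestriction gives $H^i(\Z/3\Z, N_\psi) = 0$ for all $i \geq 1$, so inflation--restriction identifies the relevant cohomology over $\Q$ with the $\Gal(F_\psi/\Q)$-invariants of the corresponding cohomology over $F_\psi$, and with the analysis of the height and the unramified condition from \cite{LS24}*{Def.~5.10} this reduces the claim to the statement $\Brun BN = \Br F_\psi$ for the split form $N = (\Z/2\Z)^2$ over $F_\psi$. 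For split abelian $N$ over a field the formula of \cite{LS24} for the unramified Brauer group of $BN$ shows it equals the constant part, using that $-1$ is a square-compatible sign (here $N_\psi(-1) = N_\psi$, as noted above) so that the skew-symmetric cup-product contribution vanishes. Taking $\Gal(F_\psi/\Q)$-invariants and using $H^1(\Z/3\Z,-) = 0$ on $2$-torsion modules yields $\Brun BN_\psi = \Br\Q$.
\end{proof}
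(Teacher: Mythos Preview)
Your overall strategy—reduce to the split form over $F_\psi$ via restriction--corestriction using that $[F_\psi:\Q]=3$ is coprime to $2$—is exactly the paper's. The difference is in how the computation is anchored. The paper invokes \cite{LS24}*{Lem.~10.21}, which gives the clean identification
\[
\Brun BN_\psi / \Br\Q \;\cong\; \Sha^1_\omega(\Q,N_\psi),
\]
and then the restriction--corestriction argument shows it is enough to prove $\Sha^1_\omega(F_\psi,(\Z/2\Z)^2)=0$, which is elementary (Chebotarev, or \cite{MR2392026}*{Thm.~9.1.9}). Your sketch never names $\Sha^1_\omega$ and instead tries to push the whole Brauer group through a Hochschild--Serre analysis.

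The gap is in your justification for the split case. You write that $\Brun B(\Z/2\Z)^2 = \Br F_\psi$ follows from ``the formula of \cite{LS24}'' together with ``$N_\psi(-1)=N_\psi$'' so that ``the skew-symmetric cup-product contribution vanishes.'' This is confused: the condition $N_\psi(-1)=N_\psi$ (noted in the paper before the lemma) is a statement about Tate twists relevant to the height/Tamagawa formalism, not to the constancy of the unramified Brauer group. There is no skew cup-product obstruction to kill here; what actually makes the split case work is precisely the vanishing of $\Sha^1_\omega$ for a constant $2$-torsion module, i.e.\ the fact that a nontrivial element of $(F_\psi^\times/F_\psi^{\times 2})^2$ cannot be locally trivial at almost all places. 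Once you replace your muddled final step with the $\Sha^1_\omega$ identification from \cite{LS24}*{Lem.~10.21}, your argument becomes the paper's proof.
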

\begin{proof}
    By \cite{LS24}*{Lem.~10.21} we have $\Brun BN_\psi / \Br \Q \cong \Sha^1_\omega(\Q,N_\psi)$.
    However as $N_\psi$ is $2$-torsion and restriction-corestriction is multiplication
    by $[F_\psi : \Q] = 3$, the map $H^1(\Q, N_\psi) \to H^1(F_{\psi}, N_{\psi})$ is injective.
    Thus it suffices to show that $\Sha^1_\omega(F_\psi,N_\psi) = 0$. But as the base-change of $N_\psi$
    to $F_\psi$ is simply $(\Z/2\Z)^2$ as a Galois module, this vanishing is elementary
    (it is a very special case of \cite{MR2392026}*{Thm.~9.1.9}).
\end{proof}

It thus remains to calculate the Tamagawa measure.

\begin{lemma}
    Let $p$ be an odd prime. Then
    $$\tau_{H_\psi,p}(BN_{\psi}(\Q_p)) =
    \begin{cases}
    p^{-v_p(\Delta_F)/2}, & \quad p \text{ ramified in } F, \\
    1 + 3/p, & \quad p \text{ split in } F, \\
    1, & \quad p \text{ inert in } F.
    \end{cases} $$
\end{lemma}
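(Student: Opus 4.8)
The plan is to compute the local Tamagawa measure $\tau_{H_\psi,p}(BN_\psi(\Q_p))$ for odd $p$ directly from the definition in \cite{LS24}*{\S8.4}, by parametrising the local points of $BN_\psi$ and evaluating the local height. Recall that $BN_\psi(\Q_p)$ is the groupoid of continuous homomorphisms $\varphi\colon \Gal(\bar\Q_p/\Q_p)\to N_\psi$, equivalently (since $N_\psi$ is the Galois module $(\Z/2\Z)^2$ with $\Gal$ acting through $\psi$ by permuting the three nontrivial elements) pairs consisting of a local algebra and a quadratic character compatible with the cubic resolvent. The local measure is a weighted point count: $\tau_{H_\psi,p}(BN_\psi(\Q_p)) = \frac{1}{|N_\psi(\Q_p)|}\sum_{\varphi} p^{-v_p(H_\psi(\varphi))/\ldots}$ where the discriminant-height exponent at $\varphi$ is the $p$-adic valuation of the local discriminant of the corresponding quartic étale algebra, normalised appropriately; here $|N_\psi(\Q_p)| = |H^0(\Q_p,N_\psi)|$ depends on the splitting type of $p$ in $F$. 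So first I would write down, for each of the three splitting behaviours of $p$ in $F$ (ramified, split, inert), the list of local homomorphisms $\varphi$ together with their discriminant valuations, then sum.

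The key computation is an orbit count of the local Galois cohomology. When $p$ is inert in $F$, the decomposition group at $p$ acts on $N_\psi$ through a generator of $\Z/3\Z$ cyclically permuting the three nontrivial elements, so $H^0(\Q_p,N_\psi) = N_\psi^{\Gal} = \{0\}$ has order $1$, and by local Tate duality / the twisted version of the formula the relevant weighted sum over $H^1$ collapses: one checks the only contributions are unramified and give exactly $1$. When $p$ splits completely in $F$, $N_\psi$ is the constant group $(\Z/2\Z)^2$ locally, $|H^0| = 4$, and $H^1(\Q_p,(\Z/2\Z)^2)$ has order $16$ (for odd $p$, $H^1(\Q_p,\Z/2\Z)$ has order $4$); the unramified classes contribute height $1$ and the ramified ones contribute $p^{-1}$, and counting orbits/contributions yields $\frac14(4 + 3\cdot 4\cdot p^{-1}\cdot\ldots)$ which should simplify to $1 + 3/p$ — I would match the combinatorics of which of the $16$ classes has conductor exponent $0$ versus $1$ against this. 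When $p$ ramifies in $F$, there is a single prime above $p$ with $e=3$; here the arithmetic of the twist forces the discriminant to pick up the factor $p^{-v_p(\Delta_F)/2}$, essentially because the local contribution is rigidified by the ramification of $F$ itself and there is (up to the relevant equivalence) a unique local point whose height exponent is $v_p(\Delta_F)$, consistent with the relation $\Delta_L = \cN(\fd(K/F))\Delta_F$ from Proposition~\ref{prop:CTThm2.2}.

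Concretely I would organise the proof as: (1) recall the description of $BN_\psi(\Q_p)$ and the local height $H_{\psi,p}$ from \cite{LS24}, identifying $v_p(H_{\psi,p}(\varphi))$ with the $p$-part of the relative discriminant $\cN(\fd(K/F))$ plus the fixed contribution $v_p(\Delta_F)$ via Proposition~\ref{prop:CTProp3.3}; (2) compute $|H^0(\Q_p,N_\psi)|$ in each of the three cases ($1$, $4$, $1$); (3) enumerate the $H^1$-classes and their conductor exponents using that $p$ is odd (so wild ramification does not occur and only conductor exponents $0,1$ appear — this is where Proposition~\ref{prop:CTProp3.3} gives $\fd(K/F) = \fa$ away from $2$); (4) sum the geometric contributions and simplify. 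The main obstacle I anticipate is getting the normalisation of the Tamagawa measure exactly right — in particular the precise power of $p$ by which the counting measure is scaled in \cite{LS24}*{\S8.4} (the convergence-factor/local-density normalisation for $BN_\psi$), and correctly accounting for the groupoid cardinality $1/|H^0(\Q_p,N_\psi)|$ in the split case so that the answer comes out as the clean $1+3/p$ rather than off by a factor of $4$. A useful consistency check throughout is that the product of these local factors over odd $p$, together with the class-group/unit contributions and the leftover $2$-adic and archimedean factors, must reproduce $\zeta_F^*(1) c_F/(6\Delta_F^{1/2})$ from Theorem~\ref{thm:fixed_cubic} — indeed the ramified factor $p^{-v_p(\Delta_F)/2}$ multiplied over ramified $p$ gives the $\Delta_F^{-1/2}$, and the split/inert factors match $c_F$ up to the $(1-1/p)$-type terms coming from the Tamagawa measure of the torus-like pieces.
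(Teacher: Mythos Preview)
Your plan is correct and would succeed, but it takes a different route from the paper. The paper does not enumerate $H^1(\Q_p,N_\psi)$ directly. For $p$ unramified in $F$ it invokes the mass formula \cite{LS24}*{Cor.~8.11}, which gives $\tau_{H_\psi,p}(BN_\psi(\Q_p)) = 1 + (\#N_\psi^{\Frob_p}-1)/p$ in one line; the split and inert answers then drop out from $\#N_\psi^{\Frob_p}=4$ and $1$ respectively. For $p$ ramified in $F$ the paper argues group-theoretically that any lift of $\psi_p$ to $A_4$ must land in a copy of $\Z/3\Z$ (since the only subgroups of $A_4$ containing a $3$-cycle are $\Z/3\Z$ and $A_4$, and the latter is ruled out by tame ramification for $p>3$ and an LMFDB check for $p=3$), hence there is a unique $\varphi_p$ with trivial automorphism group and height $p^{v_p(\Delta_F)}$.

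Your direct approach is more uniform and, in the ramified case, arguably cleaner: once you observe that $N_\psi$ is self-dual and that $H^0(\Q_p,N_\psi)=0$ whenever $\psi_p$ is non-trivial, local duality gives $|H^1(\Q_p,N_\psi)|=|H^0|\cdot|H^2|=1$ immediately in both the inert and ramified cases, with no need for the subgroup analysis or the LMFDB lookup. In the split case your combinatorics is right: of the $16$ classes in $H^1(\Q_p,(\Z/2\Z)^2)$, the $4$ unramified ones have discriminant $1$ and the remaining $12$ all have discriminant $p^2$ (since exactly two of the three non-trivial characters $\chi_1,\chi_2,\chi_1\chi_2$ are ramified in every ramified case), giving $\tfrac{1}{4}(4 + 12p^{-1}) = 1+3/p$. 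Your worry about the normalisation is resolved by noting that for abelian $N$ every $\varphi_p$ has $|\Aut(\varphi_p)|=|N_\psi(\Q_p)|=|H^0|$, so the groupoid sum is exactly $|H^0|^{-1}\sum_{\varphi\in H^1}H_p(\varphi)^{-1/2}$.
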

\begin{proof}
    By \cite{LS24}*{Def.~8.7} we have
    \begin{equation} \label{def:Tamagawa}
    \tau_{H_\psi,p}(BN_{\psi}(\Q_p)) = 
    \sum_{\varphi_p \in BN_{\psi}[\Q_p]} \frac{1}{|\Aut(\varphi_p)| H_p(\varphi_p)^{1/2}}.
    \end{equation}
    If $p$ is unramified in $F$ then we use the mass formula \cite{LS24}*{Cor.~8.11}, which gives
    $$\tau_{H_\psi,p}(BN_{\psi}(\Q_p)) = 1 + \frac{\# N_\psi^{\Frob_p} -1}{p} $$
    where the numerator denotes the invariants of $N_\psi$ under the action of the frobenius
    element $\Frob_p \in \Gal(F/\Q)$. This gives the stated formula.
    
    If $p$ is ramified in $F$ then we cannot apply the mass 
    formula directly since this requires $N_{\psi}$ to have good reduction at $p$. 
    Instead, 
    we note that the $\varphi_p$ being counted correspond to continuous
    homomorphisms $\Gal(\bar{\Q}_p/\Q_p) \to A_4$ such that the composition
    with $A_3 \to \Z/3\Z$ equals $\psi_p$. However since $\psi_p$ is ramified
    there is only one such homomorphism up to conjugation, namely the composition
    $\Gal(\bar{\Q}_p/\Q_p) \xrightarrow{\psi_p} \Z/3\Z \to A_4$.
    Indeed, the inertia group of $F_\psi$ at $p$ is $\Z/3\Z$.
    But the only subgroups of $A_4$ which contain $\Z/3\Z$ are $\Z/3\Z$ and $A_4$
    itself.  However $\mathrm{Im}\, \varphi_p \neq A_4$: for $p=3$ this follows from the LMFDB,
    and for $p > 3$ the ramification is tame, so
    the Galois group must be an extension of two cyclic groups. We deduce
    that $\mathrm{Im}\, \varphi_p = \Z/3\Z$, as required. 
    
    We conclude that only the identity element contributes towards \eqref{def:Tamagawa}.
    This has trivial automorphism group since automorphisms come from conjugation
    by $N_\psi(\Q_p)$, which is trivial. Therefore we obtain the measure 
    $p^{-v_p(\Delta_F)/2}$.
\end{proof}

The convergence factors come from the Artin $\L$-function $\L(\CC[N_\psi],s)/\zeta(s) = \zeta_{F_\psi}(s)$, which reads
$$\zeta_{F_\psi}(s) = \prod_{p \text{ ramified in }F}\left(1 - \frac{1}{p^s}\right)^{-1}\prod_{p \text{ split in } F}\left(1 - \frac{1}{p^s}\right)^{-3} \prod_{p \text{ inert in } F}\left(1 - \frac{1}{p^{3s}}\right)^{-1}.$$
We deduce that the Tamagawa measure equals
$$\frac{\zeta^*_{F_\psi}(1)}{\sqrt{|\Delta_F|}} \prod_{p \text{ ramified in }F}\left(1 - \frac{1}{p}\right)\prod_{p \text{ split in } F}\left(1 - \frac{1}{p}\right)^3\left(1 + \frac{3}{p}\right) \prod_{p \text{ inert in } F}\left(1 - \frac{1}{p^3}\right)$$
up to $2$-adic and archimedean factors.
Recalling Lemma \ref{lem:groupoid_cardinality}, we obtain an expression in agreement with Theorem \ref{thm:fixed_cubic} up to $2$-adic and archimedean factors, hence in agreement with \cite{LS24}*{Conj.~9.1}.

\bibliography{refs}

\end{document}